\theoremstyle{thmstyleone}%
\newtheorem{theorem}{Theorem}
\theoremstyle{thmstyletwo}%
\newtheorem{example}{Example}%
\newtheorem{remark}{Remark}%
\theoremstyle{thmstylethree}%
\newtheorem{definition}{Definition}%
\begin{document}

\title[Article Title]{Transcendental meromorphic solutions and the complex Schr\"{o}dinger equation with delay}


\newtheorem{corollary}[theorem]{Corollary}
\newtheorem{lemma}[theorem]{Lemma}
\newtheorem{question}[theorem]{Question}
\newtheorem{acknowledgements}[theorem]{Acknowledgements}

\author[1]{\fnm{Tingbin} \sur{Cao}}\email{tbcao@ncu.edu.cn}

\author[2]{\fnm{Risto} \sur{Korhonen}}\email{risto.korhonen@uef.fi} 

\author*[2]{\fnm{Wenlong} \sur{Liu}}\email{wenlong.liu@uef.fi}

\affil[1]{\orgdiv{Department of Mathematics}, \orgname{Nanchang University}, \orgaddress{\street{}\city{Nanchang} \postcode{330031}, \country{P. R.  China}}}

\affil*[2]{\orgdiv{Department of Physics and Mathematics}, \orgname{University of Eastern Finland}, \orgaddress{\street{} \city{P.O.Box 111}, \postcode{FI-80101 Joensuu}, \state{} \country{Finland}}}


\abstract{In this article, we focus on studying the differential-difference equation
\begin{equation}\label{E1}
f'(z) = a(z)f(z+1) + R(z, f(z)), \quad R(z, f(z)) = \frac{P(z, f(z))}{Q(z, f(z))},    
\end{equation}
where the two nonzero polynomials \( P(z, f(z)) \) and \( Q(z, f(z)) \) in \( f(z) \), with small meromorphic coefficients, are coprime, and \( a(z) \) is a nonzero small meromorphic function of \( f(z) \).  This equation includes the complex Schrödinger equation with delay as a special case.

If \( f(z) \) is a transcendental meromorphic solution of \eqref{E1} with subnormal growth, then we derive all possible forms of \eqref{E1}. Additionally, under the above assumptions, we classify these specific forms based on the degrees of \( P(z, f(z)) \) and \( Q(z, f(z)) \) to establish necessary conditions for the existence of transcendental meromorphic solutions of \eqref{E1}. In particular, when \( \deg_{f}(P) - \deg_{f}(Q) = 2 \), we demonstrate that \eqref{E1} reduces to a Riccati differential equation. Finally, examples are provided to support our results.}

\keywords{Delay differential equation, Exponential polynomial, Riccati differential equation, Small function}



\maketitle

\section{Introduction}\label{sec1}
Time delay is a common phenomenon occurring in many engineering applications. In control theory, the process of sampled-data control is a typical example, where delay occurs during transmission from measurement to controller.

As is well-known, the Schrödinger differential equation is a fundamental equation in physics, and many researchers have conducted extensive studies on it, such as investigating solution methods for Schrödinger equation-related problems (e.g., \citep{bib2,bib3,bib4,bib5,bib12,bib13,bib20,bib35}) or studying the initial value problem for the Schrödinger equation in certain linear spaces.

Agirseven \cite{bib1} introduces the following Schrödinger differential-difference equation:
\begin{align}
\begin{cases}
i\frac{dv(t)}{dt} + Av(t) = bAv(t - w) + f(t), \quad 0 < t < \infty,\\
v(t) = \varphi(t), \quad -w \le t \le 0,
\end{cases}
\end{align}
where \( A \) is a self-adjoint positive definite operator, and \( \varphi(t) \) and \( f(t) \) are continuous functions. The author establishes a stability estimation theorem for solutions to the Schrödinger differential-difference equation by studying the initial value problem for the delay Schrödinger differential equation in Hilbert space \cite{bib1}.

According to the above Schrödinger differential-difference equation, the following delay differential equation \eqref{E2} can be referred to as the complex Schr\"{o}dinger equation with delay:
\begin{equation}\label{E2}
{f}'(z) = a(z)f(z + n) + b(z)f(z) + c(z),
\end{equation}
where \( n \in \mathbb{C} \setminus \{ 0 \}. \)

Gross and Yang \cite{bib14} have investigated the rate of growth  of functions meromorphic in the plane (\( |z| < \infty \)) that are solutions of differential equations of the form
\begin{align}\label{E111} 
p(z, f(z), f'{(z)}, \dots, f^{(k)}(z)) = f(g(z)),
\end{align}
where \( p(z) \) is a polynomial in the variables \( z, f(z), f'(z), \dots, f^{(k)}(z) \), and \( g(z) \) is a given transcendental entire function.

 Brunt, Marshall and Wake \cite{bib6} considered a generalization of the pantograph equation:
\begin{align}\label{E109}
{f}'(z) = \lambda f(g(z)) - b f(z),
\end{align}
where \( b \) and \( \lambda \) are complex constants with \( \lambda \neq 0 \), and \( g \) is an entire function with a fixed point at \( z = z_0 \). Clearly, if \( g(z) = z + n \), \( n \in \mathbb{C} \setminus \{0\} \), then \eqref{E109} is a special case of \eqref{E2} with \( c(z) \equiv 0 \).

In \cite{bib30}, Li and Saleeby investigated solutions of functional-differential equations of the form
\begin{align}\label{E6}
{f}'(z) = a(z) f(g(z)) + b(z) f(z) + c(z)
\end{align}
in both real and complex variables. They found that if \( f(z) \) is a transcendental meromorphic solution of \eqref{E6} and \( g(z) \) is an entire function, then \( g(z) = A z + B \), where \( A \) is a nonzero constant and \( B \) is a constant. Clearly, if \( A = 1 \) and \( B = n \), then \eqref{E6} becomes \eqref{E2}.

This leads us to the study of transcendental meromorphic solutions of \eqref{E2} with small meromorphic coefficients, and the study of its generalized form from the perspective of Nevanlinna theory. In particular, the main purpose of this paper is to study transcendental meromorphic solutions of the following differential-difference equation:
\begin{equation} \label{E3}
{f}'(z) = a(z) f(z + n) + R(z, f(z)), \quad R(z, f(z)) = \frac{P(z, f(z))}{Q(z, f(z))},
\end{equation}
where \( a(z) (\not \equiv 0) \) is a rational function and all coefficient functions of \( R(z, f(z)) \) are small meromorphic functions with respect to \( f(z) \), and \( P(z, f(z)) \) and \( Q(z, f(z)) \) are polynomials in \( f(z) \); \( n \) is a nonzero complex number. 

When \( R(z, f(z)) \) degenerates into a polynomial of \( \deg_{f}(R) = 1 \), \eqref{E3} reduces to the complex Schr\"{o}dinger equation with delay \eqref{E2}. We observe that if \( a(z) \equiv 0 \), then \eqref{E3} can be written as
\begin{equation}
{f}'(z) = R(z, f(z)) \label{E4}.
\end{equation}
Equation \eqref{E4} includes the Riccati equation as a special case. The celebrated Malmquist theorem states that a differential equation of the form \eqref{E4}, where the right-hand side is rational in both arguments and admits a transcendental meromorphic  solution, can be reduced to a Riccati differential equation
\[
{f}'(z) = a_{0}(z) + a_{1}(z) f(z) + a_{2}(z) f(z)^2
\]
with rational coefficients. For more detailed research on equation \eqref{E4}, we refer to \cite{bib27} and \cite{bib40}. 

Throughout this paper, we assume \( a(z) \not\equiv 0 \). Without loss of generality, we set \( n = 1 \) in \eqref{E3}.

A range of results has been obtained by many researchers in the study of meromorphic  solutions of delay differential equations in the sense of Nevanlinna theory. For example, in 2017, Halburd and the second author \cite{bib16} developed an iterative method for poles and zeros to study the growth of meromorphic solutions of the following delay differential equation.
\begin{theorem}
\cite[Theorem 1.1]{bib16} $$w(z+1)-w(z-1)+a(z)\frac{{w}'(z)}{w(z)}=R(z,w(z))=\frac{P(z,w(z))}{Q(z,w(z))},$$
where $a(z)$ is rational, 
$P(z,w)$ is a polynomial in $w$ having rational coefficients in $z$, and $Q(z, w)$ is a polynomial in $w(z)$ with roots that are nonzero rational functions of $z$ and not roots of $P(z,w)$. If the hyper-order of $w(z)$ is less than one, then $$\deg_{w}(P)=\deg_{w}(Q)+1\le 3\quad   \text{or}\quad \deg_{w}(R)\le 1.$$
\end{theorem}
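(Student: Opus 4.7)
The plan is to combine the Valiron–Mohon'ko identity with the difference analogue of the logarithmic derivative lemma (which applies because the hyper-order of $w$ is less than one), and then to refine the resulting crude bound through a local pole–tracking argument. First I would estimate both sides of the equation in characteristic functions. On the right, Valiron–Mohon'ko gives $T(r,R(z,w))=\deg_w(R)\,T(r,w)+S(r,w)$, where $\deg_w(R)=\max\{\deg_w P,\deg_w Q\}$. On the left, the difference logarithmic derivative lemma yields $m(r,w(z\pm 1)/w)=S(r,w)$, hence $T(r,w(z\pm 1))=T(r,w)+S(r,w)$, while the standard logarithmic derivative lemma handles $m(r,w'/w)=S(r,w)$. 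Combining these with a shifted Nevanlinna inequality for counting functions already forces an a priori bound of the form $\deg_w(R)\,T(r,w)\le 2T(r,w)+\bar N(r,w)+\bar N(r,1/w)+S(r,w)$, which is too weak by itself.

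The second step is a local pole-order comparison at a generic pole $z_0$ of $w$ of order $m$, chosen so that $z_0\pm 1$ are not poles of $w$ and no coefficient singularity interferes. At such $z_0$, the term $a(z)w'(z)/w(z)$ contributes a simple pole, while $w(z\pm 1)$ are regular, so the left-hand side has a pole of order exactly $1$. On the right, if $\deg_w(P)>\deg_w(Q)$ then $R(z,w(z))$ has pole order $(\deg_w P-\deg_w Q)m$, forcing $\deg_w P=\deg_w Q+1$ and $m=1$; if $\deg_w(P)\le \deg_w(Q)$ then the right-hand side is bounded at $z_0$, producing an immediate contradiction unless $w$ has essentially no generic poles. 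A symmetric analysis at generic zeros of $w$, and at generic preimages of the rational roots $q_j(z)$ of $Q(z,w)=0$, produces the analogous numerical relations in the other direction.

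The decisive step is the iterative pole-and-zero method of Halburd–Korhonen. Once one knows that each generic simple pole of $w$ at $z_0$ propagates through the equation to force a pole of $w$ at one of the shifted points $z_0\pm 1$ (this is read off by rewriting the equation as $w(z+1)=w(z-1)-a(z)w'(z)/w(z)+R(z,w(z))$ and examining which terms can cancel the forced singularity on the right), one inductively builds a tree of poles. Counting these in $|z|<r$ produces a lower bound on $\bar n(r,w)$ which, when inserted back into the crude characteristic inequality above, must be compatible with $T(r,w)$ having hyper-order strictly less than one. The tree-growth estimate tightens the earlier bound from $\deg_w(R)\le 4$ down to $\deg_w(P)=\deg_w(Q)+1\le 3$. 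In the complementary case $\deg_w(P)\le \deg_w(Q)$, the parallel iteration on preimages of the $q_j$ cuts $\deg_w(Q)$ down to $1$, yielding $\deg_w(R)\le 1$.

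The main obstacle I expect is the iterative step: controlling precisely how many new poles are created at each shift, and ruling out cancellations from the coefficient $a(z)$, from $P$ or from multiplicities that are not ``generic.'' This requires careful bookkeeping of exceptional points (poles and zeros of $a$, of the coefficients of $P$ and $Q$, and of the rational roots of $Q$), and showing that such exceptional points contribute only $S(r,w)$ to all relevant counting functions, so that the tree-growth argument produces a genuine obstruction to the hyper-order bound rather than being absorbed into small-function error terms.
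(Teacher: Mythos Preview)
This theorem is quoted in the paper as \cite[Theorem 1.1]{bib16} (Halburd--Korhonen, 2017) and is \emph{not} proved in the present paper; it is stated only as background motivating the study of equation \eqref{E3}. There is therefore no ``paper's own proof'' to compare against.

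That said, your outline is essentially the strategy of the original Halburd--Korhonen paper: a crude Valiron--Mohon'ko bound combined with the difference logarithmic derivative lemma, followed by the iterative pole-and-zero counting method to sharpen the degree restriction. Two points deserve care. First, the local pole analysis at a generic pole $z_0$ of $w$ does not immediately give the dichotomy you state: when $\deg_w P\le\deg_w Q$ the right-hand side need not be bounded at $z_0$, since $Q(z_0,w(z_0))$ may vanish if $w(z_0)$ coincides with a root of $Q$; the correct local analysis in that regime must be done at generic preimages of the rational roots $q_j$ of $Q$, not at poles of $w$. Second, the passage ``the tree-growth estimate tightens the bound from $\deg_w(R)\le 4$ down to $\le 3$'' is the crux and is where the real work lies: one must show that if $\deg_w Q\ge 3$ (in the case $\deg_w P=\deg_w Q+1$) or $\deg_w Q\ge 2$ (in the case $\deg_w P\le\deg_w Q$), the iteration forces $n(r,w)$ or $n(r,1/(w-q_j))$ to grow so fast that the hyper-order of $w$ is at least one. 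This step requires a careful comparison inequality of the form $n(r,1/(w-q_j))\le \tau\, n(r+s,w)+O(1)$ with $\tau<1$, feeding into a growth lemma of Halburd--Korhonen type; your sketch correctly anticipates that the exceptional-point bookkeeping is the main technical hurdle, but does not yet indicate how the constant $\tau$ is obtained or why it is strictly less than one in the relevant degree ranges.
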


In 2022, Lü \cite{bib34} generalized \cite[Theorem 2.4]{bib30} and studied meromorphic  solution of \eqref{E9} by using Nevanlinna theory.

\begin{theorem}
\cite[Theorem 2]{bib34}
Suppose $f(z)$ is a transcendental meromorphic function in $\mathbb{C} $ and $g$ is a non-constant entire function satisfying the equation 
\begin{align}\label{E9}
f^{(k)}(z)=a(z)(f^{n}\circ g)(z)+b(z)f(z)+c(z),
\end{align}
where $a(\not=0),$ $b,$ $c$ are polynomials in $\mathbb{C}.$
Then $g$ must be linear. Further, \eqref{E9} does not admit any transcendental meromorphic solution if $n\ge2$  
and $\frac{k}{n-1}$ is not an integer.

\end{theorem}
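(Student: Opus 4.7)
The plan is to split the argument in two: first show that $g$ is necessarily linear, then pin down the integrality of $k/(n-1)$ by a multiplicity iteration at poles of $f$.

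For the first part, I would apply the Nevanlinna characteristic to \eqref{E9}. Using $T(r, f^{(k)}) \le (k+1) T(r, f) + S(r, f)$ from the logarithmic derivative lemma, together with the fact that $a, b, c$ are polynomials and hence contribute only $O(\log r)$, one obtains
\[
n\, T(r, f \circ g) \;\le\; (k+1)\, T(r, f) + O(\log r) + S(r, f).
\]
If $g$ were transcendental entire, the classical composition estimate $T(r, f \circ g) \ge T(M(r/2, g)/8,\, f)$ (valid outside a set of finite logarithmic measure) makes the left side grow far faster than $T(r, f)$, a contradiction; so $g$ is a polynomial of some degree $d$. By Valiron's relation $T(r, f \circ g) = (1+o(1))\, T(r^d, f)$, the right-hand term $a(z)(f\circ g)^n$ has order $d\,\rho(f)$, while $f^{(k)}$ has order $\rho(f)$. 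For $\rho(f) > 0$ this forces $d = 1$; the $\rho(f)=0$ case is handled by iterating $r \mapsto r^d$ in the display, which would otherwise compress $T(r,f)$ enough to make $f$ rational. Hence $g(z) = Az + B$.

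For the second part, assume first that $f$ has infinitely many poles. Pick a pole $z_0$ of $f$ of multiplicity $m \ge 1$ whose forward orbit under $z \mapsto Az + B$ avoids the finitely many zeros of $a$ and $b$; this is possible because the exceptional set is finite while $f$ has infinitely many poles. In \eqref{E9} with $g$ linear, the LHS $f^{(k)}$ has a pole of order $m+k$ at $z_0$, whereas $b(z)f(z) + c(z)$ has pole order exactly $m$, so $a(z) f(Az + B)^n$ must absorb the discrepancy. This forces $z_1 := Az_0 + B$ to be a pole of $f$ of multiplicity $m_1$ satisfying $n m_1 = m + k$, i.e., $m_1 = (m+k)/n$. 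Iterating along $z_{j+1} = A z_j + B$ gives
\[
m_j \;=\; \frac{k}{n-1} + \frac{m - k/(n-1)}{n^j} \;\xrightarrow[j\to\infty]{}\; \frac{k}{n-1}.
\]
Since $(m_j)$ is a strictly monotone sequence of positive integers with a finite limit whenever $m \ne k/(n-1)$, no such infinite sequence can exist; therefore $m = k/(n-1)$, forcing $k/(n-1) \in \mathbb{Z}_{>0}$ as claimed.

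The main obstacles I foresee are two. In phase~I, the $\rho(f)=0$ case is delicate and requires a substitute order argument (e.g., via logarithmic order, or by direct iteration of the characteristic inequality). In phase~II, the orbit $\{z_j\}$ must avoid both the zero-set of $a,b$ and any finite accumulation point: when $|A|<1$ the orbit converges to the fixed point $B/(1-A)$, producing an accumulation of poles and contradicting meromorphy on $\mathbb{C}$, so $|A|\ge 1$ is automatic; but the transcendental entire case of $f$ (no pole available) must be dispatched separately, either by running a parallel zero-multiplicity argument or by combining Hadamard factorization with the growth bound from phase~I.
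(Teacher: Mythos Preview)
The statement you are attempting to prove is not established in this paper at all: it is quoted verbatim from L\"u's article \cite{bib34} as background in the introduction, and the present paper gives no proof of it. Consequently there is no ``paper's own proof'' to compare your attempt against.

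That said, your outline follows the standard strategy one would expect for this kind of result. The growth comparison in phase~I (composition estimates to rule out transcendental $g$, then Valiron's relation to force $\deg g = 1$) and the pole-multiplicity recursion $m_{j+1} = (m_j + k)/n$ with fixed point $k/(n-1)$ in phase~II are both natural and essentially correct when they apply. You have also correctly flagged the genuine soft spots: the $\rho(f)=0$ subcase in phase~I needs a separate argument, and in phase~II the entire case (no poles to iterate on) is not handled by your sketch. Your suggested ``parallel zero-multiplicity argument'' does not work symmetrically, since at a zero of $f$ the terms $f^{(k)}$, $bf$, and $c$ do not interact as cleanly as at a pole; L\"u's original paper should be consulted for how that case is actually dispatched. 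One further small point: when $|A|=1$ with $A\neq 1$, the orbit $z_{j+1}=Az_j+B$ is bounded but need not be periodic, so you still get an accumulation of poles and the same contradiction---your remark that ``$|A|\ge 1$ is automatic'' understates this slightly, but the meromorphy obstruction covers it.
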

In recent years, Xu and the fist author \cite{bib39}, the first author, Chen and the second author \cite{bib7}, Laine and Latreuch \cite{bib28}, the first author  and Chen \cite{bib8},
Zhang and Huang \cite{bib41}, Liu  and Song  \cite{bib31}, Hu and Liu \cite{bib24}, Wang, Han and Hu \cite{bib37} have also used the   Nevanlinna theory of meromorphic functions to study the meromorphic solutions of delay differential equations in the complex plane $\mathbb{C}$.

\section{Main Results}\label{sec2}
\begin{theorem}\label{T1}
Suppose $f(z)$ is a transcendental  meromorphic solution of 
\begin{equation}\label{E5}
{f}'(z)=a(z)f(z+1)+R(z,f(z)),\quad R(z,f(z))=\frac{P(z,f(z))}{Q(z,f(z))},    
\end{equation}
where $R(z,f(z))$ is a nonzero irreducible rational function in $f(z)$ with rational coefficients, $a(z)$ is a nonzero rational function.
If $f(z)$ is subnormal, then 
$$\deg_{f}(Q)=0, \deg_{f}(P)=0,1,2 \quad\text{or}\quad \deg_{f}(Q)=1, \deg_{f}(P)=3.$$
\end{theorem}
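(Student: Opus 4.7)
My plan is to combine a global Nevanlinna characteristic estimate with a local pole-multiplicity balance, in the Halburd--Korhonen style of analysis for delay differential equations under the subnormal growth hypothesis.

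Let $p = \deg_{f} P$ and $q = \deg_{f} Q$. Since $P$ and $Q$ are coprime polynomials in $f$ with rational coefficients, the Valiron--Mohon'ko theorem gives $T(r, R(z, f)) = \max(p, q)\, T(r, f) + S(r, f)$. Writing \eqref{E5} as $R(z, f(z)) = f'(z) - a(z) f(z+1)$ and applying the standard inequality $T(r, f') \leq T(r, f) + \overline{N}(r, f) + S(r, f)$ together with the Halburd--Korhonen shift estimate $T(r, f(z+1)) = T(r, f) + S(r, f)$ (valid because $f$ is subnormal) yields
\[
\max(p, q)\, T(r, f) \;\leq\; 2\, T(r, f) + \overline{N}(r, f) + S(r, f).
\]
Thus $\max(p, q) \leq 3$ in general, and the bound tightens to $\max(p, q) \leq 2$ as soon as $\overline{N}(r, f) = S(r, f)$.

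Next I carry out a local pole balance. Take a pole $z_{0}$ of $f$ of multiplicity $m$ outside the finite exceptional set formed by the poles and zeros of $a$ and of the coefficients of $P$ and $Q$, together with their $\pm 1$ translates. At $z_{0}$, $f'$ has a pole of order $m+1$, $a(z) f(z+1)$ is finite, and $R(z, f(z))$ has a pole of order $(p-q)m$ when $p > q$, is finite when $p = q$, and has a zero of order $(q-p)m$ when $p < q$. Matching orders in \eqref{E5} rules out $p \leq q$ and forces $m + 1 = (p - q) m$, whose only positive-integer solution is $m = 1$ and $p = q + 2$. Hence either $\overline{N}(r, f) \neq S(r, f)$, in which case $p = q + 2$ and every non-exceptional pole of $f$ is simple, or $\overline{N}(r, f) = S(r, f)$. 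In the first branch $\max(p, q) = q + 2 \leq 3$ leaves only $(q, p) \in \{(0, 2), (1, 3)\}$. In the second branch the sharpened global bound gives $\max(p, q) \leq 2$, which for $q = 0$ produces $p \in \{0, 1, 2\}$ at once. For $q \geq 1$ a secondary local analysis at the zeros of $Q(z, f(z))$ shows that each generic $\beta_{i}$-point of $f$ (with $\beta_{i}$ a root of $Q(z, \cdot)$) propagates under the equation to a pole of $f$ at its $+1$ translate, so $\overline{N}(r, f) = S(r, f)$ forces $\overline{N}(r, 1/(f - \beta_{i})) = S(r, f)$ for every $i$. Applying Nevanlinna's second main theorem to the small meromorphic targets $\beta_{1}, \ldots, \beta_{q}, \infty$ then yields $(q - 1)\, T(r, f) \leq S(r, f)$, which excludes $q \geq 2$. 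The remaining subcase $q = 1$ with $p \in \{0, 1, 2\}$ is handled via the representation $f = \beta + h\, e^{g}$ (forced by $\infty$ and the unique root $\beta$ both being essentially Picard-exceptional), combined with a Borel--Steinmetz-type linear independence result applied to the exponentials $e^{g}, e^{g(z+1)}, e^{-g}$ (and their powers when $p \geq 1$) arising after substituting this ansatz into \eqref{E5}.

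The principal obstacle I anticipate is this last step, the exclusion of $(q, p) = (1, 0), (1, 1), (1, 2)$ in the deficient branch: it requires the exponential ansatz together with a careful invocation of a Borel-type lemma in the subnormal setting, together with the verification that the relevant exponentials are genuinely linearly independent over the field of small meromorphic functions. The earlier steps are routine in the style of \cite{bib16}, requiring essentially only careful bookkeeping of the exceptional set associated to the shift-type estimates.
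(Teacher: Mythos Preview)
Your local pole-balance step contains a genuine gap. You assert that at a generic pole $z_0$ of $f$, the term $a(z)f(z+1)$ is finite, and then match $m+1=(p-q)m$. But nothing prevents $z_0+1$ from also being a pole of $f$: for instance $f(z)=\tan(\pi z)$ is a subnormal solution of an equation of type \eqref{E5} with $p-q=2$, yet \emph{every} pole $z_0$ of $f$ has $f(z_0+1)=\infty$. Consequently your single-step balance does not force $m+1=(p-q)m$, and the dichotomy ``either $\overline N(r,f)=S(r,f)$ or $p=q+2$'' is not established. The cases $p-q\in\{-1,0,1\}$ are particularly problematic: there the pole at $z_0$ propagates to a pole of order $m+1$ at $z_0+1$, then $m+2$ at $z_0+2$, and so on; this linear growth of multiplicities is perfectly compatible with subnormal growth, so no contradiction arises and you cannot conclude $\overline N(r,f)=S(r,f)$. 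Your proposed Borel--Steinmetz ansatz for the $(q,p)\in\{(1,0),(1,1),(1,2)\}$ subcases then rests on a hypothesis ($\infty$ and $\beta$ both deficient) that you have not secured.

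The paper avoids this difficulty by never attempting a pole balance at poles of $f$. Instead it (i) bounds $\deg_f Q\le 1$ by combining a Mohon'ko-type lemma, which gives $N(r,1/(f-b_j))=T(r,f)+S(r,f)$ for each root $b_j$ of $Q$, with the elementary estimate $N(r,1/Q(z,f))\le N(r,f)+S(r,f)$ obtained from \eqref{E5}; (ii) gets $\max(p,q)\le 3$ exactly as you do; (iii) kills $(p,q)=(3,0)$ with a Clunie-type argument (giving $m(r,f)=S(r,f)$) plus a direct pole iteration (giving $N(r,f)=S(r,f)$); and (iv) kills $(p,q)\in\{(0,1),(1,1),(2,1)\}$ by starting from a \emph{zero} of $f-b_1$ (whose existence is guaranteed by Mohon'ko), observing that it generates an infinite forward sequence of poles of $f$, and invoking the Halburd--Korhonen counting lemma to contradict subnormal growth. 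Step (iv) is the clean replacement for your exponential-ansatz route; it works uniformly regardless of whether $\overline N(r,f)$ is small, so no dichotomy is needed.
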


Under the assumptions of Theorem \ref{T1}, and based on its conclusions, there exist four possible forms for \eqref{E5}. 
If $\deg_{f}(P) - \deg_{f}(Q) = 2$, then \eqref{E5} takes the form:
\begin{align}\label{E10}
    &f'(z) = a(z) f(z+1) + R(z,f(z)),\\\nonumber 
&R(z,f(z))=b(z) f(z)^2 + c(z) f(z) + d(z),
\end{align}
or 
\begin{align}\label{E61}
    &f'(z) = a(z) f(z+1) + R(z,f(z)),\\\nonumber
&R(z,f(z))=\frac{a_0(z) f(z)^3 + a_1(z) f(z)^2 + a_2(z) f(z) + a_3(z)}{f(z) - a_4(z)},
\end{align}
where \(a(z)\), \(b(z)\), \(a_0(z)\), and \(a_3(z)\) are not identically zero. In this case, we obtain the following theorem. Additionally, in Theorem \ref{T3} we consider a more general case than Theorem \ref{T1}, where all coefficients of \(R(z,f(z)\)) are small functions of \(f(z\)).

\begin{theorem}\label{T3}
Suppose that \(f(z)\) is a transcendental meromorphic solution of \eqref{E10} or \eqref{E61} with subnormal growth, all coefficient functions of \(R(z,f(z))\) are small functions of \(f(z)\), and \(a(z)\) is also a small  function of $f(z)$. Then we have:

\begin{enumerate}
    \item[(i)] \( N_1(r, f(z)) = T(r, f(z)) + S(r, f(z)). \)

    \item[(ii)]  Equation \eqref{E10} reduces to a Riccati differential equation as follows:
    \begin{equation}\label{E396}
        f'(z) = b(z) f(z)^2 + \left[c(z) + \frac{a(z) b(z)}{b(z+1)}\right] f(z) + n(z),
    \end{equation}
    where \( T(r, n(z)) = S(r, f(z)) \).

    \item[(iii)] Equation \eqref{E61} reduces to a Riccati differential equation:
    \begin{equation}\label{E610}
        f'(z) + L_1(z) f(z) + L_2(z) f(z)^2 = L_3(z),
    \end{equation}
    where
    \begin{align*}
        L_1(z) &= \frac{a'_0(z) + 2a_0(z) a_4(z) - a_0(z) C(z)}{a_0(z)}, \\
        L_2(z) &= -a_0(z), \\
    \end{align*}
    and
  \begin{align}\label{N613}
 &T(r, L_{3}(z))=S(r,f(z)),\\\nonumber 
 &C(z)=\frac{{a_0}'(z)}{a_0(z)} + a_2(z) + 2a_1(z) a_4(z) + 3a_0(z) a_4(z)^2 + a(z) a_4(z+1) - {a_4}'(z).
 \end{align}
Alternatively, 
\begin{align*}
L_1(z) &=-\frac{a'_0(z)}{a_0(z)}+2a_4(z)-A(z)-C(z),\\\nonumber
L_2(z) &=-1,\\\nonumber
 \end{align*}
where   $A(z)=\frac{a(z)a_{0}(z)}{a_{0}(z+1)},$ $C(z)$ and $L_3(z)$ 
 satisfy \eqref{N613}.
\end{enumerate}
\end{theorem}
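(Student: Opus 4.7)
My plan is to establish the three conclusions of Theorem \ref{T3} via a pole-order balance, the leftward shift-invariance of the pole set of $f$, and standard Nevanlinna tools.

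\emph{For (i),} I would perform a local pole analysis of \eqref{E10} (respectively \eqref{E61}). At a generic pole $z_0$ of order $k$ of $f$, matching pole orders on both sides---$k+1 = 2k$ from $f'(z)$ versus $b(z)f(z)^2$ in \eqref{E10}, or $2k+1 = 3k$ from $(f-a_4)f'$ versus $a_0 f^3$ after rewriting \eqref{E61} as $(f-a_4)(f' - a f(z+1)) = a_0 f^3 + a_1 f^2 + a_2 f + a_3$---forces $k = 1$. If $k \ge 2$, the term $a(z)f(z+1)$ would have to carry a pole of order $2k$ at $z_0$, producing a geometric cascade of pole orders along $z_0, z_0+1, z_0+2,\dots$ incompatible with subnormal growth, so multiple poles contribute only $S(r,f)$. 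This yields $N(r,f) - N_1(r,f) = S(r,f)$. Dividing the equation by a suitable power of $f$ and applying the logarithmic derivative lemma together with the Halburd--Korhonen shift analogue $m(r, f(z+1)/f(z)) = S(r,f)$ (valid because $f$ is subnormal) then gives $m(r, f) = S(r, f)$, hence $N_1(r,f) = T(r,f) + S(r,f)$.

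\emph{For (ii),} I would set
\[
n(z) := f'(z) - b(z)f(z)^2 - \Bigl[c(z) + \frac{a(z)b(z)}{b(z+1)}\Bigr] f(z),
\]
so that \eqref{E10} gives $n(z) = d(z) + \frac{a(z)}{b(z+1)}\psi(z)$, where $\psi(z) := b(z+1)f(z+1) - b(z)f(z)$; the task reduces to proving $T(r, \psi) = S(r, f)$. The key structural fact I would extract from \eqref{E10} is that $P_f$, the pole set of $f$, is closed under $z \mapsto z-1$: at $z = z_0 - 1$ with $z_0 \in P_f$, the term $a(z) f(z+1)$ has a simple pole that must be balanced by $f' - bf^2 - cf - d$; since the latter is singular only at poles of $f$, we must have $z_0 - 1 \in P_f$ (up to an $S(r,f)$-exceptional set where coefficients vanish). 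By part (i), the residues of $b(z)f(z)$ and $b(z+1)f(z+1)$ at all simple poles of $f$ both equal $-1$, so these singular contributions cancel in $\psi$ except at terminal poles $z_0 \in P_f$ with $z_0 + 1 \notin P_f$. A shift count then gives
\[
N(r,\psi) \le N(r, f) - N(r, f(z+1)) + S(r, f) = S(r, f).
\]
The proximity estimate $m(r, \psi) = S(r, f)$ follows from rewriting $\psi$ via \eqref{E10} and applying the log-derivative and shift lemmas together with $m(r, f) = S(r, f)$ from part (i). Combining yields $T(r, \psi) = S(r, f)$ and hence \eqref{E396}.

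\emph{For (iii),} the same two-step strategy applies to the rewritten \eqref{E61}. Leftward shift-invariance of $P_f$ is again forced by the pole of $a(z)f(z+1)$ at $z = z_0 - 1$, and the analogue of $\psi$ is shown small by the same residue cancellation plus a Clunie-type estimate. The two equivalent Riccati normalizations ($L_2 = -a_0$ versus $L_2 = -1$) correspond to isolating either the leading $a_0 f^3$ term or the $(f - a_4) f^2$ combination after dividing by $a_0$; in each case $L_3$ plays the role of $n$ and $C(z)$ collects the residue contribution from the rational right-hand side's pole at $z = a_4(z)$. The \emph{main obstacle} throughout is establishing the leftward shift-invariance of $P_f$ and converting it to the counting bound $N(r, \psi) = S(r, f)$: the residue cancellation rests on the rigid pole-order balance from part (i), and the counting step requires combining the Halburd--Korhonen shift lemma and the subnormal growth hypothesis to identify the terminal-pole set with the symmetric-difference-type set $P_f \setminus (P_f - 1)$ and bound its counting function by $S(r,f)$.
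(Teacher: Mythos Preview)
Your treatment of parts (i) and (ii) is essentially the paper's argument, and in one respect cleaner. The paper transforms $w=b f$, shows the residue of $w$ at each generic simple pole is $-1$, proves backward propagation of poles, and then bounds the ``end points'' (your terminal poles) by the density estimate $n_e(r,w)\le \tfrac1m\,n_p(r+m,w)$ for every $m$. Your identity $n(r,\text{terminal})=n(r,f)-n(r,f(z+1))$ (valid because leftward invariance gives $(P_f-1)\cap D_r\subseteq P_f\cap D_r$ up to $S(r,f)$) followed by the shift estimate $N(r,f)-N(r,f(z+1))=S(r,f)$ is a crisper route to the same conclusion $N(r,\psi)=S(r,f)$. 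One small point: the residue value $-1$ for $bf$ is not a consequence of the order identity $k+1=2k$ alone but of the leading Laurent balance $-\alpha=b(z_0)\alpha^2$ combined with the fact (from (i)) that $f(z+1)$ cannot carry the resulting double pole; you should make this explicit rather than citing ``part (i)''.

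For part (iii) there is a genuine gap. Your claim that leftward shift-invariance of $P_f$ ``is again forced by the pole of $a(z)f(z+1)$ at $z=z_0-1$'' is not correct for \eqref{E61} when $\hat a_3(z):=a_0a_4^3+a_1a_4^2+a_2a_4+a_3\not\equiv 0$. At $z_0-1$ the pole of $a(z)f(z+1)$ can equally well be balanced by a pole of the rational term $P/(f-a_4)$ coming from $f(z_0-1)=a_4(z_0-1)$, so the predecessor of a pole of $f$ may be an $a_4$-point rather than a pole. In the paper's normalisation $u=a_0(f-a_4)$ this is precisely the phenomenon that a generic simple zero of $u$ forces $u(z_0+1)=\infty$; consequently $u(z+1)-u(z)$ acquires poles at the (non-$S(r,f)$ many) zeros of $u$, and your $\psi$-argument collapses. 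The paper instead proves that $\omega(z):=u'(z)-u(z)^2-C(z)u(z)$ is small, and the regularity of $\omega$ at each simple pole of $u$ hinges on the relation $2\eta(z_0)+C(z_0+1)=0$, which is \emph{not} a direct residue balance at $z_0$ but is extracted from a forward iteration together with a fairly long case analysis of the admissible zero--pole patterns $\ldots,u(z_0-1),u(z_0),u(z_0+1),u(z_0+2),\ldots$ (the paper's Steps~1--6). Your sentence ``the analogue of $\psi$ is shown small by the same residue cancellation plus a Clunie-type estimate'' does not supply this mechanism; you would need either to reproduce that pattern analysis or to find an independent argument ruling out $a_4$-points as predecessors of poles.
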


For the remaining case, i.e., when \(\deg_{f}(P) - \deg_{f}(Q) = 0, 1\), then \eqref{E5} takes the form \eqref{E8}, and we present the following theorem.


\begin{theorem}\label{T2}
Let \( f(z) \) be a transcendental meromorphic function that is a solution of the complex Schr\"{o}dinger equation with delay
\begin{equation}\label{E8}
    f'(z) = a(z) f(z+1) + b(z) f(z) + c(z),
\end{equation}
where \( a(z) \), \( b(z) \), and \( c(z) \) are rational functions , and \( a(z) \not\equiv 0 \). Then we have the following:

\begin{enumerate}
    \item[(i)] If all coefficient functions in \eqref{E8} are constants, then \eqref{E8} has only entire solutions. Additionally, if at least one of \( a(z) \), \( b(z) \), or \( c(z) \) is a non-constant rational function, then \( N(r, f(z)) = S(r, f(z)) \).
    
    \item[(ii)] Suppose  \( a(z) \not\equiv 1 \) and \( c(z) \not\equiv 0 \). If \( f(z) \) is a transcendental entire function solution of finite order of \eqref{E8}, then \( \sigma(f) \geq 1 \).
    
    \item[(iii)] Suppose \( a(z) \), \( b(z) \), and \( c(z) \) are complex constants, and that \( f(z) \) is of non-vanishing finite order.
    \begin{enumerate}
        \item[(a)] If \( a(z) + b(z) \not\equiv 0 \), \( a(z) \not\equiv 1 \), and \( c(z) \equiv 0 \), then \( \sigma(f) \geq 1 \).
        
        \item[(b)] If a complex constant \( k \) is a Borel exceptional value of \( f(z) \), and \( a(z) + b(z) \not\equiv 0 \), then \( k = \frac{-c}{a+b} \).
    \end{enumerate}
    
    \item[(iv)] If  \( c(z) \not\equiv 0 \), and if the exponential polynomial in \eqref{E690} is a transcendental entire function solution of \eqref{E8}, then \( Q_1(z), \dots, Q_k(z) \) have at least one term that reduces to a constant.
\end{enumerate}
\end{theorem}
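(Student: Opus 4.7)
The plan is to treat the four parts of Theorem \ref{T2} separately, using a combination of pole tracking, Hadamard factorization, and the difference Nevanlinna machinery.

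For part (i), rewrite \eqref{E8} as $a(z) f(z+1) = f'(z) - b(z) f(z) - c(z)$ and follow pole orders along the integer shifts. When all coefficients are constants, a pole of $f$ at $w_0$ of order $p \geq 1$ forces, by matching pole orders of the two sides at $z = w_0 - 1$, a pole of $f$ at $w_0 - 1$ of order exactly $p - 1$; iterating $p$ times yields a regular point that is simultaneously forced to be a pole — contradiction, so $f$ is entire. In the non-constant rational coefficient case, the same backward propagation holds away from the finite set $E$ of poles and zeros of $a, b, c$, so the poles of $f$ are confined to finitely many forward $\mathbb{Z}$-orbits emanating from $E$; a forward-orbit analysis, combined with the observation that each chain must eventually be blocked by a zero of $a(z)$ (else pole orders grow without bound, incompatible with the Nevanlinna characteristic) yields $N(r,f) = S(r, f)$.

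For part (ii), assume toward contradiction that $\sigma(f) < 1$. Dividing \eqref{E8} by $f$ gives
\begin{equation*}
\frac{f'(z)}{f(z)} - a(z) \frac{f(z+1)}{f(z)} - b(z) = \frac{c(z)}{f(z)}.
\end{equation*}
By the Halburd--Korhonen difference analogue of the logarithmic derivative lemma (valid for $\sigma(f) < 1$), both $m(r, f(z+1)/f(z))$ and $m(r, f'/f)$ are $S(r, f)$; together with $c \not\equiv 0$, this forces $m(r, 1/f) = S(r, f)$, so $T(r, f) = N(r, 1/f) + S(r, f)$ and zeros of $f$ dominate. Evaluating \eqref{E8} at a zero $z_k$ of $f$ (generic for the coefficients) forces $f(z_k + 1) = -c(z_k)/a(z_k)$; comparing with the shifted equation applied at zeros of $f(z+1)$, together with $a \not\equiv 1$, produces the desired contradiction. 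Part (iii)(a) runs along the same lines with $c \equiv 0$, and the hypotheses $a + b \not\equiv 0$ and $a \not\equiv 1$ supply the non-triviality.

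For part (iii)(b), set $g(z) = f(z) - k$; then $g$ satisfies $g'(z) = a\, g(z+1) + b\, g(z) + d$ with $d = (a+b) k + c$. Since $k$ is Borel exceptional, $\lambda(g) < \sigma(g) = \sigma(f)$; by Hadamard's theorem, $g = \pi\, e^{Q}$ with $\pi$ the canonical product of zeros (of order $\lambda(g)$) and $Q$ a polynomial whose degree equals $\sigma(f)$, necessarily $\geq 1$. Substituting and dividing by $e^{Q(z)}$ yields
\begin{equation*}
\pi'(z) + \pi(z) Q'(z) - a\, \pi(z+1)\, e^{Q(z+1) - Q(z)} - b\, \pi(z) = d\, e^{-Q(z)}.
\end{equation*}
The LHS has order at most $\max(\lambda(g), \deg Q - 1) < \sigma(f)$, while the RHS has order exactly $\sigma(f) \geq 1$; hence $d = 0$, i.e., $k = -c/(a+b)$. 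For part (iv), substituting the exponential polynomial ansatz from \eqref{E690} into \eqref{E8} and using $e^{Q_j(z+1)} = e^{Q_j(z)}\, e^{Q_j(z+1) - Q_j(z)}$ (with $Q_j(z+1) - Q_j(z)$ of strictly smaller degree), if every $Q_j$ were non-constant then the nonzero constant $c$ on the right could not be balanced on the left, by Borel--Ritt linear independence of exponentials with distinct polynomial exponents — contradiction. Hence at least one $Q_j$ must reduce to a constant. The main obstacle is part (ii): the hypothesis $a \not\equiv 1$ enters only weakly, so extracting the contradiction requires careful handling of the exceptional sets in the shift estimates and of simultaneous zeros of $f(z)$ and $f(z+1)$.
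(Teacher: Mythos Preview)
Your treatment of parts (i), (iii)(b), and (iv) is essentially sound and close in spirit to the paper. In (i) you iterate poles backward along integer shifts, whereas the paper argues directly with the minimum pole order (the ``index'') among generic poles; both work, though your handling of the non-constant case is sketchy---the claim that unbounded growth of pole orders along a forward orbit is ``incompatible with the Nevanlinna characteristic'' is not justified and is in fact not what you want (you are trying to prove $N(r,f)=S(r,f)$, not derive a contradiction). The paper's index argument avoids this. In (iii)(b) your Hadamard/order-comparison argument is essentially the paper's, and in (iv) your appeal to Borel--Ritt is exactly the content of the paper's Lemma~\ref{L12}, which the paper applies through a more explicit case analysis on the degrees of the $Q_j$.

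The genuine gap is in part (ii), and by extension (iii)(a). First, your local computation at a zero $z_k$ of $f$ is wrong: from $f(z_k)=0$ the equation gives $f'(z_k)=a(z_k)f(z_k+1)+c(z_k)$, so $f(z_k+1)=(f'(z_k)-c(z_k))/a(z_k)$, not $-c(z_k)/a(z_k)$; there is no reason for $f'(z_k)$ to vanish. More seriously, even after correcting this, the outline ``compare with the shifted equation at zeros of $f(z+1)$, together with $a\not\equiv1$'' does not point to any identifiable contradiction; you yourself flag this as the main obstacle and leave it unresolved. The paper takes a completely different route: it invokes Wiman--Valiron theory (and its difference analogue due to Chiang--Feng), using the central index estimates
\[
\frac{f'(z)}{f(z)}=\frac{\nu(r,f)}{z}(1+o(1)),\qquad \frac{f(z+1)-f(z)}{f(z)}=\frac{\nu(r,f)}{z}(1+o(1))
\]
near points of maximum modulus. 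Substituting into \eqref{E8} and solving for $f(z)$ yields $|f(z)|\le C|z|^{N}$ on a sequence outside an exceptional set of finite logarithmic measure, which via Lemma~\ref{L13} and Liouville forces $f$ to be a polynomial---contradiction. The hypotheses $a\not\equiv1$ and $c\not\equiv0$ are exactly what make this algebraic manipulation non-degenerate. The same Wiman--Valiron computation, with $c\equiv0$, handles (iii)(a), where $a+b\not\equiv0$ plays the analogous role. Your logarithmic-derivative approach does give $m(r,1/f)=S(r,f)$, but that alone does not yield the order bound, and the subsequent zero-tracking does not close.
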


\begin{remark}
\cite[Theorem 3.3]{bib32} is a special case of \eqref{E8} with $a(z)=1,$ $b(z)=0,$ $c(z)=0.$
\end{remark}
   
   With Lemma \ref{L21}, Remark \ref{r1}, and Theorem \ref{T2} (iv), we can immediately obtain following corollary.

\begin{corollary}\label{C1}
If \( a(z) \), \( b(z) \), and \( c(z) \) are rational functions with \( c(z) \not\equiv 0 \), and if the exponential polynomial in \eqref{E691} is a transcendental entire function solution of \eqref{E8}, then 
\begin{align*}
    m\left(r, \frac{1}{f}\right) = o(r^q) = S(r, f),
\end{align*} 
and
\begin{align*}
    N\left(r, \frac{1}{f}\right) = C(\operatorname{co}(W_0)) \frac{r^q}{2\pi} + o(r^q).
\end{align*}
\end{corollary}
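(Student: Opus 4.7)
The plan is to assemble the corollary directly from the three ingredients cited just before its statement, since the author already advertises it as an immediate consequence. The first step is to invoke Theorem \ref{T2}(iv): under the present hypotheses on $a(z)$, $b(z)$, $c(z)$ (now allowed to be rational rather than merely constant), the same Borel/Wiman argument used there applies, so whenever the exponential polynomial in \eqref{E691} solves \eqref{E8} with $c(z)\not\equiv 0$, at least one of the exponents $Q_j(z)$ of its leading exponential terms must degenerate to a constant. This is the qualitative information Theorem \ref{T2}(iv) contributes.

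Next I would translate this degeneration condition into the convex-hull language used by Lemma \ref{L21}. Remark \ref{r1} presumably records exactly that: once one exponent is constant, the origin $0$ lies on the boundary of the convex hull of the exponent set, so that the associated convex body $W_0$ from Steinmetz's formula is nontrivial. Under this configuration Lemma \ref{L21}, which is the Steinmetz--Gol'dberg--Ostrovskii type asymptotic for the Nevanlinna functionals of an exponential polynomial with at least one constant leading exponent, gives precisely
\begin{equation*}
m\!\left(r,\frac{1}{f}\right)=o(r^{q}),\qquad N\!\left(r,\frac{1}{f}\right)=C(\operatorname{co}(W_{0}))\,\frac{r^{q}}{2\pi}+o(r^{q}),
\end{equation*}
where $q$ is the common degree of the polynomial exponents. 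Since $f$ itself has order $q$, the term $o(r^q)$ is of the form $S(r,f)$, which is exactly the stated equality $m(r,1/f)=o(r^q)=S(r,f)$.

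The only thing left to do is a sanity check that no hypothesis of Lemma \ref{L21} beyond what is supplied by Theorem \ref{T2}(iv) and Remark \ref{r1} is missing. The coefficients of the exponential polynomial are assumed rational (small with respect to $f$), and the constancy of one $Q_j$ has just been proved; these are the two inputs typically required. Therefore the conclusion of Lemma \ref{L21} can be quoted verbatim.

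The main potential obstacle, and the reason the proof deserves any comment at all, is the verification that Theorem \ref{T2}(iv) really does survive passage from constant to rational $a(z)$, $b(z)$, $c(z)$: part (iv) of Theorem \ref{T2} was stated over constants, whereas Corollary \ref{C1} allows rationals. I would handle this either by observing that the argument for Theorem \ref{T2}(iv) uses only that the coefficients are small functions of $f$ — which is automatic for rational coefficients when $f$ is a transcendental exponential polynomial — or by appealing to Remark \ref{r1} as the bridge that legitimises this extension. Beyond this bookkeeping, no new analytical input is needed.
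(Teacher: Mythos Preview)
Your outline is the paper's own approach, but several details are misread and need correcting.

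First, Theorem~\ref{T2} is already stated for rational coefficients $a(z),b(z),c(z)$; part (iv) does not assume they are constants, so no extension is needed and Remark~\ref{r1} plays no role as a ``bridge'' here.

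Second, the implication you draw from ``some $Q_j$ is constant'' is not the right one. The point is not that $0$ lies on the boundary of the convex hull (in fact $W_0=\{0,\bar\omega_1,\dots,\bar\omega_m\}$ always contains $0$ by definition). Rather, if some $Q_j$ is constant then the term $P_j(z)e^{Q_j(z)}$ is a polynomial, so in the normalised representation \eqref{E691} one has $H_0(z)\not\equiv 0$. This is precisely the hypothesis in the second clause of Lemma~\ref{L21}.

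Third, Lemma~\ref{L21} does not deliver both displayed conclusions ``verbatim''. Under $H_0\not\equiv 0$ it gives only $m(r,1/f)=o(r^q)$; the formula $N(r,1/f)=C(\operatorname{co}(W_0))\frac{r^q}{2\pi}+o(r^q)$ then follows by combining this with the unconditional asymptotic $T(r,f)=C(\operatorname{co}(W_0))\frac{r^q}{2\pi}+o(r^q)$ from the first line of Lemma~\ref{L21} and the first fundamental theorem (since $f$ is entire, $N(r,1/f)=T(r,f)-m(r,1/f)+O(1)$). Remark~\ref{r1} is invoked only to identify $o(r^q)$ with $S(r,f)$.
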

\begin{remark}
The definition of exponential polynomial, as well as some notation appearing in Corollary  \ref{C1}, can be found in section $3.$    
\end{remark}

\section{Preliminaries}\label{sec3}
We assume that the reader is familiar with the standard notations and basic results of Nevanlinna theory. For a meromorphic function \( f \), the characteristic function is defined by
\[
T(r, f) = m(r, f) + N(r, f),
\]
where \( N(r, f) \) is the Nevanlinna counting function for the poles of \( f \), and \( m(r, f) \) is the Nevanlinna proximity function of \( f \). For a meromorphic function \( f \) in the complex plane, the order of \( f \) is defined as
\[
\sigma(f) = \limsup_{r \to \infty} \frac{\log T(r, f)}{\log r}.
\]
The exponent of convergence for the zero-sequence and pole-sequence of a meromorphic function \( f \) is given by
\[
\lambda(f) = \limsup_{r \to \infty} \frac{\log N\left(r, \frac{1}{f}\right)}{\log r},
\]
and
\[
\lambda\left(\frac{1}{f}\right) = \limsup_{r \to \infty} \frac{\log N(r, f)}{\log r}.
\]
If a complex constant \( a \) satisfies
\begin{equation}
\limsup_{r \to \infty} \frac{\log n(r, a)}{\log r} < \sigma(f),
\end{equation}
then \( a \) is called a Borel exceptional value of \( f(z) \).

Moreover, we say that a meromorphic function \( g \) is a small function with respect to a meromorphic function \( f \) if \( T(r, g) = S(r, f) \), where \( S(r, f) \) represents a quantity that satisfies \( S(r, f) = o(T(r, f)) \), as $r\to\infty$, possibly outside of an exceptional set of finite logarithmic measure or of zero upper density measure.

\begin{definition}
\( N_{1}(r, f) \) denotes the integrated counting function for simple poles of \( f \) in \( |z| < r \), and \( N_{[p}(r, f) \) denotes the integrated counting function for poles of order \( p \) or higher in \( |z| < r \). For \( a \in \mathbb{C} \), we similarly define \( N_{1}\left(r, \frac{1}{f - a}\right) \).
\end{definition}

\begin{definition} \cite{bib15, bib26}
A transcendental meromorphic function \( f(z) \) is subnormal if it satisfies
\[
\limsup_{r \to \infty} \frac{\log T(r, f)}{r} = 0,
\]
where \( T(r, f) \) is the Nevanlinna characteristic function of \( f(z) \).
\end{definition}

\begin{definition}\label{D3}
A differential-difference polynomial in \( f(z) \) is defined by
\begin{align*}
P(z, f) &= \sum_{l \in L} b_{l}(z) f(z)^{l_{0, 0}} f(z + c_{1})^{l_{1, 0}} \dots f(z + c_{\nu})^{l_{\nu, 0}} \left[f'(z)\right]^{l_{0, 1}} \dots \left[f^{(\mu)}(z + c_{\nu})\right]^{l_{\nu, \mu}},
\end{align*}
where \( c_{1}, \dots, c_{\nu} \) are distinct non-zero complex constants, \( L \) is a finite index set consisting of elements of the form \( l = (l_{0, 0}, \dots, l_{\nu, \mu}) \), and the coefficients \( b_l(z) \) are small functions of \( f(z) \) for all \( l \in L \).
\end{definition}

For more detailed definitions and notations of Nevanlinna theory, we refer the reader to \cite{bib21,bib27}.

We now define the concept of an exponential polynomial.  A function \( f(z) \) is called an exponential polynomial if it has the form
\begin{equation}\label{E690}
f(z) = P_{1}(z)e^{Q_{1}(z)} + \dots + P_{k}(z)e^{Q_{k}(z)},
\end{equation}
where \( P_j \) and \( Q_j \) are polynomials in \( z \). Let \( q = \max\{\deg(Q_j) : Q_j \not\equiv 0\} \), and let \( \omega_{1}, \dots, \omega_{m} \) be the distinct leading coefficients of the polynomials \( Q_j(z) \) of maximum degree \( q \). Then \eqref{E690} can be rewritten as
\begin{equation}\label{E691}
f(z) = H_{0}(z) + H_{1}(z)e^{\omega_{1} z^{q}} + \dots + H_{m}(z)e^{\omega_{m} z^{q}},
\end{equation}
where \( H_{j} \) are either exponential polynomials of degree less than \( q \) or ordinary polynomials in \( z \). By construction, we have \( H_{j}(z) \not\equiv 0 \) for \( 1 \le j \le m \).

\begin{definition}\label{D4} \cite{bib33,bib36}
The convex hull of a set \( W \subset \mathbb{C} \), denoted by \( \operatorname{co}(W) \), is the intersection of all convex sets containing \( W \). If \( W \) contains only finitely many elements, then \( \operatorname{co}(W) \) is obtained as an intersection of finitely many closed half-planes, and hence \( \operatorname{co}(W) \) is either a compact polygon (with a non-empty interior) or a line segment. We denote the perimeter of \( \operatorname{co}(W) \) by \( C(\operatorname{co}(W)) \). If \( \operatorname{co}(W) \) is a line segment, then \( C(\operatorname{co}(W)) \) is twice the length of this line segment. In this paper, we fix the notations \( W = \{\overline{\omega}_1, \dots, \overline{\omega}_m\} \) and \( W_0 = \{0, \overline{\omega}_1, \dots, \overline{\omega}_m\} \), concerning the constants \( \omega_{j} \) in equation \eqref{E691}.
\end{definition}

\section{Lemmas}\label{sec4}
The first lemma is the celebrated lemma of the logarithmic derivative.
\begin{lemma}\label{L1}
 Let $f(z)$ be a non-constant meromorphic function. Then 
\begin{flalign*}
m\left(r,\frac{{f}'(z) }{f(z)}\right)=O(\log rT(r, f)) \quad (r\longrightarrow\infty, r\notin E),
\end{flalign*}
where $E$ is of finite linear measure.
\end{lemma}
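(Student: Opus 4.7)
The plan is to establish this classical logarithmic derivative lemma via the Poisson--Jensen route, and then absorb the auxiliary radius by Borel's growth lemma. First I would fix $R>r>0$ and apply the Poisson--Jensen formula to $\log f$ on $|z|<R$, obtaining
\[
\log f(z) = \frac{1}{2\pi}\int_0^{2\pi}\log|f(Re^{i\theta})|\,\frac{Re^{i\theta}+z}{Re^{i\theta}-z}\,d\theta - \sum_{\mu}\log\frac{R^{2}-\bar a_\mu z}{R(z-a_\mu)} + \sum_{\nu}\log\frac{R^{2}-\bar b_\nu z}{R(z-b_\nu)} + ic,
\]
where $\{a_\mu\}$ and $\{b_\nu\}$ are the zeros and poles of $f$ in $|z|<R$ counted with multiplicity. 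Differentiating in $z$ gives an explicit representation of $f'(z)/f(z)$ as a boundary integral plus a finite sum of M\"obius-type terms $\frac{1}{z-a_\mu}+\frac{\bar a_\mu}{R^{2}-\bar a_\mu z}$ (and analogously for $b_\nu$).

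Next I would estimate $m(r,f'/f)$ by taking $\log^+|\cdot|$ of this representation and integrating over $|z|=r$. The differentiated boundary integral is dominated pointwise by a constant multiple of $\frac{R\bigl(m(R,f)+m(R,1/f)\bigr)}{(R-r)^{2}}$, whose logarithm contributes $O\!\bigl(\log^{+}\!\tfrac{R\,T(R,f)}{R-r}\bigr)$ after one application of Jensen's formula. For the rational part one invokes the standard elementary inequality
\[
\frac{1}{2\pi}\int_{0}^{2\pi}\log^{+}\!\frac{1}{|re^{i\theta}-a|}\,d\theta \le \log^{+}\!\frac{1}{R-r} + O(1), \qquad |a|\le R,
\]
together with the counting-function estimate $n(R,f)+n(R,1/f)=O\!\bigl(T(R,f)/\log(R/r)\bigr)$ coming from $N(R,f)\le T(R,f)$, to bound the total proximity contribution of the zero/pole sums by $O\!\bigl(\tfrac{T(R,f)}{\log(R/r)}\log^{+}\!\tfrac{R}{R-r}\bigr)$.

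At this stage I would specialize $R=r+1/T(r,f)$ (or a similarly small enlargement) and apply Borel's growth lemma, which asserts $T(R,f)\le 2T(r,f)$ for all $r$ outside an exceptional set $E\subset[0,\infty)$ of finite linear measure. Substituting this back collapses both estimates into a single bound of the form $m(r,f'/f)=O(\log(rT(r,f)))$ for $r\notin E$, which is precisely the assertion of the lemma.

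The main technical obstacle will be the simultaneous bookkeeping of the two distinct logarithmic losses: one of size $\log^{+}\!\tfrac{1}{R-r}$ from the rational (zero/pole) sums, and one of size $\log^{+}(R\,T(R,f))$ from the boundary integral. Both must be shown to combine into a single $\log(rT(r,f))$, and this balancing is exactly what forces the choice $R-r = 1/T(r,f)$ and makes Borel's lemma indispensable; once that choice is made the remaining calculations are routine.
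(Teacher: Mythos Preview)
Your outline is the standard Nevanlinna proof of the logarithmic derivative lemma and is essentially correct: Poisson--Jensen, differentiate, split into the boundary kernel piece and the zero/pole sums, then balance with the choice $R=r+1/T(r,f)$ and invoke Borel's growth lemma to discard an exceptional set of finite linear measure. One small quibble: in the counting-function step you wrote $n(R,f)+n(R,1/f)=O\bigl(T(R,f)/\log(R/r)\bigr)$, but what you actually need (and what the standard argument uses) is $n(r,f)+n(r,1/f)\le \bigl(N(R,f)+N(R,1/f)\bigr)/\log(R/r)$, i.e.\ the counting function at the \emph{inner} radius bounded via the integrated counting function at the outer radius; with your specific choice of $R$ this still collapses correctly, so the argument goes through.

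As for comparison with the paper: there is nothing to compare. The paper does not prove Lemma~\ref{L1} at all; it simply records it as ``the celebrated lemma of the logarithmic derivative'' and moves on, treating it as a black-box classical result. Your proposal therefore supplies a proof where the paper supplies none, and the route you chose is exactly the textbook one (cf.\ Hayman or Laine, both cited in the paper's bibliography).
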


\begin{lemma}\label{L2}
\cite[Lemma 2.1]{bib42} Let \( T(r) \) be a non-decreasing, positive function on \( \left[1, +\infty\right) \), which is logarithmically convex and satisfies \( T(r) \to +\infty \) as \( r \to \infty \). Assume that 
\begin{equation*}
\liminf_{r \to \infty}\frac{\log T(r)}{r} = 0.
\end{equation*}
Define \( \phi(r) = \max_{1 \le t \le r} \left\{ \frac{t}{\log T(t)} \right\} \). Then, for a constant \( \delta \in \left(0, \frac{1}{2}\right) \), we have 
\begin{equation*}
T(r) \le T(r + \phi^{r}(r)) \le (1 + 4\phi^{\delta - \frac{1}{2}}(r)) T(r), \quad r \notin E_{\delta},
\end{equation*}
where \( E_{\delta} \) is a subset of \( \left[1, +\infty\right) \) with zero lower density. Furthermore, \( E_{\delta} \) has zero upper density if the condition (i) holds for \( \limsup \).
\end{lemma}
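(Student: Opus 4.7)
The plan is to argue by contradiction: if the upper estimate fails on a set $E_{\delta}$ of positive lower density, then iterating the defect (via the logarithmic convexity of $T$) forces $T$ to grow faster than subnormally, contradicting $\liminf_{r\to\infty}\log T(r)/r=0$. The left-hand inequality $T(r)\le T(r+\varphi^{r}(r))$ is automatic from monotonicity, so only the right-hand bound and the density claim require work.

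I would begin by recording basic properties of $\varphi$. The hypothesis $\liminf_{r\to\infty}\log T(r)/r=0$ furnishes a sequence $t_n\to\infty$ with $t_n/\log T(t_n)\to\infty$; since $\varphi$ is a running maximum, this forces $\varphi(r)\to\infty$ and $\varphi$ to be non-decreasing. Because $\delta-\tfrac12<0$, it follows that $\varphi^{\delta-1/2}(r)\to 0$, so the claimed upper bound amounts to saying that, outside a thin set, $T$ is almost multiplicatively stable under increments of size $\varphi^{r}(r)$.

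Next I would set
\[
E_{\delta}=\bigl\{\,r\ge 1\ :\ T(r+\varphi^{r}(r)) > (1+4\varphi^{\delta-1/2}(r))\,T(r)\,\bigr\}
\]
and suppose for contradiction that $\underline{d}(E_{\delta})>0$. Starting from any $r_0\in E_{\delta}$, define inductively $r_{k+1}=r_k+\varphi^{r_k}(r_k)$. Whenever $r_k\in E_{\delta}$, the elementary estimate $\log(1+x)\ge x/2$ for $x\in[0,1]$ (applied with $x=4\varphi^{\delta-1/2}(r_k)$, which is $\le 1$ once $k$ is large) gives
\[
\log T(r_{k+1})-\log T(r_k)\ \ge\ 2\,\varphi^{\delta-1/2}(r_k).
\]
Summing over the indices $k<N$ for which $r_k\in E_{\delta}$ (a positive fraction of them, by the density assumption) and using the logarithmic convexity of $T$ to ensure no loss on the remaining indices, one produces a lower bound on $\log T(r_N)/r_N$ that stays above a positive constant along a sequence $r_N\to\infty$, contradicting $\liminf\log T(r)/r=0$. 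The limsup version is identical, except that the orbit is initialised along a sequence realising the upper density of $E_{\delta}$.

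The main obstacle I anticipate is the bookkeeping that converts the pointwise defect into a density statement: one must track simultaneously the density of visits of the orbit $(r_k)$ to $E_{\delta}$, the step sizes $r_{k+1}-r_k=\varphi^{r_k}(r_k)$, and the accumulated multiplicative defect. The constraint $\delta<\tfrac12$ is exactly what lets $\varphi^{\delta-1/2}$ decay slowly enough that $\sum_{k}\varphi^{\delta-1/2}(r_k)$ dominates a linear function of $r_N$, which is what powers the contradiction with subnormal growth.
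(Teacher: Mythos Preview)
The paper does not prove this lemma; it is quoted verbatim from \cite[Lemma~2.1]{bib42} as an auxiliary tool, so there is no in-paper argument to compare your attempt against. Your sketch is therefore a stand-alone attempt at a result the authors simply import.

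On the mathematics: the overall strategy---contradiction via a Borel-type iteration, exploiting log-convexity---is in the right genre, but the step where you assert that ``a positive fraction'' of the orbit points $r_k$ lie in $E_\delta$ ``by the density assumption'' is a genuine gap. Positive lower density of $E_\delta$ is a statement about the Lebesgue measure of $E_\delta\cap[1,R]$; it says nothing about how often the particular discrete sequence $r_0,r_1,\ldots$ (with variable step sizes $\varphi^{\delta}(r_k)$ that themselves grow without bound) visits $E_\delta$. An orbit could in principle hop over most of $E_\delta$. The standard repair is not to iterate a single orbit but to cover $E_\delta\cap[1,R]$ greedily by intervals $[s_j,\,s_j+\varphi^{\delta}(s_j)]$ with $s_j\in E_\delta$, extract a disjoint subfamily, and then combine the increment bound on each interval with the monotonicity of $(\log T)'$ (this is where log-convexity is actually used) and the defining inequality $\varphi(r)\ge r/\log T(r)$ to force $\log T(R)/R$ to stay bounded away from zero. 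That bookkeeping---which you yourself flag as the ``main obstacle''---is not a detail that fills itself in; it is the substance of the argument. Finally, note that $\varphi^{r}(r)$ in the stated lemma is almost certainly a transcription error for $\varphi^{\delta}(r)$; your sketch inherits this without comment, and with the exponent $r$ the statement is not even meaningful.
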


The difference version of the logarithm derivative lemma has been improved by Zheng and the second author \cite{bib42}  in 2020. This improvement extends the lemma from cases of finite growth order \cite{bib10, bib17} and hyper-order less than one \cite{bib18} to subnormal functions \cite{bib42}.

\begin{lemma}\label{L3}
\cite[difference version of logarithmic derivative lemma]{bib42} Let $f$ be a non-constant meromorphic function, and let $c\in \mathbb{C}$ with $c\ne0$. If 
\begin{flalign*}
\limsup_{r\to \infty }\frac{\log  T(r, f)}{r}=0, 
\end{flalign*}
then
\begin{equation*}
m\left ( r,\frac{f\left ( z+c \right ) }{f(z)} \right ) +m\left ( r,\frac{f(z)}{f(z+c)} \right ) =o(T(r, f))
\end{equation*}
for all $r\notin E$ where $E$ is a set with zero upper density measure $E$, i.e, $$\overline{{\rm dens}}E = \limsup_{r \to \infty} \frac{1}{r} \int_{E\cap \left [ 1, r \right ] }dt=0.$$
\end{lemma}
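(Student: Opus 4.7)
The plan is to follow the Halburd--Korhonen--Tohge scheme for the difference logarithmic derivative, then refine the growth accounting with the auxiliary radius function furnished by Lemma \ref{L2}. Assuming without loss of generality that $f(0)\notin\{0,\infty\}$ (otherwise replace $f$ by $z^{k}f$ for a suitable integer $k$), I would start from the standard pointwise inequality derived from the Poisson--Jensen formula: for every $0<r<R-|c|$,
\[
m\!\left(r,\frac{f(z+c)}{f(z)}\right)+m\!\left(r,\frac{f(z)}{f(z+c)}\right)\le K\,\frac{|c|\,R}{(R-r)^{2}}\bigl(T(R+|c|,f)+\log^{+}T(R+|c|,f)\bigr)+K\log\frac{R}{R-r},
\]
where $K=K(c)$ is an absolute constant. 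This is obtained by differentiating the Poisson--Jensen representation of $\log|f(z+c)/f(z)|$ and estimating the zero--pole contributions in $|w|\le R+|c|$ via $N(R+|c|,f)+N(R+|c|,1/f)\le 2T(R+|c|,f)+O(1)$.

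Next, I would exploit the subnormality hypothesis. Since $\log T(r,f)/r\to 0$, the function $\phi(r)=\max_{1\le t\le r}\{t/\log T(t)\}$ tends to infinity, so for $r$ large enough we have $\phi^{r}(r)>2|c|$. Choosing $R:=r+\phi^{r}(r)$, the geometric prefactor satisfies
\[
\frac{|c|\,R}{(R-r)^{2}}=\frac{|c|\,(r+\phi^{r}(r))}{\phi^{r}(r)^{2}}\longrightarrow 0,
\]
while Lemma \ref{L2} yields
\[
T(R+|c|,f)\le T(r+2\phi^{r}(r),f)\le\bigl(1+4\phi^{\delta-1/2}(r)\bigr)^{2}T(r,f)=(1+o(1))T(r,f)
\]
for $r$ outside a set $E_{\delta}\subset[1,\infty)$ of zero upper density. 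Substituting these estimates into the Poisson--Jensen bound, the leading term reduces to $o(1)\cdot T(r,f)$, and the logarithmic corrections $\log^{+}T(R+|c|,f)$ and $\log(R/(R-r))$ are $O(\log^{+}T(r,f))=o(T(r,f))$ by subnormality, which delivers the desired estimate off $E_{\delta}$.

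The main obstacle is the bootstrap character of the argument: Lemma \ref{L2} controls $T$ only at the single radius $r+\phi^{r}(r)$, whereas the Poisson--Jensen estimate evaluates $T$ at $R+|c|=r+|c|+\phi^{r}(r)$. One therefore must either absorb the $|c|$-shift by a second application of Lemma \ref{L2}, or redefine $R$ as $r+2\phi^{r}(r)$ at the cost of slightly weakening the geometric prefactor. A further technical point is to merge the exceptional radii arising from the Poisson--Jensen step (where poles of $f$ cluster near $|z|=r$) with $E_{\delta}$ without inflating the upper density; this is handled by working with a slightly enlarged counting function and observing that under subnormality the set of such bad radii already has zero upper density.
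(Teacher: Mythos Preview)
The paper does not supply its own proof of this lemma; it is quoted verbatim from Zheng--Korhonen \cite{bib42}. Your high-level plan---a Poisson--Jensen pointwise bound on $\log|f(z+c)/f(z)|$ followed by a judicious choice of comparison radius governed by Lemma~\ref{L2}---is exactly the scheme of that reference, so strategically you are aligned with the source.

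There is, however, a genuine gap in the execution. You assert that with $R-r$ equal to the auxiliary increment of Lemma~\ref{L2} (call it $\phi^{\tau}(r)$, whatever exponent the paper's notation $\phi^{r}(r)$ is meant to encode) the geometric prefactor
\[
\frac{|c|\,R}{(R-r)^{2}}=\frac{|c|\bigl(r+\phi^{\tau}(r)\bigr)}{\phi^{2\tau}(r)}
\]
tends to $0$. This fails under the bare subnormality hypothesis. Take $\log T(r,f)\sim r/(\log r)^{2}$: then $\phi(r)=\max_{1\le t\le r}t/\log T(t,f)\asymp(\log r)^{2}$, so $(R-r)^{2}$ grows only polylogarithmically while $R\sim r$, and the prefactor diverges to $+\infty$. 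Multiplying by $T(R+|c|,f)\le(1+o(1))T(r,f)$ therefore does \emph{not} yield $o(T(r,f))$; your displayed inequality then gives only a trivial upper bound.

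The argument in \cite{bib42} does not rely on that prefactor vanishing. The Poisson--Jensen remainder is split so that the dominant contributions are controlled by \emph{increments} of Nevanlinna functions---quantities comparable to $T(R,f)-T(r,f)$ and $N(R,f)-N(r,f)$---rather than by $T(R,f)$ itself, and it is precisely these increments that Lemma~\ref{L2} bounds by $4\phi^{\delta-1/2}(r)\,T(r,f)=o(T(r,f))$. You should recast your starting inequality (or integrate the Blaschke and boundary pieces separately) so that the main term appears as such an increment; with your present formulation the growth lemma cannot close the estimate. Your remarks on absorbing the $|c|$-shift by a second application of Lemma~\ref{L2} and on merging exceptional sets are correct and will still be needed once the main inequality is repaired.
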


By Lemma \ref{L2} and Lemma \ref{L3}, it is easy to get the following equalities which will be used throughout this paper:
\begin{eqnarray*}
&&T(r, f(z+c))=T(r, f(z))+S(r, f(z)),\\
&&N(r, f(z+c))=N(r, f(z))+S(r, f(z)).
\end{eqnarray*}

The following lemma, due to Halburd and the second author \cite{bib16}, considers a transcendental meromorphic solution of the equation $P(z,w)=0$, where the coefficients are rational functions. Later, Xu and the first author \cite{bib39} further generalized this result to  admissible meromorphic solutions of the equation $P(z,f)=0$ by applying Lemma \ref{L3}.

\begin{lemma}\label{L4}
Let $f(z)$ be an admissible meromorphic solution of 
$$P(z,f(z))=0,$$
where $P(z,f(z))$ is a differential-difference polynomial, which is defined by Definition \eqref{D3}. Let $a_{1},\ldots,a_{k},$ be meromorphic functions small with respect to $f(z)$, such that $P(z,a_{j}(z))\not\equiv 0$  for all $j=1,\ldots,k$. If there exist $s> 0$ and $\tau \in \left ( 0, 1 \right )$ such that 
 \begin{flalign*}
 \sum_{j=1}^{k} n\left ( r, \frac{1}{f-a_{j} }  \right ) \le k\tau n(r+s, f)+O(1),
 \end{flalign*}
 then 
 $$\limsup_{r \to \infty} \frac{\log  T(r, f)}{r} >0.$$
 \end{lemma}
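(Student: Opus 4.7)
I would argue by contradiction. Suppose that $f$ is subnormal, i.e.\ $\limsup_{r\to\infty}\log T(r,f)/r = 0$. This is exactly the regime in which Lemma~\ref{L3} applies, delivering the difference logarithmic derivative estimate and, in turn, the shift relations $T(r,f(z+c)) = T(r,f) + S(r,f)$ and $N(r,f(z+c)) = N(r,f) + S(r,f)$ outside a set of zero upper density. Moreover, Lemma~\ref{L2} yields $N(r+s,f) = (1+o(1))N(r,f) + S(r,f)$ on a similarly large set of $r$. Integrating the standing hypothesis therefore gives the upper bound
\begin{equation*}
\sum_{j=1}^{k} N\!\left(r, \frac{1}{f - a_j}\right) \le k\tau\, N(r+s, f) + O(\log r) \le (k\tau + o(1))\, N(r,f) + S(r,f).
\end{equation*}

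The next step is to extract a complementary \emph{lower} bound from the equation $P(z,f(z)) = 0$ together with the non-degeneracy $P(z,a_j(z)) \not\equiv 0$. At each zero $z_0$ of $f(z) - a_j(z)$ outside an $S(r,f)$-negligible exceptional set (poles and zeros of the coefficients $b_l$ of $P$, of the $a_j$'s, and of the shifts and derivatives of $f$ that are controlled by Lemma~\ref{L3}), substituting $f(z_0) = a_j(z_0)$ into $P(z_0, f(z_0)) = 0$ produces a nontrivial identity among the remaining values $f(z_0 + c_i)$ and $f^{(p)}(z_0 + c_i)$, with coefficients involving the nonvanishing function $P(z, a_j(z))$. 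Summing over $j = 1, \ldots, k$ and using Lemma~\ref{L3} to absorb the $m$-contributions of shifts and derivatives into $S(r,f)$, one obtains, via the small-function form of Nevanlinna's second main theorem applied to the distinct targets $a_1, \ldots, a_k$, an estimate of the form
\begin{equation*}
T(r,f) \le \frac{1}{k}\sum_{j=1}^{k} N\!\left(r, \frac{1}{f - a_j}\right) + o(T(r,f)) + S(r,f),
\end{equation*}
equivalently a comparison between $T(r,f)$ and $N(r,f)$ up to $S(r,f)$ terms.

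Combining the two bounds yields $(1 - \tau - o(1))\, T(r,f) \le S(r,f)$ on a set of $r$ of positive upper density, which contradicts the fact that $f$ is transcendental and closes the argument. The main obstacle is the lower bound step: one must convert the purely algebraic non-degeneracy $P(z,a_j) \not\equiv 0$ into an effective Nevanlinna estimate that squeezes $\sum_j N(r, 1/(f-a_j))$ up to (a multiple of) $T(r,f)$, uniformly in $k$. This requires careful bookkeeping of the shifts $c_i$ and derivative orders appearing in $P$, combined with the classical and difference versions of the logarithmic derivative lemma (Lemmas~\ref{L1} and \ref{L3}); this is the core of the Halburd--Korhonen pole/zero counting argument in \cite{bib16} and its extension by Xu--Cao to admissible meromorphic solutions via Lemma~\ref{L3} in \cite{bib39}.
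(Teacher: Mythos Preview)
Your overall architecture is right: assume subnormality, integrate the counting hypothesis, use Lemma~\ref{L2} to replace $N(r+s,f)$ by $N(r,f)+S(r,f)$, and confront this upper bound with a lower bound of the form $\sum_j N(r,1/(f-a_j)) \ge kT(r,f)+S(r,f)$. The paper does not prove Lemma~\ref{L4} itself but cites \cite{bib16,bib39}, and those references use exactly this scheme.

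The gap is in your lower bound. You invoke ``the small-function form of Nevanlinna's second main theorem applied to the distinct targets $a_1,\ldots,a_k$'' to obtain $kT(r,f)\le \sum_j N(r,1/(f-a_j))+S(r,f)$. The second main theorem does \emph{not} give this: at best it yields $(k-1)T(r,f)\le \sum_j \overline{N}(r,1/(f-a_j))+S(r,f)$, which, combined with your upper bound $\sum_j N\le k\tau\,T+S$, only forces a contradiction when $\tau<1-1/k$. For $\tau\in[1-1/k,1)$ the argument collapses, and for $k=1$ (the case actually used in the paper, e.g.\ in Lemma~\ref{L7} and in Cases~2--3 of Theorem~\ref{T1}) the second main theorem yields nothing at all.

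What is needed is precisely Lemma~\ref{L10}, the differential-difference Mohon'ko lemma. Since $P(z,a_j)\not\equiv 0$, Lemma~\ref{L10} gives $m(r,1/(f-a_j))=S(r,f)$ for each $j$ individually, and then the first main theorem yields $N(r,1/(f-a_j))=T(r,f)+S(r,f)$. Summing over $j$ produces $\sum_j N(r,1/(f-a_j))=kT(r,f)+S(r,f)$, and combining with the integrated hypothesis gives $(1-\tau)kT(r,f)\le S(r,f)$, the desired contradiction for every $\tau\in(0,1)$. Your paragraph describing ``substituting $f(z_0)=a_j(z_0)$ into $P(z_0,f(z_0))=0$'' is gesturing at the Mohon'ko mechanism, but the conclusion you need is about the proximity function $m(r,1/(f-a_j))$, not a pointwise statement at zeros, and it comes from Lemma~\ref{L10} rather than from the second main theorem.
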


Laine and Yang provided Clunie theorems for differences when $f(z)$ is of finite order\cite[Theorem 2.3]{bib29}. Hu and Wang gave an improved Clunie theorem for differential-difference equations when the growth of $f(z)$ is of hyper-order strictly less than one\cite[Lemma 3.2]{bib23}. Indeed,
 we only need to replace   the difference version of the logarithmic derivative lemma with  $\sigma_2(f)<1$ by Lemma \ref{L3} in \cite[Lemma 3.2]{bib23} to obtain the following slightly improved lemma.

\begin{lemma}\label{L8}
Let $f(z)$ be a transcendental meromorphic solution of the differential-difference equation 
\begin{flalign*}
U(z,f(z))P(z,f(z))=Q(z,f(z)),
\end{flalign*}
where $U(z,f(z))$ is a difference polynomial in $f(z)$ with small meromorphic coefficients, $P(z,f)$ and $Q(z,f)$ are differential-difference polynomials in $f(z)$ with small meromorphic coefficients, $\deg_{f}(U)=n$ and  $\deg_{f} Q(z,f(z))\le n$. Moreover, assume that $U(z,f(z))$ contains only one term of maximal total degree in $f(z)$ and its shifts. If $$\limsup_{r \to \infty}\frac{\log T  (r,f)}{r}=0,$$ then we have
\begin{flalign*}
m(r,P(z,f(z))=S(r,f(z))
\end{flalign*}
outside an exceptional set of finite logarithmic measure or of zero upper density measure.
\end{lemma}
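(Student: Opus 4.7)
The plan is to follow the classical Clunie-type argument, with the sole substantive adjustment being to use Lemma \ref{L3} (the subnormal difference logarithmic derivative lemma) in place of its finite-order or hyper-order $<1$ predecessors. Write
\[
U(z,f(z)) = \sum_{\lambda \in \Lambda} b_\lambda(z)\, f(z)^{\lambda_0} \prod_{j=1}^{k} f(z+c_j)^{\lambda_j},
\]
and let $U_0(z, f(z))$ denote the unique monomial of maximal total degree $\lambda_0 + \dots + \lambda_k = n$, with small coefficient $b_0(z)$. Split the circle of radius $r$ into $E_1 = \{\theta : |f(re^{i\theta})| \le 1\}$ and $E_2 = [0, 2\pi) \setminus E_1$, so that
\[
m(r, P) = \frac{1}{2\pi} \int_{E_1} \log^+|P|\, d\theta + \frac{1}{2\pi} \int_{E_2} \log^+|P|\, d\theta.
\]

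On $E_1$ I would estimate $|P|$ monomial-by-monomial, writing each term of the differential-difference polynomial $P$ as a product of a small coefficient, a nonnegative power of $f(z)$ (bounded by $1$ on $E_1$), shift quotients $f(z+c_j)/f(z)$, and logarithmic derivatives $f^{(i)}(z+c_j)/f(z+c_j)$. The proximities of the coefficients are $S(r,f)$ by hypothesis, those of the shift quotients are $S(r,f)$ by Lemma \ref{L3}, and those of the logarithmic derivatives are $S(r,f)$ by Lemma \ref{L1} combined with the shift invariance $T(r, f(z+c)) = T(r, f) + S(r,f)$ noted after Lemma \ref{L3}. On $E_2$ I would substitute $P = Q/U$ and divide by $f(z)^n$: the term $U_0/f(z)^n$ becomes a product of $b_0(z)$ with shift quotients $f(z+c_j)/f(z)$ (hence $m$-small), whereas every other monomial of $U/f(z)^n$ picks up a strictly negative net power of $f(z)$. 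Choosing $M$ sufficiently large, on $E_2' = \{|f(z)| > M\}$ the non-leading contributions are uniformly dominated, so $|U/f(z)^n|$ is comparable to $|U_0/f(z)^n|$ and
\[
\log^+|P| \le \log^+\left|\frac{Q}{f(z)^n}\right| + \log^+\left|\frac{f(z)^n}{U_0}\right| + O(1).
\]
The first term is handled as in the $E_1$ case, using $\deg_{f}(Q) \le n$, while the second reduces to reciprocal shift quotients; the bounded complement $E_2 \setminus E_2'$ is absorbed by the $E_1$ argument with an $O(1)$ correction.

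The main obstacle will be the $E_2$ estimate: one must verify that, after dividing by $f(z)^n$, the ratio $(U - U_0)/U_0$ remains uniformly bounded on $E_2'$. This is precisely where the hypothesis that $U$ contains a \emph{unique} term of maximal total degree is decisive, since otherwise destructive cancellation among top-degree monomials could keep $|U|$ small even when $|f|$ is large and wreck the comparison $|U/f(z)^n| \asymp |U_0/f(z)^n|$. Once this comparison is secured and all proximity contributions above are summed, the conclusion $m(r, P(z, f(z))) = S(r, f(z))$ outside a set of zero upper density measure follows from a final appeal to Lemma \ref{L3}.
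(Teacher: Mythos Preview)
Your overall strategy matches the paper's, which simply instructs one to take the proof of \cite[Lemma~3.2]{bib23} and substitute Lemma~\ref{L3} for the hyper-order $<1$ difference logarithmic derivative lemma. But your $E_2$ estimate has a real gap. You assert that on $E_2'=\{|f(z)|>M\}$ the non-leading part of $U/f(z)^n$ is uniformly dominated by $U_0/f(z)^n$, so that $(U-U_0)/U_0$ stays pointwise bounded. This fails as soon as $U$ contains genuine shifts: a lower-degree monomial divided by $U_0$ is of the form $(b_\lambda/b_0)\,f(z)^{|\lambda|-n}\prod_j (f(z+c_j)/f(z))^{\lambda_j-\lambda_{0,j}}$, and while $|f(z)|^{|\lambda|-n}$ is small on $E_2'$, the shift quotients are \emph{not} pointwise bounded---Lemma~\ref{L3} only controls their proximity function. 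No threshold $M$ gives $|U/f^n|\asymp|U_0/f^n|$ uniformly. The unique-top-term hypothesis is indeed the decisive ingredient, but not for the reason you give: its true role is to make $f(z)^n/U_0$ a pure product of $1/b_0$ with reciprocal shift quotients, so that $m(r,f^n/U_0)=S(r,f)$.

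The standard repair is a further split of $E_2$. On the subset where $|U_0|>2|U-U_0|$ your inequality $\log^+|P|\le\log^+|Q/f^n|+\log^+|f^n/U_0|+O(1)$ is legitimate. On the remaining set $E_3=\{|f|>1,\ |U_0|\le 2|U-U_0|\}$ one instead observes that
\[
\log^+|f(z)|\le\log 2+\log^+\Bigl|\frac{U-U_0}{f(z)^{n-1}}\Bigr|+\log^+\Bigl|\frac{f(z)^n}{U_0}\Bigr|,
\]
and each right-hand term integrates to $S(r,f)$ over $E_3$ (the middle one because $\deg_f(U-U_0)\le n-1$, the last by the single-monomial form of $U_0$). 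Hence $\int_{E_3}\log^+|f|\,d\theta=S(r,f)$, and then a direct monomial expansion of $P$ as in your $E_1$ argument---now with the $\log^+|f|$ contribution bounded by $S(r,f)$ rather than by zero---disposes of $E_3$.
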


Now we present an improved lemma that extends  Mohon'ko's theorem and its difference analogue to differential-difference equations with small meromorphic coefficients, where the solutions exhibit subnormal growth. The proof of Lemma \ref{L10} closely follows the proof of \cite[Lemma 2.1]{bib16}.

\begin{lemma}\label{L10}
Let $f(z)$ be a transcendental meromorphic solution of subnormal growth of the equation
\begin{flalign}\label{E16}
P(z,f(z))=0,
\end{flalign}
where $P(z,f(z))$ is a differential-difference polynomial defined by Definition \eqref{D3}.
If $P(z,a(z))\not\equiv 0$ and $a(z)$ is a small function of $f(z)$, then 
\begin{align*}
m\left ( r,\frac{1}{f(z)-a(z)} \right ) =S(r,f(z)).
\end{align*}
\end{lemma}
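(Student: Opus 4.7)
The plan is to adapt the Mohon'ko-style scheme of \cite[Lemma 2.1]{bib16} to the subnormal differential-difference setting. Set $g(z) = f(z) - a(z)$. Since $a$ is small with respect to $f$, $T(r, g) = T(r, f) + S(r, f)$, so $g$ is subnormal and $S(r, g) = S(r, f)$. Substituting $f = g + a$ into $P(z, f) = 0$ and expanding each factor $f^{(j)}(z + c_i) = g^{(j)}(z + c_i) + a^{(j)}(z + c_i)$ by the binomial theorem, I would group the resulting terms by the total degree $|\mu| = \sum_{i, j} \mu_{i, j}$ in the shifts and derivatives of $g$. The $|\mu| = 0$ piece is exactly $P(z, a(z))$, so the identity rearranges to
\begin{equation*}
-P(z, a(z)) = \sum_{|\mu| \geq 1} \tilde c_\mu(z) \prod_{i, j} g^{(j)}(z + c_i)^{\mu_{i, j}},
\end{equation*}
where each $\tilde c_\mu(z)$ is a polynomial in shifts and derivatives of $a$ over the coefficients $b_l(z)$, and is therefore a small function of $f$.

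Next, I would factor $g(z)^{|\mu|}$ from each monomial on the right and divide through by $g(z)$, obtaining
\begin{equation*}
\frac{-P(z, a(z))}{g(z)} = \sum_{|\mu| \geq 1} \tilde c_\mu(z)\, g(z)^{|\mu| - 1} \prod_{i, j} \left(\frac{g^{(j)}(z + c_i)}{g(z)}\right)^{\mu_{i, j}}.
\end{equation*}
The crucial step is to split the integral defining $m(r, 1/g)$ over the set $E_r = \{\theta \in [0, 2\pi) : |g(re^{i\theta})| \leq 1\}$, where the integrand $\log^+(1/|g|)$ is actually positive. On $E_r$ the non-negative power satisfies $|g(z)|^{|\mu| - 1} \leq 1$, so taking $\log^+$ of the displayed identity and integrating yields
\begin{equation*}
m\!\left(r, \tfrac{1}{g}\right) \leq m\!\left(r, \tfrac{1}{P(z, a)}\right) + \sum_{\mu} m(r, \tilde c_\mu) + \sum_{\mu, i, j} \mu_{i, j}\, m\!\left(r, \tfrac{g^{(j)}(z + c_i)}{g(z)}\right) + O(1).
\end{equation*}
The first two terms are $S(r, f)$ because $P(z, a) \not\equiv 0$ and every $\tilde c_\mu$ is small; for the third I would factor $g^{(j)}(z + c_i)/g(z) = [g^{(j)}(z + c_i)/g(z + c_i)] \cdot [g(z + c_i)/g(z)]$ and bound each factor by $S(r, g) = S(r, f)$ via Lemma \ref{L1} (applied to $g$ at the shifted point) and Lemma \ref{L3} (which applies since $g$ is subnormal).

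The main obstacle is to carry out the binomial rearrangement cleanly so that every surviving monomial on the right genuinely carries at least one factor of some shifted derivative $g^{(j)}(z + c_i)$; this is what makes the exponent $|\mu| - 1$ non-negative and lets the cut-off $|g| \leq 1$ really tame the stray factor $g(z)^{|\mu|-1}$. Once this structural step is in place, the remaining bookkeeping, namely the union of the finitely many exceptional sets of finite logarithmic or zero upper density measure arising from Lemmas \ref{L1} and \ref{L3}, is routine and gives $m(r, 1/(f - a)) = m(r, 1/g) = S(r, f(z))$.
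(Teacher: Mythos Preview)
Your proposal is correct and is precisely the approach the paper intends: the paper does not spell out a proof of Lemma~\ref{L10} but states that it ``closely follows the proof of \cite[Lemma 2.1]{bib16}'', i.e.\ the Mohon'ko-type substitution $f=g+a$, isolation of the constant term $P(z,a)$, division by $g$, and estimation of the logarithmic-derivative and shift quotients via Lemmas~\ref{L1} and~\ref{L3}. Your identification of the only delicate point---that after the binomial expansion every surviving monomial carries total $g$-degree $|\mu|\geq 1$, so the factor $g^{|\mu|-1}$ is harmless on $\{|g|\le 1\}$---is exactly the crux of that argument.
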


\begin{lemma}\label{L21} \cite{bib36} Let $f$ be defined by \eqref{E691}. Then    \begin{align*}T(r,f)=C(\operatorname{co}(W_{0}))\frac{r^{q}}{2\pi} +o(r^{q}).\end{align*}
Additionally, if $H_0(z)\not\equiv 0,$ we have $$m\left ( r,\frac{1}{f} \right )=o(r^{q} ),$$
while if $H_{0}(z)\equiv 0,$ then $$N\left(r,\frac{1}{f}\right)=C(\operatorname{co}(W)) \frac{r^{q}}{2\pi} +o(r^{q}).$$ 
\end{lemma}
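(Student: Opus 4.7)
The plan is to reduce everything to a careful asymptotic analysis of $f(re^{i\theta})$ sector by sector, using the Phragmén--Lindelöf indicator function of $f$. Write $L_j(\theta)=\operatorname{Re}(\omega_j e^{iq\theta})$ for $j=1,\dots,m$ and $L_0(\theta)\equiv 0$, so that the indicator of $f$ equals $h(\theta)=\max_{0\le j\le m}L_j(\theta)$ when $H_0\not\equiv 0$, and $h(\theta)=\max_{1\le j\le m}L_j(\theta)$ when $H_0\equiv 0$. In either case the unit circle splits into finitely many arcs on which one of the $L_j$ is strictly dominant (apart from finitely many transition angles), and on each such arc a standard computation gives
\[
\log|f(re^{i\theta})|=L_{j}(\theta)\,r^{q}+O(\log r)
\]
uniformly in $\theta$ restricted to any closed sub-arc of strict dominance. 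This is the main analytic input; everything else is bookkeeping.

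For the first assertion, since $f$ is entire, $T(r,f)=m(r,f)=\tfrac{1}{2\pi}\int_0^{2\pi}\log^+|f(re^{i\theta})|\,d\theta$. Splitting the integral according to the arcs of dominance and using the sectorial asymptotics, one gets
\[
T(r,f)=\frac{r^{q}}{2\pi}\int_{0}^{2\pi}\max\{0,h(\theta)\}\,d\theta+o(r^{q}).
\]
The geometric identity I would invoke is the classical fact that for a compact convex set $K\subset\mathbb{C}$ the perimeter satisfies $C(K)=\int_{0}^{2\pi}H_K(\theta)\,d\theta$, where $H_K$ is the support function. Applied to $K=\operatorname{co}(W_0)$ (which contains $0$, so $H_K\ge 0$) this yields $T(r,f)=C(\operatorname{co}(W_0))r^{q}/(2\pi)+o(r^q)$, and applied to $K=\operatorname{co}(W)$ (together with the proximity/counting split in the next step) it will give the claim on $N(r,1/f)$.

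For $m(r,1/f)$ with $H_0\not\equiv 0$: in every sector of dominance of some $L_j$ with $j\ge 1$, $|f|$ is exponentially large in $r^q$, so $\log^+|1/f|$ contributes at most $o(r^q)$ there. On arcs where $L_0=0$ is dominant, $f\sim H_0(z)$, which is a polynomial and therefore satisfies $\log^+|1/f|=O(\log r)$ away from its zeros; the standard Boutroux--Cartan or small-arc estimate handles neighbourhoods of the zero set of $H_0$. Summing over sectors gives $m(r,1/f)=o(r^q)$, and the first Nevanlinna theorem applied to $1/f$ then gives $N(r,1/f)=T(r,f)-m(r,1/f)+O(1)$, which under $H_0\equiv 0$ must match the perimeter formula. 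So for the last assertion the task reduces to showing $m(r,1/f)=o(r^q)$ also in the case $H_0\equiv 0$: factor out the exponential of largest real part in each sector, write $f=H_{j_0}(z)e^{\omega_{j_0}z^q}(1+g(z))$ with $g(z)$ exponentially small, and argue as before, noting that the constant $0$ is simply no longer among the "competing'' terms so the relevant convex hull is $\operatorname{co}(W)$.

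The main obstacle, and the place I would spend the most effort, is making the sector decomposition uniform near the transition rays where two or more $L_j$ tie. Near those rays one must either (i) absorb them into an exceptional set of angular measure $o(1)$ whose contribution to the integrals is $o(r^q)$, or (ii) do a finer expansion distinguishing whether the tying coefficients $H_j$ cancel or reinforce. The geometric content —  that what survives integration is exactly the support-function integral of the convex hull — is clean, but the analytic work of controlling $\log^+|f|$ and $\log^+|1/f|$ uniformly across these transition angles, and of handling possible zeros of the finite-degree factors $H_j(z)$, is where the technical care goes. Once these uniform estimates are in place, the three asymptotic formulas fall out of the same geometric identity applied to the two convex hulls $\operatorname{co}(W_0)$ and $\operatorname{co}(W)$.
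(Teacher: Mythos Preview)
The paper does not supply a proof of this lemma; it is quoted verbatim from Steinmetz \cite{bib36} and used as a black box. So there is no ``paper's own proof'' to compare against. Your sketch follows the classical route (indicator function, sectorial dominance of one exponential term, Cauchy--Crofton/support-function identity for the perimeter of a convex polygon), which is indeed how the original result is established.

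One slip worth flagging: you write that on arcs where $L_0=0$ dominates, ``$f\sim H_0(z)$, which is a polynomial and therefore satisfies $\log^+|1/f|=O(\log r)$ away from its zeros.'' In the setting of \eqref{E691} the coefficients $H_j$ are allowed to be exponential polynomials of degree strictly less than $q$, not merely ordinary polynomials, so $H_0$ need not be a polynomial. The conclusion you want still holds, since $T(r,H_0)=o(r^q)$ forces $m(r,1/H_0)=o(r^q)$, but the stated reason is wrong. The same caveat applies when you factor $f=H_{j_0}(z)e^{\omega_{j_0}z^q}(1+g(z))$ in the $H_0\equiv 0$ case: $H_{j_0}$ may itself be a lower-order exponential polynomial, and handling $\log^+|1/H_{j_0}|$ near its zeros requires the order bound rather than a polynomial argument. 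With this correction your outline is sound.
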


\begin{remark}\label{r1}
\cite[remark 5]{bib33} Suppose that $f$ is an exponential polynomial  of form \eqref{E691} with $m\ge 1$. Then, we have following fact: 
\begin{align*}
S(r,f)=o(r^{q}), o(r^{q})= S(r,f).  
\end{align*}
\end{remark}

\section{Proof of Theorem \ref{T1}}\label{sec5}
We start with two important lemmas that are used to prove  
Theorem \ref{T1}.
\begin{lemma}\label{L11}
Let $f(z)$ be a transcendental meromorphic solution of \eqref{E5}, where all coefficient functions of $R(z,f(z))$ and $a(z)$ are small functions of $f(z).$ If $f(z)$ is of subnormal growth, then we have $$N\left ( r,\frac{1}{Q(z,f(z))}\right)\le N(r,f(z))+S(r,f(z)).$$ 
\end{lemma}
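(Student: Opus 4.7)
The plan is to rearrange \eqref{E5} into the identity
\[
Q(z,f(z))\bigl[f'(z)-a(z)f(z+1)\bigr] = P(z,f(z))
\]
and read off the constraints this places at each zero of $Q(z,f(z))$. I would factor $Q(z,w)=q(z)\prod_{j}(w-\alpha_{j}(z))^{m_{j}}$ over the field of small meromorphic functions of $f$, where each $\alpha_{j}$ is a small meromorphic function. Away from the zeros of $q$, the poles of the $\alpha_{j}$, and the poles of the coefficients of $a$ and $P$---all of which together contribute only $S(r,f(z))$ to any counting function---every zero $z_{0}$ of $Q(z,f(z))$ must satisfy $f(z_{0})=\alpha_{j}(z_{0})$ for some $j$. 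In particular, $f(z_{0})$ is finite, so both $f$ and $f'$ are holomorphic at $z_{0}$.

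Next, I would exploit the coprimality of $P$ and $Q$ as polynomials in $w$. The resultant $\mathrm{Res}_{w}(P,Q)$ is then a nonvanishing small meromorphic function, so $P(z,\alpha_{j}(z))\not\equiv 0$ for each $j$, and the zeros of each such small function form a set whose counting function is $S(r,f(z))$. Outside this exceptional set, the displayed identity forces $f'(z)-a(z)f(z+1)$ to have a pole at $z_{0}$ of multiplicity equal to that of the zero of $Q(z,f(z))$. Since $f'$ is holomorphic at $z_{0}$ and $a$ is (generically) holomorphic and finite there, the pole can only originate in $f(z+1)$; consequently $f$ has a pole at $z_{0}+1$ of at least that multiplicity.

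Summing this multiplicity-preserving assignment $z_{0}\mapsto z_{0}+1$ over such zeros yields
\[
N\!\left(r,\frac{1}{Q(z,f(z))}\right)\le N(r,f(z+1))+S(r,f(z)),
\]
after which the subnormal shift-invariance $N(r,f(z+1))=N(r,f(z))+S(r,f(z))$, recorded immediately after Lemma \ref{L3}, gives the conclusion. The main technical obstacle is purely bookkeeping: every exceptional contribution---zeros of $q$ and of $\mathrm{Res}_{w}(P,Q)$, poles of $a$ and of the $\alpha_{j}$, and zeros of $P(z,\alpha_{j}(z))$---must be shown to be absorbed into $S(r,f(z))$. This step is decisive and uses the hypothesis that all coefficient functions of $P$, $Q$, and $a$ are small with respect to $f$, so that each of these sets has counting function of order $S(r,f(z))$.
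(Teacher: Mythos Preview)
Your core idea matches the paper's: zeros of $Q(z,f(z))$ generically occur where $f$ is finite, and there the equation forces $f(z+1)$ to have a pole of at least the same multiplicity; shift invariance then gives the bound. One technical stumble: the factorization $Q(z,w)=q(z)\prod_j(w-\alpha_j(z))^{m_j}$ with each $\alpha_j$ a small \emph{meromorphic} function need not exist, since the field of small meromorphic functions of $f$ is not algebraically closed (consider $Q(z,w)=w^2-g(z)$ for a small $g$ with no small meromorphic square root). The factorization is in any case unnecessary: you can argue directly from the resultant, observing that if $Q(z_0,f(z_0))=0$ with $f(z_0)$ finite and $\mathrm{Res}_w(P,Q)(z_0)\neq 0$, then automatically $P(z_0,f(z_0))\neq 0$, and the zeros of the nonzero small function $\mathrm{Res}_w(P,Q)$ contribute only $S(r,f)$. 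You should also record that $a(z_0)\neq 0$ is needed to pass the pole from $a(z)f(z+1)$ to $f(z+1)$; zeros of the small function $a$ again cost only $S(r,f)$.

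The paper's presentation sidesteps both the factorization and the explicit resultant by partitioning into $A=\{z:f(z)=\infty\}$ and $B=\{z:f(z)\neq\infty\}$ and working with the restricted counting function $N_B$. From $R=P/Q$ (coprimality used implicitly) one has $N_B(r,R(z,f))=N\!\left(r,1/Q(z,f)\right)+S(r,f)$, while the equation gives
\[
N_B(r,R)=N_B\bigl(r,\,f'-a\,f(\cdot+1)\bigr)\le N_B(r,f')+N_B\bigl(r,f(\cdot+1)\bigr)+S(r,f)\le N\bigl(r,f(\cdot+1)\bigr)+S(r,f),
\]
since $f'$ is finite on $B$. This is the same argument as yours, packaged globally rather than pointwise.
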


\begin{proof}
Define \begin{align}\label{E795} 
A=\left \{ z:f(z)=\infty  \right \},B=\left \{ z:f(z)\not =\infty\right \}. 
\end{align}
By \eqref{E5}, we have
\begin{align}\label{E796}
&N(r,R(z,f(z)))\\\nonumber
=&\max\left \{\deg_{f}(P)-\deg_{f}(Q),0\right\}N(r,f(z))+N\left ( r,\frac{1}{Q(z,f(z))}\right)+S(r,f(z)).
\end{align}  
Using \eqref{E795} and \eqref{E796}, it follows that \begin{align}\label{6}
N_{B}(r,R(z,f(z)))=N\left ( r,\frac{1}{Q(z,f(z))}\right)+S(r,f(z)).
\end{align} 
On the other hand, by applying  \eqref{E795}  and Lemma \ref{L2}  to \eqref{E5}, we have 
\begin{align}\label{E98}
N_{B}(r,R(z,f(z)))=& N_{B}(r,{f}'(z)-a(z)f(z+1))\\\nonumber
\le& N_{B}(r,{f}'(z))+N_{B}(r,{f}(z+1)) +S(r,f(z))\\\nonumber  
\le& N(r,f(z+1))+S(r,f(z))\\\nonumber
=&N(r,f(z))+S(r,f(z)).
\end{align}
By combining \eqref{6} and \eqref{E98}, we obtain $$N\left ( r,\frac{1}{Q(z,f(z))}\right)\le N(r,f(z))+S(r,f(z)).$$ 
\end{proof}

\begin{remark}
By applying  Lemma 
\ref{L10} to \eqref{E15}, we have \begin{align}\label{E102}
N\left(r,\frac{1}{f(z)-b_1(z)}\right) & = T(r,f(z))+S(r,f(z)).
\end{align} 
Note also that all coefficient functions of \eqref{E15} are rational functions. Hence, we can assume that $z=\hat{z} $ is zero of $f(z)-b_1(z),$ but $z=\hat{z} $ is neither a zero or a pole of any coefficient function of \eqref{E15}. 
We say that $z_{0}$ is a $\mathit{generic}$ $\mathit{zero}$ of $f(z)-b_1(z)$ when above requirements are satisfied. Throughout the rest of this paper, we similarly define the terms generic zero and generic pole of $f(z).$\par
We cannot deduce by \eqref{E102} that the generic zero of $f(z)-b_1(z)$  must exist if all coefficients of \eqref{E15} are small functions that are non-rational of \(z\). Indeed, there exist the case where \(z=z_0\) is an arbitrary  zero of order \(t\) of 
\(f(z)-b_1(z)\), and is zero or pole of order \(t_1\) of arbitrary coefficient of \eqref{E15},  \(t>t_1\).
\end{remark}

\begin{lemma}\label{L7}
Let $f(z)$ be a transcendental meromorphic solution of
\begin{align}\label{E15}
f'(z)=a(z)f(z+1)+R(z,f(z)), R(z,f(z))=
\frac{P(z,f(z))}{(f(z)-b_{1}(z))^{k}\widetilde{Q} (z,f(z))}. 
\end{align}
where $a(z)$ is a nonzero rational function,
$R(z,f(z))$ is rational in both arguments, and all coefficient functions of $R(z,f(z))$ are rational functions. Let
$k$ be an integer greater than $1.$ Furthermore, assume that \( P(z, f(z)) \), \( f(z) - b_{1}(z) \), and \( \widetilde{Q}(z, f(z)) \) are pairwise relatively prime. Then 
\begin{align*}
\limsup_{r\to \infty}\frac{\log  T(r,f)}{r}>0.
\end{align*}
\end{lemma}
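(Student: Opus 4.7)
The strategy is to reduce the claim to an application of Lemma \ref{L4}, with the key input being a generic pole-zero analysis at zeros of $f(z)-b_1(z)$. First I would rewrite \eqref{E15} as a differential-difference polynomial equation $\mathcal{P}(z,f)=0$ with
\[
\mathcal{P}(z,f)=\bigl[f'(z)-a(z)f(z+1)\bigr](f(z)-b_1(z))^{k}\widetilde{Q}(z,f(z))-P(z,f(z)).
\]
By the pairwise coprimality of $P$, $f-b_1$, and $\widetilde{Q}$, the identity $\mathcal{P}(z,b_1(z))=-P(z,b_1(z))\not\equiv 0$ holds, and moreover only finitely many points can be common zeros or poles of the rational coefficients appearing in $\mathcal{P}$; these excluded points form the set of non-generic points referred to in the preceding remark.

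Next I would analyze $f$ near a generic zero $z_0$ of $f(z)-b_1(z)$ of multiplicity $m$. Because $f$ is holomorphic at $z_0$, the derivative $f'(z)$ is bounded there. By coprimality and genericity, $P(z_0,b_1(z_0))\neq 0$ and $\widetilde{Q}(z_0,b_1(z_0))\neq 0$, so $R(z,f(z))$ has a pole of order exactly $km$ at $z_0$. Rearranging \eqref{E15} as $a(z)f(z+1)=f'(z)-R(z,f(z))$ and using that $a$ is a nonzero rational function, hence finite and nonzero at $z_0$ generically, I conclude that $f(z+1)$ has a pole of order $km$ at $z_0$, i.e., $f$ has a pole of order $km$ at $z_0+1$.

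Counting these contributions gives $k\cdot n(r,1/(f-b_1))\le n(r+1,f)+O(1)$, where the $O(1)$ absorbs the finitely many excluded non-generic points. Equivalently, with $\tau=1/k$, which satisfies $\tau\le 1/2<1$ since $k\ge 2$ by hypothesis, this is
\[
n\!\left(r,\tfrac{1}{f(z)-b_1(z)}\right)\le \tau\, n(r+1,f)+O(1).
\]
Applying Lemma \ref{L4} with a single small target function $a_1=b_1$, shift $s=1$, and the above $\tau$, I obtain $\limsup_{r\to\infty}\log T(r,f)/r>0$, as desired. The delicate step is the pole-zero counting: one must verify that the order of the pole of $R(z,f(z))$ at a generic zero of $f-b_1$ is exactly $km$ with no hidden cancellation, and that the failure of the generic property is confined to finitely many exceptional points. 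Both follow from the pairwise coprimality of $P$, $f-b_1$, $\widetilde{Q}$ together with the rationality of all coefficients of $R(z,f(z))$ and $a(z)$.
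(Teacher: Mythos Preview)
Your proof is correct and follows the paper's approach: analyze a generic zero of $f-b_1$, show it forces a high-order pole of $f$ at the shifted point, and invoke Lemma~\ref{L4}. The only difference is that the paper iterates the shift $m$ times to obtain $n\bigl(r,\tfrac{1}{f-b_1}\bigr)\le \tfrac{1}{m}\,n(r+m,f)+O(1)$ for any $m\ge 2$, whereas you stop after one step with $\tau=1/k\le 1/2$; since Lemma~\ref{L4} only requires some $\tau\in(0,1)$, your shorter argument already suffices.
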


\begin{proof}
Clearly, $b_{1}(z)$ is not a solution of equation \eqref{E15}, even if $b_{1}(z)\equiv 0$. Suppose that $f(z)-b_{1}(z)$ has a generic zero of order $t$ at $z=\hat{z}$ (the existence of the generic zeros of $f(z)-b_{1}$ can be established by Lemma \ref{L10} and the fact that that all coefficients of \eqref{E15} are rational). 
Then $f(z+1)$ has a pole of order $tk$ at $z=\hat{z}.$

If $\deg_{f}(P)>k+\deg_{f} (\widetilde{Q} ),$ shifting \eqref{E15} by one step yields that  $f(z+2)$ has a pole of order at least $tk+1$ at $z=\hat{z}.$ Continuing to shift \eqref{E15} further, we obtain an infinite iterative sequence $\left \{ f(\hat{z}+n) \right \}_{n=1}^{\infty},$ where  $f(\hat{z}+n)=\infty $ for all  positive integers $n.$ In addition,  $f(z+n)$ has a pole of order at least $tk+n-1$ at $z=\hat{z}.$ 

Fix a positive integer $m$ $(\ge 2).$
By computing the ratio of zeros of $f(z)-b_1(z)$ to the poles of $f(z)$ in the set $\left \{ \hat{z}, \hat{z}+1,  \hat{z}+2,\dots , \hat{z}+m \right \},$ we obtain 
\begin{equation}\label{E701}
n\left ( r,\frac{1}{f(z)-b_{1}(z) } \right ) \le \frac{1}{m} n(r+m,f(z))+O(1).
\end{equation}

Similarly, if $\deg_{f}(P)\le k+\deg_{f}(\tilde{Q})$, we find  that \eqref{E701} is still valid.
Applying  Lemma \ref{L4} to \eqref{E701}, we conclude that 
$\limsup_{r\to \infty}\frac{{\rm log}T (r,f)}{r}>0.$\end{proof}

\begin{proof}[Proof of Theorem 2.1]

\textbf{Step 1:} We first  assert that $\deg_{f}(Q)\le 1.$ Suppose, on the contrary, that 
   $\deg_{f}(Q)\ge 2.$ 
Applying Lemma \ref{L7} to \eqref{E5}, we obtain
  
\begin{align}\label{E91}
{f}'(z)=a(z)f(z+1)+\frac{P(z,f(z))}{(f(z)-b_{1}(z))(f(z)-b_{2}(z))\hat{Q}(z,f(z))},  
\end{align}
where either $\deg_{f}(\hat{Q})=0$ or
$\hat{Q}(z,f(z))$ is polynomial with respect to $f(z)$ and $z,$ with rational coefficients, and $b_{1}(z)\not\equiv b_{2}(z).$   

Using Lemma \ref{L10}, we find that  \begin{align}\label{E92}
N\left ( r,\frac{1}{f(z)-b_{1}(z)}\right)=T(r,f(z))+S(r,f(z)), 
\end{align}
and \begin{align}\label{E93}
N\left ( r,\frac{1}{f(z)-b_{2}(z)}\right)=T(r,f(z))+S(r,f(z)). 
\end{align}
Note also that all coefficient functions of \eqref{E5} are rational functions. Hence the  generic zeros of $f(z)-b_{j}(z)$ $(1\le j\le 2)$ must exist (the existence of the generic zeros of $f(z)-b_{j}$ $(1\leq j \leq 2)$ can be established by Lemma \ref{L10} and the fact that that all coefficients of \eqref{E5} are rational).

Now assume that $z_{0}$ is a generic zero of $f(z)-b_1(z)$ or $f(z)-b_2(z).$
We assert that there  does not exist $z_{0}\in \mathbb{C}$ such that either $f(z_{0})-b_{1}(z_{0})=0, f(z_{0})-b_{2}(z_{0})=\infty$ or $f(z_{0})-b_{1}(z_{0})=\infty , f(z_{0})-b_{2}(z_{0})=0.$ Otherwise, we get $b_{1}(z_{0})-b_{2}(z_{0})=\infty,$ which implies that $z=z_{0}$ is a pole of $b_{1}(z)$ or $b_{2}(z).$ This is a contradiction with $z_{0} $ being a generic zero of $f(z)-b_1(z)$ or of $f(z)-b_2(z)$.  

Thus, we obtain the  following:\begin{align}\label{E94}
&N\left ( r, \frac{1}{Q(z,f(z))} \right )\\\nonumber 
=& N\left ( r,\frac{1}{(f(z)-b_{1}(z))(f(z)-b_{2}(z))\hat{Q}(z,f(z))}\right)\\\nonumber
\ge& N\left ( r,\frac{1}{(f(z)-b_{1}(z))(f(z)-b_{2}(z))}\right)+S(r,f(z))\\\nonumber
=&N\left (r,\frac{1}{f(z)-b_{1}(z)}\right)+N\left (r,\frac{1}{f(z)-b_{1}(z)}\right)+S(r,f(z)).\\\nonumber  
\end{align}
Combining \eqref{E92},  \eqref{E93}, and \eqref{E94} results in \begin{align}\label{E95}
N\left (r,\frac{1}{Q(z,f(z))}\right) \ge 2T(r,f(z))+S(r,f(z)).
\end{align}
On the other hand, \eqref{E95} contradicts Lemma \ref{L11}. Therefore, we conclude that  
$\deg_{f}(Q)\le 1$ is proved.

\textbf{Step 2:}
Next, we give an estimate for $\deg_{f}(R).$  
By
taking the characteristic function of the both sides of \eqref{E5} and applying Lemma \ref{L2}, \cite[Theorem 2.2.5.]{bib27}, we get 
\begin{align}\label{E694} 
\max\left \{ \deg_{f}(P),\deg_{f}(Q)\right \}T(r,f)
=&T(r,R(z,f(z)))\\\nonumber
=&T(r,{f}'(z)-a(z)f(z+1))\\\nonumber
\le& T(r,{f}'(z))+T(r,f(z+1))+S(r,f(z))\\\nonumber
\le& 3T(r,f(z))+S(r,f(z)),
\end{align}
which  immediately yields 
$$\max \left \{ \deg_{f}(P),\deg_{f}(Q)\right \}\le 3.$$

\textbf{Step 3:} By combining step 1 and step 2, we find that
$\deg_{f}(P)\le 3$ and $\deg_{f}(Q)\le 1    .$ We now classify cases based on $\deg_{f}(P)$ and $\deg_{f}(Q).$ 

\textbf{Case 1.} We show that $\deg_{f}(P)=3$ and $\deg_{f}(Q)=0$ cannot occur. Suppose, on the contrary, that $\deg_{f}(P)=3$ and $\deg_{f}(Q)=0$. Then \eqref{E5} reduce to \begin{eqnarray}\label{E87}
{f}'(z)=a(z)f(z+1)+l_{0}(z)f(z)^{3}+l_{1}(z)f(z)^{2}+l_{2}(z)f(z)+l_{3}(z), 
\end{eqnarray}
where $P(z,f(z))=l_{0}(z)f(z)^{3}+l_{1}(z)f(z)^{2}+l_{2}(z)f(z)+l_{3}(z),$ and
$l_{0}(z)\not\equiv0.$  

Suppose $z_{0}$ is arbitrary generic pole of order $t$ of $f(z),$ i.e., $f(z_{0})=\infty ^{k}.$ Then ${f}'(z_{0})=\infty ^{k+1},$   $P(z_{0},f(z_{0}))=\infty ^{3k},$ and   $f(z_{0}+1)=\infty ^{3k}$. Since $z_{0}$ was chosen as an arbitrary generic pole of order $t$ of $f(z)$, applying Lemma \ref{L2} gives
\begin{align}\label{8}
3N(r,f(z))+S(r,f(z))=N(r,P(z,f(z)))=&N(r,f(z+1))+S(r,f(z))\\\nonumber
=&N(r,f(z))+S(r,f(z)),
\end{align}
which implies\begin{align}\label{E88}
N(r,f(z))=S(r,f(z)).
\end{align}  Suppose there is no generic poles of $f(z),$ then we still have \eqref{E88}.\par

Note that \eqref{E87} can be written as 
\begin{align}\label{E89}
{f}'(z)-a(z)f(z+1)-l_{3}(z)=f(z)(l_{0}(z)f(z)^{2}+l_{1}(z)f(z)+l_{2}(z)).  
\end{align}
Applying Lemma \ref{L8} to \eqref{E89}, we get \begin{align}\label{E90}
m(r,f(z))=S(r,f(z)).
\end{align}
Combining \eqref{E88} and \eqref{E90}, it follows that 
$T(r,f(z))=S(r,f(z)),$ which yields a contradiction. The proof of Case $1$ is  complete.

\textbf{Case 2.}
We assert  that $\deg_{f}(P)=0,\deg_{f}(Q)=1$ cannot occur. Suppose, on the contrary, that  $\deg_{f}(P)=0$ and $\deg_{f}(Q)=1.$
Then \eqref{E5} becomes \begin{align}\label{E390}
{f}'(z)=a(z)f(z+1)+\frac{a_{0}(z)}{f(z)-b_{1}(z)},\quad a_{0}(z)\not\equiv 0.    
\end{align}
Let $z=z_0$ be a generic zero of $f(z)-b_1(z)$ with multiplicity $t.$ Then $f(z_0+1)=\infty^{t}.$ Shifting \eqref{E390} up gives $f(z_0+2)=\infty ^{t+1}.$ Repeating this, 
we obtain an infinite iterative sequence $\left \{ f(z_{0}+n) \right \}_{n=1}^{\infty} $ where $f(z_{0}+n)=\infty $ for all positive integers $n.$ 

Fix a positive inter $m$. It is easy to verify that $$\frac{n\left ( r,\frac{1}{f(z)-b_1(z)} \right)}{n(r+m,f(z))}\le \frac{1}{m}.$$
Applying Lemma \ref{L4},
this contradicts  $$\limsup_{r \to \infty}\frac{\log T(r,f)}{r}=0.$$
Therefore, our assertion  is  confirmed.

\textbf{Case 3.}
Assume now that the cases 
$\deg_{f}(P)=1$ and $\deg_{f}(Q)=1,$ as well as $\deg_{f}(P)=2$ and $\deg_{f}(Q)=1,$ can occur. These situations are very similar to Case $2.$ We only need to consider the ratio of the zeros of $f(z)-b_1(z)$ to the poles of $f(z)$ and apply Lemma \ref{L4}  to obtain contradictions with $f(z)$ having subnormal growth. 
Details are omitted. 

Combining Step $1$, $2$, and $3$ completes the proof of  Theorem \ref{T1}. 
\end{proof}

\section{Proof of Theorem \ref{T3}}\label{sec6}
\begin{proof}
We present a crucial lemma that is utilized to prove Theorem \ref{T3}.
\begin{lemma}\cite[Lemma 3.1]{bib19}\label{L14}
Let $f(z)$ be an admissible meromorphic solution of \eqref{E5}   
 with more than $S(r,f(z))$ poles (counting multiplicities) and let $z_{j}$ denote the zeros and poles of the coefficients $a_{i}$ of $R(z,f(z)).$ Let  \begin{align*}
 m_{j}:=\underset{i=1,\dots,n}{\max}\left\{l_{i}\in \mathbb{N}:a_{i}(z_{j})=0^{l_{i}},\text{or}, a_{i}(z_{j})=\infty^{l_{i}} \right\}     
\end{align*}
be the maximal order of zeros and poles of the functions $a_{i}$ at $z_{j}.$ Then for any $\epsilon>0$ there are at most $S(r,f)$ points $z_{j}$ such that \begin{align}\label{N56}
f(z_{j}) & = \infty^{k_{j}} 
\end{align}
where $m_{j}\ge \epsilon k_{j}. $
\end{lemma}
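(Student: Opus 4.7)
The plan is to exploit the admissibility of the solution: each coefficient $a_i$ of $R(z,f)$ is a small function of $f$, so the total multiplicity of the zeros and poles of the $a_i$ taken together is controlled by $S(r,f)$, and this ``budget'' is what limits the number of problematic points $z_j$. The condition $m_j\ge\epsilon k_j$ is precisely what forces $k_j$ to be small relative to $m_j$, which lets us trade a bound on the weighted sum of the $m_j$ for a bound on the number of such $z_j$.

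First I would establish a summation bound on the $m_j$. For each $z_j$ and each $i$, let $l_i^{(j)}$ denote the order of the zero or pole of $a_i$ at $z_j$, with $l_i^{(j)}=0$ if $a_i(z_j)\notin\{0,\infty\}$. Since $m_j=\max_i l_i^{(j)}$, we have the crude estimate $m_j\le\sum_i l_i^{(j)}$. Summing over all $z_j$ with $|z_j|\le r$ and interchanging the order of summation gives
\[
\sum_{|z_j|\le r} m_j \;\le\; \sum_i \bigl(n(r,a_i)+n(r,1/a_i)\bigr).
\]
Passing to integrated counting functions in the standard way and using $T(r,a_i)=S(r,f)$ for every $i$, the weighted (by $m_j$) integrated counting function of the points $z_j$ is $S(r,f)$.

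Next, define $J(r):=\{j:|z_j|\le r,\ f(z_j)=\infty^{k_j},\ m_j\ge\epsilon k_j\}$. The defining inequality rearranges to $k_j\le m_j/\epsilon$, and since each $k_j\ge 1$ we obtain
\[
\#J(r) \;\le\; \sum_{j\in J(r)} k_j \;\le\; \frac{1}{\epsilon}\sum_{j\in J(r)} m_j \;\le\; \frac{1}{\epsilon}\sum_{|z_j|\le r} m_j.
\]
Integrating as before and noting that $\epsilon$ is a fixed positive constant, the right-hand side is $S(r,f)/\epsilon=S(r,f)$, which gives the desired conclusion.

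I expect the main obstacle to be purely technical bookkeeping rather than any deep idea: carefully specifying what ``at most $S(r,f)$ points'' means (the natural reading is that the integrated counting function of these exceptional $z_j$ is $S(r,f)$) and cleanly passing between the unintegrated $n(r,\cdot)$ form of each inequality and its integrated $N(r,\cdot)$ form via the standard Jensen-type averaging. The hypothesis that $f$ has more than $S(r,f)$ poles counted with multiplicity is implicit in guaranteeing that the conclusion is non-vacuous, since $S(r,f)$ is then strictly of lower order than the pole-counting data of $f$ itself.
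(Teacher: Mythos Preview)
Your argument is correct and is essentially the standard proof of this statement. Note, however, that the paper does not give its own proof of this lemma: it is quoted verbatim as \cite[Lemma~3.1]{bib19} and used as a black box in the proof of Theorem~\ref{T3}. The original proof in \cite{bib19} follows exactly the line you describe: bound the total multiplicity $\sum_{|z_j|\le r} m_j$ by $\sum_i\bigl(n(r,a_i)+n(r,1/a_i)\bigr)$, integrate to get an $S(r,f)$ bound via the first main theorem and the smallness of each $a_i$, and then observe that on the exceptional set $k_j\le m_j/\epsilon$ with $k_j\ge 1$, so the unintegrated count of exceptional points is dominated by $\epsilon^{-1}\sum m_j$. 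Your remark that the hypothesis ``more than $S(r,f)$ poles'' serves only to make the conclusion non-vacuous is also accurate.
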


(i). If $\deg_{f}(P)-\deg_{f}(Q)=2,$ then \eqref{E10} and \eqref{E61} can respectively take the following forms:
\begin{align}\label{E800} 
f(z)(b(z)f(z)+c(z))={f}'(z)-a(z)f(z+1)-d(z), 
\end{align}
\begin{align}\label{E801}
 &f(z)(a_{0}(z)f(z)^{2}+a_{1}(z)f(z)+a_{2}(z))\\\nonumber
=&\left [ ({f}'(z)-a(z)f(z+1))(f(z)-a_{4}(z))\right]-a_{3}(z).     
\end{align}
Using Lemma \ref{L8} to \eqref{E800} and \eqref{E801}, we see that \begin{align}\label{N20} 
N(r,f(z))=T(r,f(z))+S(r,f(z)),   
\end{align} that is to say, \eqref{N20} is valid
for \eqref{E10} and \eqref{E61}.

We now consider \eqref{E10}, \eqref{E61} and assert that the generic poles of \(f(z)\) exist in \eqref{E10}, \eqref{E61}, and 
$$N_{1}(r,f)=T(r,f)+S(r,f).$$  We firstly consider \eqref{E10}. We discuss two cases: 

\textbf{Case 1:} Suppose that $a(z)$ is rational function.

\textbf{Subcase 1.1:} All coefficients of $R(z,f(z))$ are rational functions. Since $f(z)$ is a transcendental meromorphic function and, as observed from \eqref{N20}, $f(z)$ has infinitely many poles, the generic poles of $f(z)$ must therefore exist in \eqref{E10}. We now assume that  $f(z_{0})=\infty ^{k}$ ($k>1$), where $z=z_{0}$ is an arbitrary generic pole of $f(z).$
Then ${f}'(z_{0})=\infty ^{k+1}$  and $R(z_{0},f(z_{0}))=\infty ^{2k}$ (since $\deg_{f}(P)-\deg_{f}(Q)=2$) in \eqref{E5}.
By \eqref{E10} we obtain $f(z_{0} +1)=\infty ^{2k}.$ 
By shifting \eqref{E5} up, we have
\begin{eqnarray}\label{11}
{f}'(z+1)=a(z+1)f(z+2)+R(z+1,f(z+1)),     
\end{eqnarray}
and by comparing the number of poles of both sides from identity \eqref{11}, it follows that   ${f}'(z_{0} +1)=\infty ^{2k+1}$, $R(z_{0} +1,f(z_{0} +1))=\infty ^{4k},$ $f(z_{0} +2)=\infty ^{4k}.$ By shifting \eqref{11} one more step, we get 
\begin{eqnarray*}
{f}'(z+2)=a(z+2)f(z+3)+R(z+2,f(z+2)).     
\end{eqnarray*}
Then ${f}'(z_{0} +2)=\infty ^{4k+1}$, $R(z_{0} +2,f(z_{1} +2))=\infty ^{8k},$ $f(z_{0} +3)=\infty ^{8k}.$ 
By repeating this process, we obtain $f(z_{0}+n)=\infty^{2^{n}k}$ for arbitrary positive $n,$ which implies that the growth of  $f(z)$ is not  subnormal.
Indeed, from \cite[p.97]{bib9} we have $$n(s,f)\leq \frac{r}{r-s}N(r,f)+O(1)\leq \frac{r}{r-s}T(r,f)+O(1)$$
when $r>s.$ Let $r=2\left|z_{0}\right| +2n, s=\frac{r}{2}=\left|z_{0}\right|+n,$ then we have following:
\begin{align}\label{N46}
\limsup_{r \to \infty}\frac{\log T(r,f)}{r}&\geq \limsup_{n \to \infty}\frac{\log\frac{1}{2}n(n+\left | z_{1} \right | ,f)}{2\left|z_{1}\right|+2n}\\\nonumber  
&\geq \limsup_{n \to \infty}\frac{\log \frac{1}{2}\left(2^{n}k \right)}{2\left|z_{1}\right|+2n}\\\nonumber 
&=\frac{\log 2}{2}\\\nonumber
&>0, 
\end{align}
which implies the growth of $f(z)$ does not satisfy the definition of subnormal. Thus, $k=1.$

\textbf{Subcase 1.2:} Not all coefficients of $R(z,f(z))$ are rational functions. In this case, suppose, on the contrary, that  \(f(z)\) has no generic poles.  Then if \(f(z_0)=\infty \), we have \(k(z_0)=0\) or \(k(z_0)=\infty \), where \(k(z)\) is an arbitrary coefficient of \eqref{E10}. Note that since all coefficients of \eqref{E10} are small functions of $f(z),$ it follows by  combining \eqref{N20} with Lemma \ref{L14}, that the order of the poles of $f(z)$ at $z=z_0$ is in general much greater than the order of the zeros or poles of $k(z)$ at $z=z_0$ and the error term is at most $S(r,f),$
  where $z=z_0$ is arbitrary pole of $f(z).$ In precise terms, there exists a set \begin{align}\label{N55}
 M=\left \{z_{0}: f(z_0)=\infty^{t_{0}}, k(z_{0})=0^{m},or, k(z_{0})=\infty ^{m},t_{0}\gg m  \right \}   \end{align}
 such that
\begin{align}\label{N31}
N_{M}(r,f(z))=T(r,f(z))+S(r,f(z)),   \end{align}  
where $t_{0}\gg m$
indicates that  for each fixed $\epsilon\in (0,1),$ there exists $z_{0}\in M$  such that $\epsilon t_{0}> m$  
in \eqref{N55}, and the term $N_{M}(r,f(z))$ refers to the counting function for the poles of $f(z)$ that belong to the set $M.$

Namely, by \eqref{E10} we have following:\begin{align}\label{N32}
 &f(z_0)=\infty^{t_{0}}, c(z_0)f(z_0)=\infty^{t_0+\epsilon_{2}}, f'(z_0)=\infty^{t_{0}+1}, b(z_0)f(z_0)^{2}=\infty^{2t_0+\epsilon_{1}},\\\nonumber  
 &d(z_{0})=\infty^{\epsilon_{3}},  
\end{align}
where if $\epsilon_{j}\ne 0$ $(j\in \mathbb{N}),$ $\epsilon_{1}$ denotes the order of the zeros or poles of $b(z)$ at $z=z_0,$ 
$\epsilon_{2}$ denotes the order of the zeros or poles of $c(z)$ at $z=z_0,$ and $\epsilon_{3}$ denotes the order of the zeros or poles of $d(z)$ at $z=z_0.$ 
Furthermore,  $\epsilon_{j}>0$ signifies the order of pole, while $\epsilon_{j}<0$ signifies the order of zero. Additionally, $\epsilon_{j}=0$ indicates that the coefficients are finite and nonzero at $z=z_{0}.$
Then by \eqref{N55} and \eqref{N31}, we have
\(\left|\epsilon_{1} \right|\), \(\left|\epsilon_{2} \right|\) and
\(\left|\epsilon_{2} \right|\) are given non-negative integers that satisfy  $t_{0}\gg\max\left\{\left|\epsilon_{1}\right|,\left|\epsilon_{2}\right|,\left|\epsilon_{3}\right|\right\},$  i.e., \begin{align}\label{E41} 
2t_{0}+\epsilon_{1}>\max\left\{t_{0}+1, t_{0}+\epsilon_{2}, t_{0},\epsilon_{3}\right\}.
\end{align}  In other words, 
 the zeros or poles of \(k(z)\) have slight influence on the order of pole of \(f(z)\).
Since \(a(z)\) is rational, all of its zeros and poles lie within a closed circle. Additionally, note  that $f(z)$ is a transcendental meromorphic function and \eqref{N20}. 
Then we can select a sequence $\left\{z_{0},z_{0}+1,z_{0}+2,\dots, \right \}$ such that the sequence lies outside of a sufficiently large circle, where $z=z_0$ is a pole of $f(z).$ Therefore, the zeros or poles of $a(z)$ do not influence the order of $f(z)$ if we select $z=z_{0}$ as the starting point for the iteration.
Hence by \eqref{E10}, \eqref{N32} and \eqref{E41}, we have  \(b(z_{0})f(z_0)^{2}=a(z_0)f(z_{0}+1)=f(z_{0}+1)=\infty^{2 t_{0}+\epsilon_{1}}\). By shifting \eqref{E10} up, 
we have 
\begin{align}\label{N33}
{f}'(z+1)=a(z+1)f(z+2)+b(z+1)f(z+1)^{2}+c(z+1)f(z+1)+d(z+1).    \end{align}
 Then \begin{align}\label{N34}
&f{'}(z_{0}+1)=\infty^{2t_0+\epsilon_{1}+1},b(z_{0}+1)f(z_{0}+1)^{2}=\infty^{4t_{0}+2\epsilon_{1}+\epsilon_{4}},\\\nonumber 
&c(z_{0}+1)f(z_{0}+1)=\infty^{2t_0+\epsilon_{1}+\epsilon_{5}}, d(z_{0}+1)=\infty^{\epsilon_{6}},  
\end{align}
where  $\epsilon_{4}$ denotes the order of the zeros or poles of $b(z+1)$ at $z=z_0,$ $\epsilon_{5}$ denotes the order of the zeros or poles of $c(z+1)$ at $z=z_0,$ $\epsilon_{6}$ denotes the order of the zeros or poles of $d(z+1)$ at $z=z_0.$ 
By Combining \eqref{N33} with \eqref{N34}, we have \begin{align}\label{N44}
f(z_{0}+2)=a(z_{0}+1)f(z_{0}+2)=b(z_{0}+1)f(z_{0}+1)^{2}=\infty^{4t_{0}+2\epsilon _{1}+\epsilon_{3}}.
\end{align}  
Since $\epsilon_{1},  \epsilon_{2}, \epsilon_{3}, \epsilon_{4}, 
\epsilon_{5},$
and $\epsilon_{6}$ 
are much smaller than $t_{0}.$ 
Shifting \eqref{N33} up, we have 
\begin{align}\label{N43}
{f}'(z+2)=a(z+2)f(z+3)+b(z+2)f(z+2)^{2}+c(z+2)f(z+2)+d(z+2).    
\end{align}
Then by \eqref{N44} we have following:
\begin{align}
&f{'}(z_{0}+2)=\infty^{4t_{0}+2\epsilon_{1}+\epsilon_{3}+1}, b(z_{0}+2)f(z_{0}+2)^{2}= \infty^{8t_{0}+4\epsilon_{1}+2\epsilon_{3}+\epsilon_{7}}\\\nonumber
&c(z_{0}+2)f(z_{0}+2)=\infty^{4t_{0}+2\epsilon_{1}+\epsilon_{3}+\epsilon_{8}},d(z_{0}+2)=\infty^{\epsilon_{9}},\\\nonumber      
\end{align}
where $\epsilon_{7}$ denotes the order of the zeros or poles of $b(z+2)$ at $z=z_0,$ $\epsilon_{8}$ denotes the order of the zeros or poles of $c(z+2)$ at $z=z_0,$ $\epsilon_{9}$ denotes the order of the zeros or poles of $d(z+2)$ at $z=z_0,$ 
Without loss of generality, we assume that $z=z_{0}$ is a simple zero of 
$R(z,f(z))$ and its shift, specifically, $\epsilon_{j}=-1$ $(j=1,2,\dots).$ If $\epsilon_{j}$ $(j=1,2,\dots)$ attain $0$ or a positive integer constant, then the order of pole of $f(z_{0}+n)$ at $z=z_{0}$ is higher.
Then \begin{align}\label{N45}
&f(z_{0})=\infty^{t_{0}}, f(z_{0}+1)=\infty^{2t_{0}-1}, f(z_{0}+2)=\infty^{4t_{0}-3},\\\nonumber
&a(z_{0}+2)f(z_{0}+3)=f(z_{0}+3)=b(z_{0}+2)f(z_{0}+2)^{2}= \infty^{8t_{0}+4\epsilon_{1}+2\epsilon_{3}+\epsilon_{7}}= \infty^{8t_{0}-7},
\end{align}
and by repeating the process above, we can eventually obtain that \begin{align}\label{N53}
f(z_{0}+n)=\infty^{2^{n}t_{0}-2^{n}+1}=\infty^{2^{n}(t_{0}-1)+1}.
\end{align}
By \eqref{N55} and \eqref{N31} we find that $t_{0}\gg 1.$ For \eqref{N53}
similar to the computation in \eqref{N46}, we can deduce a contradiction with $f(z)$ is subnormal growth. Thus, we assertion is proved, i.e., \(f(z)\) has generic poles for \eqref{E10}. 

Next, we prove every generic pole of $f(z)$ is simple. Suppose, for the sake of contradiction, that $f(z_{0})=\infty^{t_{0}},$ where $z=z_{0}$ is any generic pole of $f(z)$ and $t_{0}>1.$
From \eqref{E10} we have \begin{align}\label{N47}
&f(z_0)=\infty^{t_{0}}, c(z_0)f(z_0)=\infty^{t_0}, f'(z_0)=\infty^{t_{0}+1}, b(z_0)f(z_0)^{2}=\infty^{2t_0}.\\\nonumber   
&f(z_{0}+1)=b(z_0)f(z_0)^{2}=\infty^{2t_0}.
\end{align}
In this case, $a(z)$ is rational function, while $b(z),$ $c(z)$ and $d(z)$ are small functions of $f(z),$ which can be transcendental meromorphic functions. This implies that it is possible that if $z=z_{0}$ is a generic pole of $f(z)$ and if $z=z_{0}+1$ is also a pole of $f(z),$ then $z=z_{0}+1$ can be a non-generic pole of $f(z).$ Therefore, based on previous proof , we conclude that if we choose $z=z_0$ as the starting point for the iteration, then the zeros or poles of $a(z)$ do not cancel the poles of $f(z).$ However, we must also consider the zeros or poles of the coefficients when shifting \eqref{E10} upward.
By shifting \eqref{E10}, we have following:
\begin{align*}
{f}'(z+1)=a(z+1)f(z+2)+b(z+1)f(z+1)^{2}+c(z+1)f(z+1)+d(z+1).   
\end{align*}
 Then from \eqref{N47}, we have \begin{align}\label{N49}
&f{'}(z_{0}+1)=\infty^{2t_0+1}, c(z_{0}+1)f(z_{0}+1)=\infty^{2t_0+\varepsilon_{2}}, d(z_{0}+1)=\infty^{\varepsilon_{3}},\\\nonumber 
&f(z_{0}+2)=a(z_{0}+1)f(z_{0}+2)=b(z_{0}+1)f(z_{0}+1)^{2}=\infty^{4t_{0}+\varepsilon _{1}}.
\end{align}
By shifting \eqref{E10} upward again, we have  
\begin{align*}
{f}'(z+2)=a(z+2)f(z+3)+b(z+2)f(z+2)^{2}+c(z+2)f(z+2)+d(z+2).   
\end{align*}
By \eqref{N49} we obtain that \begin{align}\label{N50}
&f{'}(z_0+2)=\infty^{4t_{0}+\varepsilon_{1}+1}, b(z_{0}+2)f(z_{0}+2)^{2}=\infty^{8t_{0}+2\varepsilon_{1}+\varepsilon_{4}},\\\nonumber    
&c(z_{0}+2)f(z_{0}+2)=\infty^{4t_{0}+\varepsilon_{1}+\varepsilon_{5}}, d(z_{0}+2)=\infty^{\varepsilon_{6}},\\\nonumber
&f(z_{0}+3)=a(z_{0}+2)f(z_{0}+3)=b(z_{0}+2)f(z_{0}+2)^{2}=\infty^{8t_{0}+2\varepsilon_{1}+\varepsilon_{4}}.
\end{align}
As previously stated, the definitions of $\varepsilon_{j}$ $(j=1,\dots ,6)$ can refer \eqref{N32}.
By shifting \eqref{E10} more steps and note also that $\varepsilon_{j}$ $(j=1,\dots,6)$ is much smaller than $t_{0}.$ Without loss of generality, we assume that $\varepsilon_{j}=-1$ $(j=1,\dots).$ Then
\begin{align*}
&f(z_{0}+2)=\infty^{4t_{0}-1},f(z_{0}+3)=\infty^{8t_{0}-3}, \\\nonumber
&f(z_{0}+4)=\infty^{16t_{0}-7},\dots f(z_{0}+n)=\infty^{2^{n}t_{0}-2^{(n-1)}+1}.
\end{align*}
Similarly to \eqref{N46}, we can conclude that $f(z)$ does not satisfy the definition of subnormal growth.
Therefore, generic poles of $f(z)$ must exist.

Additionally, if $z=z_{0}$ is arbitrary generic pole of $f(z),$ suppose that $f(z_0)=\infty^{t}$ $(t>1).$ By \eqref{E10} we have $f(z_0+1)=\infty^{2t}.$ Shifting \eqref{E10} up, we have
\begin{equation}\label{N70}
 {f}' (z+1)=a(z+1)f(z+2)+b(z+1)f(z+1)^{2}+c(z+1)f(z+1)+d(z+1),    
\end{equation}
Then $f(z_{0}+2)=\infty^{4t}.$ By continuing to shift \eqref{N70} up, \begin{equation}\label{N71}
 {f}' (z+2)=a(z+2)f(z+3)+b(z+2)f(z+2)^{2}+c(z+2)f(z+2)+d(z+2),    
\end{equation}
we obtain that $f(z_{0}+3)=\infty^{8t}.$ If we repeat the process, it follows that $f(z_{0}+n)=\infty^{2^{n}} 
.$ By  \eqref{N46}, we have $f(z)$ does not exhibit  subnormal growth. Hence $t=1,$ i.e., all generic poles of $f(z)$ are simple poles. In the following case $2$, we can use exactly the same approach to prove that if the generic poles of $f(z)$ do exist then almost all generic poles of $f(z)$ must be simple and there are at most $S(r,f)$ points $z_{1}$ such that $f(z_{1})=\infty^{t}$ ($t\geq 2).$

\textbf{Case 2:} Suppose that $a(z)$ is non-rational meromorphic function.

\textbf{Subcase 2.1:} Assume that all coefficients of $R(z,f(z))$ are rational functions. Suppose, for the sake of contradiction, that $f(z)$ does not have any generic poles. Let $f(z_{0})=\infty^{t_{0}},$ where $t_{0}$ is a positive integer. From the proof of Subcase $1.2$, we find that we can choose a sequence $\left\{z_{0},z_{0}+1,z_{0}+2,\dots, \right \}$ such that this sequence lies outside of a sufficiently large circle, where $z=z_0$ is a pole of $f(z).$ In this manner, the zeros or poles of the coefficients of $R(z,f(z))$ cannot cancel the poles of $f(z)$ if we select $z=z_{0}$ as the starting point for the iteration. However, we must also consider the zeros or poles of $a(z).$
Thus, by \eqref{E10} we have following:
\begin{align}\label{N48}
&f(z_{0})=\infty^{t_{0}}, f{'}(z_{0})=\infty^{t_{0}+1}, c(z_{0})f(z_{0})=\infty^{t_{0}}, b(z_{0})f(z_{0})^{2} =\infty^{2t_{0}},\\\nonumber
&a(z_{0})f(z_{0}+1)=b(z_{0})f(z_{0})^{2} =\infty^{2t_{0}}, 
f(z_{0}+1)=\infty^{2t_{0}+\theta_{1}}. 
\end{align}
As previously stated, the definitions of $\theta_{j}$  $(j=1,2)$ can refer to \eqref{N32}.
By shifting \eqref{E10} up, \begin{align*}
{f}'(z+1)=a(z+1)f(z+2)+b(z+1)f(z+1)^{2}+c(z+1)f(z+1)+d(z+1).    
\end{align*}
Then by \eqref{N48} we have 
\begin{align*}
&f{'}(z_{0}+1)=\infty^{2t_{0}+\theta_{1}+1}, b(z_{0}+1)f(z_{0}+1)^{2}=\infty^{4t_{0}+2\theta_{1}}, \\\nonumber
&c(z_{0}+1)f(z_{0}+1)=\infty^{2t_{0}+\theta_{1}},\\\nonumber   
&a(z_{0}+1)f(z_{0}+2)=b(z_{0}+1)f(z_{0}+1)^{2}=\infty^{4t_{0}+2\theta_{1}},f(z_{0}+2)=\infty^{4t_{0}+2\theta_{1}+\theta_{2}}.  
\end{align*}
 By shifting \eqref{E10} up more steps, we can finally have $f(z)$ is not of subnormal growth. Therefore, the generic pole of $f(z)$ must exist.
Since the proof is very similar to that of \textbf{1.2}, we omit the details here.

\textbf{Subcase 2.2:} All coefficients of $R(z,f(z))$ are not all rational functions. Suppose, on the contrary, that $f(z)$ has no generic poles. Let $f(z_{0})=\infty^{t_{0}},$ where $z=z_0$ is any pole of $f(z),$    $t_0$ is a positive integer and $t_{0}>1.$ For the sake of convenience in  calculation, we assume that  $k(z_{0})=0^{1},$ where $k(z)$ is arbitrary coefficient of \eqref{E10}. The proof for the general case follows the same idea. Then from \eqref{E10} and its shifting, we have \begin{align}\label{N51}
&f(z_{0})=\infty^{t_{0}}, c(z_{0})f(z_{0})=\infty^{t_{0}-1}, f'(z_{0})=\infty^{t_{0}+1}, b(z_0)f(z_{0})^{2}=\infty^{2t_{0}-1},\\\nonumber  
 &d(z_{0})=0^{1}, a(z_{0})f(z_{0}+1)=b(z_0)f(z_{0})^{2}=\infty^{2t_{0}-1}, f(z_{0}+1)=\infty^{2t_{0}-2}. 
\end{align}
Shifting \eqref{E10} one step yields
\begin{align*}
{f}'(z+1)=a(z+1)f(z+2)+b(z+1)f(z+1)^{2}+c(z+1)f(z+1)+d(z+1),   
\end{align*}
By \eqref{N51} we have \begin{align}\label{N52}
&f{'}(z_{0}+1)=\infty ^{2t_{0}-1}, b(z_{0}+1)f(z_{0}+1)^{2}=\infty^{4t_{0}-5}, c(z_{0}+1)f(z_{0}+1)=\infty^{2t_{0}-3},\\\nonumber 
&d(z_{0}+1)=0 ^{1}, a(z_{0}+1)f(z_{0}+2)=b(z_{0}+1)f(z_{0}+1)^{2}=\infty^{4t_{0}-5},\\\nonumber
&f(z_{0}+2)=\infty^{4t_{0}-6}.
\end{align}
Shifting \eqref{E10} one step again, we have 
\begin{align*}
{f}'(z+2)=a(z+2)f(z+3)+b(z+2)f(z+2)^{2}+c(z+2)f(z+2)+d(z+2).   
\end{align*}
By \eqref{N52} we obtain that \begin{align}\label{N54}
&f{'}(z_{0}+2)=\infty ^{4t_{0}-5}, b(z_{0}+2)f(z_{0}+2)^{2}=\infty^{8t_{0}-13}, c(z_{0}+2)f(z_{0}+2)=\infty^{4t_{0}-7},\\\nonumber 
&d(z_{0}+2)=0 ^{1}, a(z_{0}+2)f(z_{0}+3)=b(z_{0}+2)f(z_{0}+2)^{2}=\infty^{8t_{0}-13},\\\nonumber
&f(z_{0}+3)=\infty^{8t_{0}-14}.
\end{align}
By \eqref{N51}, \eqref{N52} and \eqref{N54} we note that
\begin{align*}
&a(z_{0})f(z_{0}+1)=\infty^{2t_{0}-1}, \\ &a(z_{0}+1)f(z_{0}+2)=\infty^{4t_{0}-5},\\ &a(z_{0}+2)f(z_{0}+3)=\infty^{8t_{0}-13}. 
\end{align*}
Shifting \eqref{E10} upward by more steps, we obtain that $a(z_{0}+n-1)f(z_{0}+n)=\infty^{2^{n}t_{0}-(2^{n+1}-3)}=\infty^{2^{n}(t_{0}-2)+3},$ where $n$ is a positive integer and $n\ge1.$
Note also that $t_{0}> 2.$ Otherwise, we can conclude that  generic poles of $f(z)$ exist. Combining the fact that $a(z)$ is a small function of $f(z)$ with a  
similar previous proof, we can deduce that $f(z)$ does not exhibit subnormal growth. Therefore, the generic poles of $f(z)$ must exist.

Next, we prove that the generic poles of \(f(z)\) exist in \eqref{E61} and all generic poles are simple. Since the process is very similar to the proof of \eqref{E10}, we will omit some details.
Let $a(z)$ be a non-rational meromorphic function, and let not all coefficients of $R(z,f(z))$ be rational functions. 
Suppose, on the contrary, that \(f(z)\) does not have any generic poles. Since \eqref{N20} holds, and all coefficients of \eqref{E61} are small functions of \(f(z)\), it follows that the order of poles of $f(z)$ at $z=z_0$ is much greater than the order of the zeros or poles of $k(z)$ at $z=z_0,$ where $k(z)$ is arbitrary coefficient of \eqref{E61} and $z=z_0$ is arbitrary pole of $f(z).$ In other words, by \eqref{E61} 
we have following: if \(f(z_{0})=\infty^{l_{0}}\), it follows that
\begin{align}\label{N35}
&{f}'(z_0)=\infty^{l_{0}+1}, \frac{a_{0}(z_0)f(z_0)^{3}+a_{1}(z_0)f(z_0)^{2}+a_{2}(z_0)f(z_0)+a_{3}(z_0)}{f(z_{0})-a_{4}(z_{0})}=\infty^{2l_{0}+\delta_{1}}\\\nonumber    
&a(z_{0})f(z_{0}+1)=\infty^{2l_{0}+\delta_{1}}, f(z_{0}+1)=\infty^{2l_{0}+\delta_{1}+\delta_{2}}.           \\\nonumber
\end{align} 
By shifting \eqref{E61} up, we have following:
\begin{align}\label{N36}
  &f'(z+1) = a(z+1) f(z+2)\\\nonumber
+&\frac{a_0(z+1) f(z+1)^3+ a_1(z+1) f(z+1)^2 + a_2(z+1) f(z+1) + a_3(z+1)}{f(z+1) - a_4(z+1)}.
\end{align}
By \eqref{N35} we have  
\begin{align}\label{N37} 
&f{'}(z_0+1)=\infty^{2l_0+\delta_{1}+\delta_{2}+1},            \\\nonumber
&\frac{a_{0}(z_0+1)f(z_0+1)^{3}+a_{1}(z_0+1)f(z_0+1)^{2}+a_{2}(z_0+1)f(z_0+1)+a_{3}(z_0+1)}{f(z_{0}+1)-a_{4}(z_{0}+1)}\\\nonumber
&=\infty^{4l_{0}+2\delta_{1}+2\delta_{2}+\delta_{3}},\\\nonumber 
&a(z_{0}+1)f(z_{0}+2)=\infty^{4l_{0}+2\delta_{1}+2\delta_{2}+\delta_{3}},\\\nonumber
&f(z_{0}+2)=\infty^{4l_{0}+2\delta_{1}+2\delta_{2}+\delta_{3}+\delta_{4}},
\end{align}  
 where for the definitions of 
 $\delta_{j}\ne 0$ $(j=1,2,3,4)$ the reader can refer to \eqref{N32}.
Like in the previous case, by repeating the above iteration, we find that the multiplicities of the poles of \(f(z+n)\) increase  according to an exponential function of $n$. This leads to a contradiction, as \(f(z)\) is of subnormal growth.
Therefore the generic poles of \eqref{E61} also exist and all generic poles are simple. Then
\begin{align}\label{E835}
N_{1}(r,f(z))=T(r,f(z))+S(r,f(z)) 
\end{align}
is valid for \eqref{E10} and \eqref{E61}.

(ii). 
By making a transformation $w(z)=f(z)b(z),$ and substituting it into \eqref{E10}, it follows that \eqref{E10} takes the form 
\begin{align}\label{E380}
{w}'(z)=A(z)w(z+1)+w(z)^{2}+C(z)w(z)+D(z), 
\end{align}
where \begin{align}\label{E395}
\begin{cases}
A(z)=\frac{a(z)b(z)}{b(z+1)}\not\equiv 0,\\
C(z)=\frac{{b}'(z)}{b(z)}+c(z),\\
D(z)=b(z)d(z).
\end{cases}
\end{align}

By the proof of (i), we find that the generic poles of $f(z)$ must exist and all generic poles are simple in \eqref{E10}. That is \begin{align}\label{N60}
 N_{1}(r,w(z))=T(r,w(z))+S(r,w(z)). \end{align}
Note that since \eqref{E10} is equivalent to \eqref{E380}, it follows that $w(z)$ has generic poles and almost all generic poles of $w(z)$ are simple in \eqref{E380} and there are at most $S(r,w)$ points ${z_{j}}$ such that $w(z_{j})$ are not simple poles.
Let $z=z_{0}$ be arbitrary simple generic pole of $w(z).$ The Laurent series for  $w(z),$ ${w}'(z)$ and $w(z)^{2}$, in a neighborhood of $z_0$, are 
\begin{align}
\label{E28}
w(z)=&\frac{\lambda (z_{0})}{z-z_{0}} +k(z_{0})+O(z-z_{0}),\\\nonumber {w}'(z)=&\frac{-\lambda (z_{0})}{(z-z_{0})^{2}}+O(z-z_{0}),\\\nonumber
 w(z)^{2}=&\frac{\lambda(z_{0})^{2}}{(z-z_{0})^{2}}+\frac{2k(z_{0})\lambda (z_{0})}{z-z_{0}}+O(1), 
\end{align}
 where 
 $\lambda (z_{0})$ depends on $z_{0}.$ Since we  only need to consider the simple poles of $w(z),$ then we can always assume that $\lambda(z_{0})\neq 0.$  
Substituting \eqref{E28}
into \eqref{E380} results in $-\lambda (z_{0})-\lambda (z_{0})^{2}=0,$ namely,
 $\lambda (z_{0})=-1.$ If $-\lambda (z_{0})-\lambda (z_{0})^{2}\neq 0,$  then by \eqref{E380} it follows that $w(z+1)$ has a double pole at $z=z_{0}.$ 
By shifting \eqref{E380} up, we obtain the following:
\begin{align}\label{N40}
A(z+1)w(z+2) & = {w}' (z+1)-w(z+1)^{2}-C(z+1)w(z+1)-D(z+1).
\end{align}
We use $k(z+1)$ to represent an arbitrary  coefficient of \eqref{N40}. 
Supposing that $k(z_{0}+1)=0^{l}$ or $k(z_{0}+1)=\infty^{l},$  we define 
\begin{align*}
K=\left\{z_{0}:w(z_{0})=\infty^{1};k(z_{0}+1)=0^{l},or,k(z_{0}+1)=\infty^{l}, l\geq 1\right\}.
\end{align*}
Combining \eqref{N60} with the fact that all coefficients of \eqref{N40} are small functions of $w(z),$ we obtain that the integrated counting function for the set $K,$ denoted by $N_{K}(r,w(z)),$ is at most $S(r,w(z)).$ 
Therefore, by \eqref{N60} it follows that there exists a set  \begin{align*}
\hat {K}=\left\{z_{0}:w(z_{0})=\infty^{1}; k(z_{0}+1)\neq 0,k(z_{0}+1)\neq \infty \right\}
\end{align*}
such that the integrated counting function for set ${\hat {K}}$ satisfies \begin{align}\label{N61}
 N_{{\hat {K}}}(r,w(z))=T(r,w(z))+S(r,w(z)). \end{align}
By combining \eqref{N40} with \eqref{N61} we have $z=z_{0}+2$ is also a pole of $w(z)$ and its order is $4.$ By shifting \eqref{N40} up we have\begin{align}\label{N41}
A(z+2)w(z+3)& = {w}' (z+2)-w(z+2)^{2}-C(z+2)w(z+2)-D(z+2).
\end{align}
We use $k(z+2)$ to denote an arbitrary coefficient of \eqref{N41}. Supposing that $k(z_{0}+2)=0^{t}$ or $k(z_{0}+2)=\infty^{t},$ we define \begin{align*}
P=\left\{z_{0}:w(z_{0})=\infty^{1};k(z_{0}+2)=0^{l},or,k(z_{0}+2)=\infty^{l}, l\geq 1\right\},
\end{align*}
and 
\begin{align*}
{\hat P}=\left\{z_{0}:w(z_{0})=\infty^{1};k(z_{0}+2)\neq 0,k(z_{0}+2)\neq \infty^{l}\right\}.
\end{align*}
By the analysis above we find that the integrated counting function for the set $P,$ denoted by $N_{P}(r,w),$ is at most $S(r,w(z)).$ Additionally, the integrated counting function for the set ${\hat P},$ denoted by $N_{\hat P}(r,w),$ satisfies
\begin{align*}
N_{\hat P}(r,w(z))=T(r,w(z))+S(r,w(z)).   \end{align*}
Thus, $z=z_{0}+3$ is also a pole of $w(z)$ and $w(z_{0}+3)=\infty^{8}.$ By continuing to shift \eqref{N41} up , we can eventually obtain that $z=z_0,$ is also a pole of $w(z+n)$ and $w(z_{0}+n)=\infty ^{2^{n}},$ where $n$ is an arbitrary positive integer.
   In this case,
it is easy to verify that  $w(z)$ is not of subnormal growth. Therefore, $\lambda(z_{0})=-1$ is proved.

On the other hand,
 shifting \eqref{E380} down, we have \begin{align}\label{E320}
 A(z-1)w(z)=w'(z-1)-w(z-1)^{2}-C(z-1)w(z-1)-D(z-1).  
\end{align}
By the analysis above we have that $z=z_{0}$ is also a pole of $w(z-1)$. Assume that $w(z_{0}-1)=\infty^{t}$ $(t\ge 1).$ 
By \eqref{E320} we have  if $t> 1,$ then the order of right-hand side of \eqref{E320} is $2t$ at $z=z_{0},$ which yields a contradiction with the order of left-hand side of \eqref{E320} is simple at $z=z_{0}.$  Therefore,
all simple generic poles of $w(z)$ must be simple  poles of $w(z-1).$
 Then we can write $w(z-1)$ in the form \begin{align}\label{E325}
w(z-1)=\frac{\beta (z_{0})}{z-z_{0}}+O(1), \quad \beta (z_{0})\ne 0,
\end{align}
in a small neighborhood of $z=z_{0},$ where $z_0$
is an arbitrary generic pole of $w(z).$ Substituting \eqref{E325} into \eqref{E320}, we obtain $\beta (z_{0})=-1.$ 
By shifting \eqref{E320} down, we have \begin{align}\label{N42}
 A(z-2)w(z-1)=w'(z-2)-w(z-2)^{2}-C(z-2)w(z-2)-D(z-2).  
\end{align} 
By the same way, we have that $z=z_{0}-2$ is also a pole of $w(z).$ Suppose that \begin{align}\label{N63}
 w(z-2)=\frac{\overline \beta }{z-z_{0}}+O(1),  
\end{align}
in a small neighborhood of $z=z_{0}.$ By substituting \eqref{N63} into \eqref{N42}, we have $\overline \beta=-1.$
By continuing to shift \eqref{N42} down, we can end up with \begin{align}\label{E362}
 w(z-n)=\frac{-1}{z-z_{0}}+O(1),
\end{align}
 in a neighborhood of $z_{0},$ where 
$n$ is an arbitrary non-negative integer. 
Next we will prove that \eqref{E362}  also holds for arbitrary negative integer $n$, i.e., we will next prove that \eqref{E362} is valid for all  integers $n$. Substituting \eqref{E28}
into \eqref{E380} we obtain
\begin{align}\label{E363} 
w(z+1)=&\frac{1}{A(z)}\left[{w}'(z)-w(z)^{2}-C(z)w(z)-D(z)\right]\\\nonumber
=&\frac{1}{A(z_{0})}\frac{2k(z_{0})+C(z_{0})}{z-z_{0}}+O(1)\\\nonumber
=&\frac{l(z_{0})}{z-z_{0}}+O(1), 
\end{align}
where we have used the complex geometric series to deduce that
$$\frac{1}{1+O(z-z_{0})}=1+O(z-z_{0}) $$
in a small enough neighborhood of $z=z_{0}$ and $l(z_{0})=\frac{2k(z_{0})+C(z_0)}{A(z_{0})}.$ 
Shifting \eqref{E380} up, it follows that 
\begin{align}\label{E599}
{w}'(z+1)=A(z+1)w(z+2)+w(z+1)^{2}+C(z+1)w(z+1)+D(z+1).  
\end{align}
By substituting \eqref{E363} into \eqref{E599}, we obtain that  \begin{align}\label{E587}
&w(z+2)\\\nonumber
=&\frac{1}{A(z+1)}\left [ {w}'(z+1)-w(z+1)^{2}-C(z+1)w(z+1)-D(z+1) \right ]\\\nonumber
=&\frac{1}{A(z_0+1)}\left [ \frac{-l(z_{0})}{(z-z_0)^2}-\frac{l^2(z_0)}{(z-z_0)^2}-C(z_0+1)\frac{l(z_0)}{(z-z_0)}-D(z_0+1)+O(1) \right ]
\end{align} in a small neighborhood of $z_0.$ By \eqref{E587} we have \(-l(z_0)-l(z_0)^{2}=0\). Otherwise, we have \(w(z+2)\) has a pole of order \(2\) at \(z=z_0\). By shifting \eqref{E587} one step, we have
\begin{align}\label{N21}
&w(z+3)\\\nonumber
=&\frac{1}{A(z+2)}\left [ {w}'(z+2)-w(z+2)^{2}-C(z+2)w(z+2)-D(z+2)\right]\\\nonumber
\end{align}
in a neighborhood of $z_{0}.$
From \eqref{N21}, we have \(w(z+3)\) has a pole of order \(2^2\) at \(z=z_0\). Repeating this process, \(w(z+n)\) has a pole of order \(2^{n-1}\) at \(z=z_0\), and then it follows that \(w(z)\) is not of subnormal growth. Therefore, $l(z_0)=0$
or $l(z_0)=-1.$  Hence \begin{equation}\label{N22}
w(z+2)=\frac{-C(z_0+1)}{A(z_0+1)}\frac{l(z_0)}{(z-z_0)}+O(1):=\frac{m(z_0)}{z-z_0}+O(1)\end{equation}   
in a neighborhood of $z_{0}.$
Substituting \eqref{N22} into \eqref{N21}, it follows that 
\begin{align*}
&w(z+3)\\\nonumber
=&\frac{1}{A(z+2)}\left [ {w}'(z+2)-w(z+2)^{2}-C(z+2)w(z+2)-D(z+2) \right ]\\
\nonumber
=&\frac{1}{A(z_0+2)}\left [ \frac{-m(z_{0})}{(z-z_0)^2}-\frac{m^2(z_0)}{(z-z_0)^2}-C(z_0+2)\frac{m(z_0)}{(z-z_0)}-D(z_0+2)+O(1) \right ]
\end{align*}
in a neighborhood of $z=z_{0}.$
Similarly, we have $m(z_0)=-1$ or $m(z_0)=0.$ Then we have 
\begin{equation} w(z+3) =\frac{\alpha (z_0)}{z-z_0}+O(1)\end{equation} in a neighborhood of $z=z_{0},$  where $\alpha(z_0)=\frac{-C(z_0+2)m(z_0)}{A(z_0+2)}.$
By repeating  the process above, we have that 
\begin{align}\label{E593}
w(z+\hat {k})=\frac{\eta (z_0)}{z-z_0}+O(1),   
\end{align}
in a small neighborhood of $z_0,$ where $\eta (z_0)=0$ or $-1,$ and $\hat {k}$ is an arbitrary positive 
integer. In \eqref{E593},
suppose that for an arbitrary positive integer $\hat {k},$ we have $\eta(z_0)=-1$. Then 
\begin{align}\label{E581} 
w(z+\hat {k})=\frac{-1}{z-z_0}+O(1),
\end{align}
in a neighborhood of $z_0.$
Combining  \eqref{E362} and \eqref{E581}, we  obtain that \eqref{E362} is valid for all integers $n.$ 
Then
\begin{align}\label{E493}
 N(r,w(z+1)-w(z))=S(r,w(z)).   
\end{align}

On the other hand, in \eqref{E593}, if there exists a positive integer $k$ such that $\eta (z_0)=0,$ i.e., $w(z_0+k)\not =\infty.$ Then by shifting \eqref{E380} up, we obtain $w(z_0+k+n)\not=\infty,$ where $n$ is an arbitrary positive integer. Without loss of generality,
we assume now that $k=1,$ i.e., $w(z_0)=\infty$ and $w(z_0+n)\not=\infty,$ where $n$ is an arbitrary positive integer. Additionally, combining  \eqref{E362} we have \begin{align}\label{E582}
\dots ,w(z_0-2)=\infty ,w(z_0-1)=\infty ,w(z_0)=\infty, w(z_0+1)\not =\infty, w(z_0+2)\not =\infty,\dots  
\end{align}
By the previous analysis we see that if $z=z_{0}$ is a generic pole of $w(z),$ then $z=z_{0}-1,$ $z=z_{0}-2,$ \dots, are all generic poles of $w(z).$
Moreover, we call $z=z_{0}$ an $\mathit{end}$ $\mathit{point}$ of $w(z)$, if $z=z_{0}$ is a generic pole of $w(z)$ and $w(z_{0}+1)\not=\infty.$ 
Then by \eqref{E582} we see that $z=z_0$ is an end point of $w(z).$  
Additionally, $N_{e}(r,w)$  stands for the counting function for the end points of $w(z)$ and $N_{p}(r,w)$ stands for the counting function for the regular poles of $w(z).$ (We say that $z=z_{0}$ is a $\mathit{regular}$ $\mathit{pole}$ of $w(z)$ if $z=z_{0}$ is a generic pole of $w(z)$ that is not an end point of $w(z)$.) In other words, the generic poles of $w(z)$ are a  combination of the end points and regular poles of $w(z).$
Combining the  definitions of end points and regular poles, \eqref{N60} and \eqref{E582}, yields 
\begin{align}\label{E381}
N(r,w(z))=&N_{1}(r,w(z))+S(r,w(z))\\\nonumber
=&N_{e}(r,w(z))+N_{p}(r,w(z))+S(r,w(z)).
\end{align}
Note that  $z=z_{0}$ is an end point and  $z=z_{0}-n$ is regular pole, where $n$ is a positive integer. Hence, from \eqref{E582} we have following inequality
\begin{align}\label{E382}
n_{e}(r,w(z)) \le \frac{1}{m}n_{p}(r+m,w(z))
\end{align} for any positive integer $m.$
By integration from the both sides of \eqref{E382} and using Lemma \ref{L2} and \eqref{E835}, we have that  
\begin{align}\label{E383}
N_{e}(r,w(z)) \le& \left (\frac{1}{m}+\varepsilon\right)N_{p}(r+m,w(z))\\\nonumber
 \le & \left (\frac{1}{m}+\varepsilon\right)N(r+m,w(z))\\\nonumber
=&\left (\frac{1}{m}+\varepsilon\right)N(r,w(z))+S(r,w(z))\\\nonumber
=&\left (\frac{1}{m}+\varepsilon\right)T(r,w(z))+S(r,w(z)).
\end{align}
Assuming that $N_e(r,w)\not=S(r,w)$, it follows that there exists a set $F$ of infinite logarithmic measure (or of non-zero upper density measure) such that we can find a positive constant $K$ which satisfies \begin{align}\label{E384}
N_{e}(r,w(z))\ge K T(r,w(z)) 
\end{align}
for all $r\in F.$ Combining \eqref{E383} and \eqref{E384}, it follows that 
\begin{align}\label{E385}
KT(r,w(z))\le N_{e}(r,w(z))\le\left ( \frac{1}{m}+\varepsilon\right)T(r,w(z))+S(r,w(z))   
\end{align}
for all $r\in F$ is valid. Since $K$ is a fixed constant, we can choose $m$ large enough and $\varepsilon > 0$ small enough to get a contradiction. Therefore, 
$$N_e(r,w)=S(r,w),$$
which yields \eqref{E362} holds for all integers $n,$  where $z_{0}$ is arbitrary generic pole of $w(z)$. Then \eqref{E493} is always valid.\par

On the other hand, by \eqref{E835} we have $m(r,w(z))=S(r,w(z)).$ Then by Lemma \ref{L3}, it follows  that \begin{align}\label{E584} 
m(r,w(z+1)-w(z))=S(r,w(z)).
\end{align}
Combining \eqref{E493} and \eqref{E584},
we can therefore obtain that  
\begin{align}\label{E369}
w(z+1)-w(z):=\gamma (z),\quad T(r,\gamma (z))=S(r,w(z)).
\end{align}
Noting that $w(z)=f(z)b(z),$ we have \eqref{E369} is equivalent to \begin{align}\label{E400}
f(z+1)=\frac{f(z)b(z)+\gamma (z)}{b(z+1)}.
\end{align}
Substituting \eqref{E400} into \eqref{E10}, it follows that \eqref{E10} takes the form
$${f}'(z)=b(z)f(z)^{2}+\left [ c(z)+\frac{a(z)b(z)}{b(z+1)}\right ]f(z)+d(z)+\frac{a(z)\gamma (z)}{b(z+1)} ,$$
where $n(z)=d(z)+\frac{a(z)\gamma (z)}{b(z+1)},$
and $T(r,n(z))=S(r,f(z)).$

(iii). 
Let $f(z)-a_{4}(z)=g(z).$ Then \eqref{E61} becomes \begin{align}\label{E600}
{g}'(z)=a(z)g(z+1)+a_{0}(z)g(z)^{2}+\hat{a} _{1}(z)g(z)+\hat{a} _{2}(z)+\frac{\hat{a}_{3}(z)}{g(z)},   
\end{align}
where
\begin{align}
\begin{cases}\label{N612}
 \hat a_1(z)=a_1(z)+3a_0(z)a_4(z),\\
 \hat a_2(z)=a_2(z)+2a_1(z)a_4(z)+3a_0(z)a_4(z)^{2}+a(z)a_{4}(z+1)-a'_4(z),\\
\hat a_3(z)=a_0(z)a_4(z)^{3}+a_1(z)a_4(z)^{2}+a_2(z)a_4(z)+a_3(z). 
\end{cases}
\end{align}
Furthermore, assume that $g(z)a_{0}(z)=u(z)$. By a  simple computation  we then see that \eqref{E600} becomes \begin{align}\label{E601} 
{u}'(z) =A(z)u(z+1)+u(z)^{2}+C(z)u(z)+D(z)+\frac{E(z)}{u(z)},  
\end{align}
where \begin{align}\label{E602}
\begin{cases}
 A(z)=\frac{a(z)a_0(z)}{a_0(z+1)}\not\equiv 0,\\
C(z)=\hat a_{1}(z)+\frac{a'_{0}(z)}{a_0 (z)},\\
D(z)=\hat a_2(z)a_0(z),\\
E(z)=\hat a_3(z)a_0(z)^{2}. 
\end{cases}
\end{align}
In what follows, we first show that \eqref{E601} reduces to a Riccati equation. Also we note that the relations between $f(z)$ and $u(z)$ satisfy $u(z)=a_{0}(z)f(z)-a_{4}(z),$ that is to say, \eqref{E61} can also reduces to a Riccati equation.\par

We discuss two cases.\par 
Case 1. Assume that $E(z)\equiv 0$, and thus $\hat{a}_{3}(z)\equiv 0.$ Then \eqref{E601} reduces to  \begin{align}\label{N23} 
{u}'(z) =A(z)u(z+1)+u(z)^{2}+C(z)u(z)+D(z).
\end{align}
Note that \(A(z)=\frac{a(z)a_0(z)}{a_0(z+1)}\not\equiv 0\) and  \eqref{N23} is the same kind of equation as  \eqref{E380}.
Then by the proof of (ii), we get that
\begin{align}\label{N24}
u(z+1)=u(z)+\varphi (z),\end{align} where $T(r,\varphi (z))=S(r,u(z)). $
Substituting \eqref{N24} into \eqref{N23}, we obtain \eqref{N23} reduces into following Riccati equation:
\begin{equation}\label{N25}
{u}'(z)=u(z)^{2}+u(z)(A(z)+C(z))+A(z)\varphi (z)+D(z).    
\end{equation} By $u(z)=(f(z)-a_4(z))a_0(z),$ we continue to get the  Riccati equation:
\begin{align*}
f{'}(z)
=&f(z)^{2}+\left(\frac{a_{0}'(z)}{a_{0}(z)}-2a_{4}(z)+A(z)+C(z)\right)f(z)+a_{4}(z)+\frac{a'_{0}(z)}{a_{0}(z)}a_{4}(z)\\\nonumber   
&+a_{4}(z)^{2}-a_{4}(z)(A(z)+C(z))+\frac{A(z)(\varphi (z)+D(z))}{a_{0}(z)}  \\\nonumber
&=f(z)^{2}+h_{1}(z)f(z)+h_{2}(z),   
\end{align*}
where \begin{align}\label{N611}
&h_1(z) =\frac{a'_0(z)}{a_0(z)}-2a_4(z)+A(z)+C(z),\\\nonumber
&h_2(z) =a_{4}(z)+\frac{a'_{0}(z)}{a_{0}(z)}a_{4}(z)+a_{4}(z)^{2}-a_{4}(z)(A(z)+C(z))+\frac{A(z)(\varphi (z)+D(z))}{a_{0}(z)},\\\nonumber
\end{align}
and $T(r,h_{2}(z))=S(r,f(z)).$

Next, we will proceed to demonstrate that \eqref{N23} reduces to a Riccati equation having different coefficients with Case $1$.

Case 2. Assume now that $E(z)\not\equiv 0$, and thus $\hat{a}_{3}(z)\not\equiv 0.$ Then $u=0$ is not a root of $$P(z, u(z)):=u(z)\left({u}'(z)-A(z)u(z+1)-u(z)^{2}-C(z)u(z)-D(z)\right)-E(z)=0.$$ By applying Lemma \ref {L10} to \eqref{E601}, it follows that 
\begin{align}\label{E331}
 N\left ( r,\frac{1}{u(z)}\right)=T(r,u(z))+S(r,u(z)).
\end{align}
\par
Now we assert that  generic zeros of $u(z)$ must exist when $u(z)$ is a subnormal meromorphic solution of \eqref{E601}. Suppose, on the contrary, that the generic zeros of $u(z)$ do not exist. Let $z=z_{1}$ be a zero of order $m_{1}$ of $u(z).$ Then from \eqref{E601} we have 
$\frac{E(z_1)}{u(z_1)}=\infty^{m+\varepsilon},$ where $\left | \varepsilon  \right |\ll m ,$ i.e.,  for each fixed $\epsilon\in (0,1),$ there exists $z_{1} $ such that $\left|\varepsilon\right|\leq \epsilon \cdot m.$ Note that $a(z)$ and $a_0(z)$ are rational, and so $A(z)$ must be rational by \eqref{E602}. Hence, by $\frac{E(z_1)}{u(z_1)}=\infty^{m+\varepsilon}$ we deduce that $A(z_1)u(z_1+1)=u(z_1+1)=\infty^{m+\varepsilon}$ in \eqref{E601}. Shifting \eqref{E601} up, we have
\begin{align}\label{E744}
{u}'(z+1)
=&A(z+1)u(z+2)+u(z+1)^{2}+C(z+1)u(z+1)\\\nonumber
&+D(z+1)+\frac{E(z+1)}{u(z+1)}.  
\end{align}
Then $u'(z_1+1)=\infty^{m+\varepsilon+1},$ and $A(z_1+1)u(z_1+2)=u(z_1+2)=\infty^{2m+2\varepsilon}.$ By continuing to shift \eqref{E601}, we have
\begin{align*}
{u}'(z+2)
=&A(z+2)u(z+3)+u(z+2)^{2}+C(z+2)u(z+2)\\\nonumber
&+D(z+2)+\frac{E(z+2)}{u(z+2)}.  
\end{align*}
Then $u'(z_1+2)=\infty^{2m+2\varepsilon+1},$ and $A(z_1+2)u(z_1+3)=u(z_1+3)=\infty^{4m+4\varepsilon}.$ By repeating this process, we have that $A(z_1+d)u(z_1+d+1)=u(z_1+d+1)=\infty^{2^{d}m+2^{d}\varepsilon}$ for arbitrary positive integer $d.$
It is easy to verify that this contradicts the assumption that $u(z)$ is subnormal. Thus, the existence of generic zero of $u(z)$ is proved in \eqref{E601}.

 We assume now that \begin{align}\label{E332}
u(z)=k(z_{0})(z-z_{0})^{m}+O((z-z_{0})^{m+1}), \quad k(z_{0})\neq 0,
\end{align}
in a small neighborhood of $z_0,$ where
$z_0$ is an arbitrary generic zero of $u(z)$.
If $m>1,$ then by shifting \eqref{E601} forward  more steps, we can derive that the growth of $u(z)$ is not subnormal  (similarly to \eqref{E744}). This implies $m=1,$ i.e.,
\begin{align} 
\label{E337} 
N_{1}\left(r,\frac{1}{u(z)}\right)=T(r,u(z))+S(r,u(z)).
\end{align}
Substituting \eqref{E332} with $m=1$ into \eqref{E601}, we have that
\begin{align}\label{E334}
u(z+1)=&\frac{1}{A(z)}\left[{u}'(z)-u(z)^{2}-C(z)u(z)-D(z)-\frac{E(z)}{u(z)}\right]\\\nonumber
=&\frac{1}{A(z_{0})}\frac{-E(z_{0})}{k(z_{0})}\frac{1}{z-z_{0}}+\eta (z_0)+O(z-z_0)\\\nonumber
=&\frac{l(z_0)}{z-z_0}+\eta (z_0)+O(z-z_0),    
\end{align}
in a small neighborhood of $z_0,$ where $l(z_{0})=\frac{1}{A(z_0)}\frac{-E(z_0)}{k(z_0)}.$
Shifting \eqref{E334} forward one step
\begin{align}\label{E335}
&u(z+2)\\\nonumber
=&\frac{1}{A(z+1)}u'(z+1)-\frac{1}{A(z+1)}u(z+1)^{2}-\frac{1}{A(z+1)}C(z+1)u(z+1)\\\nonumber
&-\frac{1}{A(z+1)}D(z+1)-\frac{1}{A(z+1)}\frac{E(z+1)}{u(z+1)} 
\end{align}
Substituting \eqref{E334} into \eqref {E335}
we have 
$l(z_0)=-1.$
Otherwise, $u(z+2)$ has a pole of order $2$ at $z=z_0.$ Then
by shifting \eqref{E601} forward  more steps, we have  
\begin{align*}
&u(z+3)\\
=&\frac{1}{A(z+2)}u'(z+2)-\frac{1}{A(z+2)}u(z+2)^{2}-\frac{1}{A(z+2)}C(z+2)u(z+2)\\\nonumber
&-\frac{1}{A(z+2)}D(z+2)-\frac{1}{A(z+2)}\frac{E(z+2)}{u(z+2)},\\\nonumber  
\end{align*}
in this case, it follows that $u(z+3)$ has a pole of order $4$ at $z=z_0.$ If we repeat the process,  we can then see that $u(z+n)=\infty^{2^{n-1}}$ at $z=z_{0} .$ By performing a similar computation to   \eqref{N46}, we can obtain that the growth of $u(z)$ exceeds subnormal growth. Substituting $l(z_0)=-1$
into \eqref{E334} and by simple calculation, we have   
\begin{align}\label{E830}
{u}'(z+1)=\frac{-l(z_0)}{(z-z_0)^{2}}+O(1)=\frac{1}{(z-z_0)^{2}}+O(1),
\end{align}
and \begin{align}\label{E831}
u(z+1)^{2}=&\frac{l(z_0)^{2}}{(z-z_0)^{2}}+\frac{2l(z_0)\eta (z_0)}{z-z_0}+O(1)\\\nonumber
=& \frac{1}{(z-z_0)^{2}}-\frac{2\eta (z_0)}{z-z_0}+O(1),    
\end{align}
in a small neighborhood of $z_0.$
With \eqref{E335}, \eqref{E830} and \eqref{E831}, it follows that \begin{align}\label{E832} 
u(z+2)=\frac{2\eta (z_0)+C(z_0+1)}{A(z+1)}\frac{1}{z-z_0}+O(1),
\end{align}
in a small neighborhood of $z_0.$
Shifting \eqref{E335} forward one step to obtain the expression of $u(z+3)$ and substituting \eqref{E832} into the expression of $u(z+3),$ implies $$\frac{2\eta (z_0)+C(z_0+1)}{A(z+1)}+\left (\frac{2\eta (z_0)+C(z_0+1)}{A(z+1)}  \right )^{2}=0  .$$ Then
\eqref{E832} becomes \begin{align}\label{N1} 
u(z+2) & = \frac{-1}{z-z_0}+O(1),
\end{align} or 
\begin{align}\label{N2} 
u(z+2) & = O(1),
\end{align}
in a small neighborhood of $z_0.$

Next, we present all cases concerning the distribution of zeros and poles for $u(z).$ According to \eqref{E835} and \eqref{E337}, it is sufficient to consider only the simple zeros and poles of $u(z).$ Let $z=z_{0}$ be an arbitrary simple generic zero of $u(z).$ By combining \eqref{N1} and  \eqref{N2}, we find three possibilities for $u(z_{0}+2):$ $u(z_{0}+2)=0^{1},$ $u(z_{0}+2)=\infty^{1},$ and $u(z_{0}+2)$ is finite and non-zero. Furthermore, we will demonstrate that at most $S(r,u)$ points satisfy $u(z_{0}+2)=\infty^{1}.$

\textbf{Step 1.} 
Assume that $u(z_{0}+2)=\infty^{1}.$ Then the  distribution of the zeros and poles of $u(z)$ is as follows: 

\textbf{Case 1.1.}
\begin{align*}
\dots u(z_0-1)=\Delta_1 ,u(z_0)=0^{1},u(z_0+1)=\infty^{1} , u(z_0+2)=\infty^{1},u(z_0+3)=\infty ^{1} \dots   
\end{align*}

 \textbf{Case 1.2.} 
 \begin{align*}
\dots u(z_0-1)=\infty^{1},u(z_0)=0^{1},u(z_0+1)=\infty^{1} , u(z_0+2)=\infty^{1},u(z_0+3)=\Delta_2 \dots   \end{align*}

\textbf{Case 1.3.}
\begin{align*}
\dots u(z_0-1)=\infty^{1},u(z_0)=0^{1},u(z_0+1)=\infty^{1} , u(z_0+2)=\infty^{1},u(z_0+3)=\infty ^{1} \dots   
\end{align*}

\textbf{Case 1.4.} 
\begin{align*}
\dots  u(z_0-1)=\Delta_1,u(z_0)=0^{1},u(z_0+1)=\infty^{1} , u(z_0+2)=\infty^{1},u(z_0+3)=\Delta_2 \dots    
\end{align*}
where $\Delta_1$
denotes a finite and non-zero value, and $\Delta_2$ is a finite value.
It is easy to verify that 
 \begin{align}\label{E837} 
N_{\mathbf{Step 1} } \left ( r,\frac{1}{u}\right)
=&N_{\textbf{Case 1.1}}\left ( r,\frac{1}{u} \right )+ N_{\textbf{Case 1.2}}\left ( r,\frac{1}{u} \right )\\\nonumber
\quad&+N_{\textbf{Case 1.3}}\left ( r,\frac{1}{u} \right )+N_{\textbf{Case 1.4}}\left (r,\frac{1}{u} \right ).    
\end{align}

\textbf{Case A.}
Let the distribution of the zeros and  poles of $u(z)$ satisfy case $1.$
In other words, we can always find that there exist at least three simple poles that can be paired with each simple zero of $u(z).$ Then combining \eqref{E337} and Lemma \ref{L2}, it follows that   
\begin{align*}
N_{\textbf{case 1.1}}\left ( r,\frac{1}{u}\right)=&N_{\mathbf{Step 1} } \left ( r,\frac{1}{u}\right)\\\nonumber
\le& \frac{1}{3}N_{\mathbf{Step 1}}(r+3,u)+S(r,u).\\\nonumber
\end{align*}
\textbf{Case B.} Let the distribution of the zeros and poles of $u(z)$ satisfy case $2.$
Then we can see that there are at least two simple poles for each simple zero of $u(z).$ Then, by combining this fact and\eqref{E337} with Lemma \ref{L2}, we arrive at the following expression:
\begin{align*}
N_{\textbf{case 1.2}}\left ( r,\frac{1}{u}\right)=&N_{\mathbf{Step 1} }\left ( r,\frac{1}{u}\right)\\\nonumber 
\le& \frac{1}{2}N_{\mathbf{Step 1} }(r+2,u)+S(r,u).\\\nonumber 
\end{align*}

\textbf{Case C.} Let the distribution of the zeros and poles of $u(z)$ satisfy case $3.$ By above discussion from case $A$ and case $B$. It is easy to verify that \begin{align*}
N_{\textbf{case 1.3}}\left ( r,\frac{1}{u}\right)=&N_{\mathbf{Step 1}}\left ( r,\frac{1}{u}\right)\\\nonumber
\le& \frac{1}{3}N_{\mathbf{Step 1} }(r+3,u)+S(r,u).     
\end{align*}

\textbf{Case D.} Let the distribution of the zeros and poles of $u(z)$ satisfy case $4.$ Likewise, we have that \begin{align*}
 N_{\textbf{case 1.4}}\left ( r,\frac{1}{u}\right)=&N_{\mathbf{Step1} } \left ( r,\frac{1}{u}\right)\\\nonumber
\le & \frac{1}{2}N_{\mathbf{Step1} }(r+2,u)+S(r,u).
\end{align*}

\textbf{Case E.} Let the distribution of the zeros and poles of $u(z)$ be an arbitrary  combination of cases A through D. Then Combining \eqref{E837} with the conclusions of case $A$ through case $D$ result in 
\begin{align}\label{EN3} 
N_{\mathbf{Step 1} } \left (r,\frac{1}{u}\right)
\le\max  \left \{ \frac{1}{3},\frac{1}{2},\frac{1}{3},\frac{1}{2}\right \} N_{\mathbf{Step1} } (r+3,u)+S(r, u).  
\end{align}

$\textbf{Step 2.}$ 
Let $u(z_0+2)$ be finite and non-zero. Then the distribution of the zeros and poles of $u(z)$ is as follows:

\textbf{Case 2.1.}
$$\dots u(z_0-1)=\infty^{1}, u(z_0)=0^{1}, u(z_0+1)=\infty^{1}, u(z_0+2)=\Delta, u(z_0+3)\not=\infty\dots $$

\textbf{Case 2.2.}
$$\dots u(z_0-1)\not =\infty , u(z_0)=0^{1}, u(z_0+1)=\infty^{1}, u(z_0+2)=\Delta, u(z_0+3)=\infty\dots $$

\textbf{Case 2.3.}
$$\dots u(z_0-1)=\infty^{1} , u(z_0)=0^{1}, u(z_0+1)=\infty^{1}, u(z_0+2)=\Delta, u(z_0+3)=\infty\dots $$

\textbf{Case 2.4.}
$$\dots u(z_0-1)\not =\infty , u(z_0)=0^{1}, u(z_0+1)=\infty^{1}, u(z_0+2)=\Delta, u(z_0+3)\not =\infty\dots $$
where $\Delta$ denotes a finite and non-zero value. 
We now define two sets
\begin{align}\label{E853}
&A_{1}: =\left \{ z_{0}:u(z_0)=0, u(z_0+1)=\infty, u(z_0+2)=\Delta, u(z_0+3)=\infty  \right \},\\\nonumber  
&A_{2}: =\left \{ z_{0}:u(z_0)=0, u(z_0+1)=\infty, u(z_0+2)=\Delta, u(z_0+3)\not =\infty  \right \}.\\\nonumber\end{align}
Next, we define two sets, $\hat{A}_1$ and $\hat{A}_2,$ which are formed out of the poles associated with the zeros of $A_1$ and $A_2$:
\begin{align}\label{E854}
&\hat{A}_1: =
\left \{ z_0+1:u(z_0)=0,u(z_0+1)=\infty, u(z_0+2)=\Delta, u(z_0+3)=\infty  \right \}\\\nonumber 
&\cup\left \{ z_0+3:u(z_0)=0,u(z_0+1)=\infty, u(z_0+2)=\Delta, u(z_0+3)=\infty  \right \},\\\nonumber
&\hat{A}_2: =
\left \{ z_0+1:u(z_0)=0,u(z_0+1)=\infty, u(z_0+2)=\Delta, u(z_0+3)\not =\infty  \right \},\\\nonumber 
\end{align} By the definitions of \eqref{E853} and \eqref{E854}, we have
\begin{align}\label{E160}
&n_{\textbf{Case 2.2.}}\left(r,\frac{1}{u}\right)\le \frac{1}{2}n_{\textbf{Case 2.2.}}(r+3,u),\\\nonumber
&n_{\textbf{Case 2.3.}}\left(r,\frac{1}{u}\right)\le \frac{1}{2}n_{\textbf{Case 2.3.}}(r+3,u),\\\nonumber
&n_{A_1}\left(r,\frac{1}{u}\right)=n_{\textbf{Case 2.2.}}\left(r,\frac{1}{u}\right)+n_{\textbf{Case 2.3.}}\left(r,\frac{1}{u}\right),\\\nonumber  
&n_{\hat{A}_1}\left(r+3,u\right)=n_{\textbf{Case 2.2.}}\left(r+3,u\right)+n_{\textbf{Case 2.3.}}\left(r+3,u\right),
\end{align}
and 
\begin{align}\label{E161}
&n_{\textbf{Case 2.1}}\left ( r,\frac{1}{u}\right)\le n_{\textbf{Case 2.1}}\left ( r+1,u\right),\\\nonumber  
&n_{\textbf{Case 2.4}}\left ( r,\frac{1}{u}\right)\le n_{\textbf{Case 2.4}}\left ( r+1,u\right),\\\nonumber
&n_{A_2}\left(r,\frac{1}{u}\right)=n_{\textbf{Case 2.1}}\left ( r,\frac{1}{u}\right)+n_{\textbf{Case 2.4}}\left (r,\frac{1}{u}\right),\\\nonumber
&n_{\hat A_2}\left(r+1,u\right)=n_{\textbf{Case 2.1}}\left ( r+1,u\right)+n_{\textbf{Case 2.4}}\left (r+1,u\right).
\end{align}
Therefore, by \eqref{E160} and \eqref{E161} we have the following:
\begin{align}\label{E855}
n_{A_{1}}\left(r,\frac{1}{u}\right)=&n_{\textbf{Case2.2}}\left(r,\frac{1}{u}\right)+n_{\textbf{Case2.3}}\left(r,\frac{1}{u}\right)\\\nonumber   
\leq&\frac{1}{2}n_{\textbf{Case2.2}}(r+3,u)+\frac{1}{2}n_{\textbf{Case2.3}}(r+3,u)\\\nonumber   
=&\frac{1}{2}n_{\hat A_{1}}(r+3,u), \\\nonumber
n_{A_{2}}\left(r,\frac{1}{u}\right)=&n_{\textbf{Case2.1}}\left(r,\frac{1}{u}\right)+n_{\textbf{Case2.4}}\left(r,\frac{1}{u}\right)\\\nonumber   
\leq& n_{\textbf{Case2.1}}(r+1,u)+n_{\textbf{Case2.4}}(r+1,u) \\\nonumber  
=&n_{\hat A_{2}}(r+1,u). 
\end{align}
By integrating \eqref{E855}, it follows that 
\begin{align}\label{E856}
N_{A_{1}}\left ( r,\frac{1}{u}\right)\le \left ( \frac{1}{2}+\varepsilon_1  \right ) N_{\hat{A}_1}(r+3,u),\\\nonumber
N_{A_{2}}\left ( r,\frac{1}{u}\right)\le (1+\varepsilon_2)N_{\hat{A}_2}(r+1,u),    
\end{align}
where $\varepsilon_1$ and $\varepsilon_2$ are arbitrary positive constants.

$\textbf{Step 3.}$
Let $u(z_0+2)=0^{1}$. Then the distribution of the zeros and poles of $u(z)$ is as follows:

\textbf{Case 3.1.}
\begin{align*}
&\dots u(z_0-1)=\infty^{1} , u(z_{0})=0^{1} , u(z_0+1)=\infty^{1} , u(z_0+2)=0^{1} ,\\\nonumber
&u(z_0+3)=\infty^{1} ,u(z_0+4)\not=\infty \dots    
\end{align*}

\textbf{Case 3.2.}
\begin{align*}
&\dots u(z_0-1)\not =\infty, u(z_{0})=0^{1}, u(z_0+1)=\infty^{1} , u(z_0+2)=0^{1} ,\\\nonumber
&u(z_0+3)=\infty^{1} ,u(z_0+4)=\infty^{1}  \dots    
\end{align*}

\textbf{Case 3.3.}
\begin{align*}
&\dots u(z_0-1)=\infty^{1}, u(z_{0})=0^{1}, u(z_0+1)=\infty^{1} , u(z_0+2)=0^{1} ,\\\nonumber
&u(z_0+3)=\infty^{1} ,u(z_0+4)=\infty^{1}\dots    
\end{align*}

\textbf{Case 3.4.} 
\begin{align*}
&\dots u(z_0-1)\not =\infty, u(z_{0})=0^{1} , u(z_0+1)=\infty^{1}, u(z_0+2)=0^{1} ,\\\nonumber
&u(z_0+3)=\infty^{1} ,u(z_0+4)\not =\infty \dots    
\end{align*}
We define two sets 
\begin{align}\label{N8}
&B_{1}:=\\\nonumber
&\left \{ z_{0}+2:u(z_0)=0, u(z_0+1)=\infty, u(z_0+2)=0, u(z_0+3)=\infty, u(z_0+4)=\infty \right \} \\\nonumber
\cup&\left \{ z_{0}:u(z_0)=0, u(z_0+1)=\infty, u(z_0+2)=0, u(z_0+3)=\infty, u(z_0+4)=\infty \right \},\\\nonumber  
&B_{2}=: \\\nonumber
&\left \{ z_{0}+2:u(z_0)=0, u(z_0+1)=\infty, u(z_0+2)=0, u(z_0+3)=\infty, u(z_0+4)\neq \infty \right \} \\\nonumber
\cup&\left \{ z_{0}:u(z_0)=0, u(z_0+1)=\infty, u(z_0+2)=0, u(z_0+3)=\infty, u(z_0+4)\neq \infty \right \}.\\\nonumber
\end{align}
We can similarly define two sets $\hat{B}_1$ and $\hat{B}_2,$ which are associated with the two sets $B_{1}$ and $B_{2}$ mentioned above.

\begin{align}\label{N9}
&\hat B_{1}:=\\\nonumber
&\left \{z_0+1:u(z_0)=0,u(z_0+1)=\infty, u(z_0+2)=0, u(z_0+3)=\infty, u(z_0+4)=\infty \right \}\\\nonumber  
 \cup&\left \{z_0+3:u(z_0)=0,u(z_0+1)=\infty, u(z_0+2)=0, u(z_0+3)=\infty, u(z_0+4)=\infty \right \}\\\nonumber
\cup& \left \{z_0+4:u(z_0)=0,u(z_0+1)=\infty, u(z_0+2)=0, u(z_0+3)=\infty, u(z_0+4)=\infty \right \}\\\nonumber
&\hat B_{2}: =\\\nonumber
&\left \{z_0+1:u(z_0)=0,u(z_0+1)=\infty, u(z_0+2)=0, u(z_0+3)=\infty, u(z_0+4)\not =\infty \right \}\\\nonumber  
\cup&\left \{z_0+3:u(z_0)=0,u(z_0+1)=\infty, u(z_0+2)=0, u(z_0+3)=\infty, u(z_0+4)\not =\infty \right \}\\\nonumber
\end{align}
By the definitions \eqref{N8} and \eqref{N9}, 
it is easy to verify that
\begin{align}\label{E866} 
n_{B_1} \left ( r,\frac{1}{u}\right)\le \frac{2}{3}n_{\hat B_1}(r+4,u),\\\nonumber
n_{B_2} \left ( r,\frac{1}{u}\right)\le n_{\hat B_2}(r+3,u).      
\end{align}
By integrating both sides of \eqref{E866}, we obtain that
\begin{align}\label{E867} 
N_{B_1} \left ( r,\frac{1}{u}\right)\le \left ( \frac{2}{3}+\varepsilon _3 \right ) N_{\hat B_1}(r+4,u),\\\nonumber
N_{B_2} \left ( r,\frac{1}{u}\right)\le(1+\varepsilon _4) N_{\hat B_2}(r+3,u).      
\end{align}
where $\varepsilon_3$
and $\varepsilon_4$ are arbitrary positive constants.

$\textbf{Step 4.}$ We assert that 
\begin{align}\label{E857}
N_{A_1}\left ( r,\frac{1}{u}\right)+N_{B_1}\left ( r,\frac{1}{u}\right)+N_{\mathbf{Step 1} }\left ( r,\frac{1}{u}\right )  =S(r,u),  
\end{align}
where the exceptional set associated to the error term $S(r,u)$ is of zero upper density. Assume on the contrary to the assertion that $N_{A_1}\left ( r,\frac{1}{u}\right)+N_{B_1}\left ( r,\frac{1}{u}\right)+N_{\mathbf{Step 1} }\left ( r,\frac{1}{u}\right )  \neq S(r,u)$. Then there exist a constant $k>0$ and a set $F$ of positive upper density, such that \begin{align}\label{E858}
N_{A_1}\left ( r,\frac{1}{u}\right)+N_{B_1}\left ( r,\frac{1}{u}\right)+N_{\mathbf{Step 1} }\left ( r,\frac{1}{u}\right )  \ge kT(r,u)+o(T(r,u))
\end{align}
as $r\longrightarrow \infty$ for all $r\in F.$
Combining \eqref{EN3}, \eqref{E856}, \eqref{E867}, and \eqref{E858}, it follows that \begin{align}\label{E860}
 &\left ( \frac{1}{2}+\varepsilon_1\right)N_{\hat{A}_1}(r+3,u)+\left(\frac{2}{3}+\varepsilon_{3}\right)N_{\hat{B}_{1}}(r+4,u)+\frac{1}{2}N_{\mathbf{Step 1}}(r+3,u)\\\nonumber
\ge&  N_{A_1}\left(r,\frac{1}{u}\right)+N_{B_1}\left(r,\frac{1}{u}\right)+N_{\mathbf{Step 1}}\left (r, \frac{1}{u}\right)+S(r,u)\\\nonumber
\ge& kT(r,u)+S(r,u).
\end{align}
It is easy to verify that 
\begin{align}\label{N4}
&N_{A_{1}}\left ( r,\frac{1}{u}\right)+ N_{A_{2}}\left(r,\frac{1}{u}\right)+ N_{B_{1}}\left(r,\frac{1}{u}\right)+ N_{B_{2}}\left(r,\frac{1}{u}\right)+ N_{\mathbf{Step 1}}\left ( r,\frac{1}{u}\right) \\\nonumber
=&N_{1}\left(r,\frac{1}{u}\right), 
\end{align}
and \begin{align}\label{N5}
&N_{\hat{A}_{1}}\left ( r,u\right)+ N_{\hat{A}_{2}}\left(r,u\right)+ N_{\hat{B}_{1}}\left(r,u\right)+ N_{\hat{B}_{2}}\left(r,u\right)+ N_{\mathbf{Step 1}}\left ( r,u\right) 
\leq N\left(r,u\right). 
\end{align}
By combining \eqref{E337},  \eqref{EN3}, \eqref{E856}, \eqref{E867}, \eqref{N4}, \eqref{N5}, and Lemma \ref{L2}, we obtain 
\begin{align}\label{E861}
&T(r,u)\\\nonumber
=&N_{1}\left(r,\frac{1}{u}\right)+S(r,u)\\\nonumber
=&N_{A_1} \left ( r,\frac{1}{u}\right)+N_{A_2} \left ( r,\frac{1}{u}\right)+N_{B_1} \left ( r,\frac{1}{u}\right)\\\nonumber
&+N_{B_2} \left ( r,\frac{1}{u}\right)+N_{\mathbf{Step 1}}\left ( r,\frac{1}{u}\right)+S(r,u)\\\nonumber
\leq &\left (\frac{1}{2}+\varepsilon_{1}\right)N_{\hat{A}_1} \left ( r+3,u\right)+\left (1+\varepsilon_{2}\right)N_{\hat{A}_2} \left (r+1,u\right)+\left (\frac{2}{3}+\varepsilon_{3}\right)N_{\hat{B}_1} \left ( r+4,u\right)\\\nonumber 
&+\left (1+\varepsilon_{4}\right)N_{\hat{B}_2} \left (r+3,u\right)+\frac{1}{2}N_{\mathbf{Step 1}}\left (r+3,u\right)+S(r,u).\\\nonumber 
\end{align}
Note that the facts are as follows:\begin{align}\label{N6}
\left (\frac{1}{2}+\varepsilon_{1}\right)N_{\hat{A}_{1}}(r+3,u)=(1+\varepsilon_{1})N_{\hat{A}_{1}}(r+3,u)-\frac{1}{2}N_{\hat{A}_{1}}(r+3,u),\\\nonumber
\left (\frac{2}{3}+\varepsilon_{3}\right)N_{\hat{B}_{1}}(r+4,u)=(1+\varepsilon_{3})N_{\hat{B}_{1}}(r+4,u)-\frac{1}{3}N_{\hat{B}_{1}}(r+4,u),\\\nonumber  
\frac{1}{2}N_{\mathbf{Step 1} _{1}}(r+3,u)\leq (1+\varepsilon )N_{\mathbf{Step 1} _{1}}(r+3,u)-\frac{1}{2}N_{\mathbf{Step 1} _{1}}(r+3,u).\\\nonumber
\end{align}
where 
$\varepsilon =\max \left \{\varepsilon_{1}, \varepsilon_{2},\varepsilon_{3}, \varepsilon_{4} \right \}.$
Substituting \eqref{N6} into \eqref{E861}, we have \begin{align}
&T(r,u)\\\nonumber
\leq & (1+\varepsilon)\left [ N_{\hat{A}_{1}}(r+3,u)+N_{\hat{A}_{2}}(r+1,u)+N_{\hat{B}_{1}}(r+4,u) \right ] \\\nonumber
  &+(1+\varepsilon)\left [ N_{\hat{B}_{2}}(r+3,u)+N_{\mathbf{Step 1}}(r+3,u) \right ]-\frac{1}{2}N_{\hat{A}_{1}}(r+3,u)\\\nonumber  
&-\frac{1}{3}N_{\hat{B}_{1}}(r+4,u)-\frac{1}{2}N_{\mathbf{Step 1}}(r+3,u)+S(r,u)\\\nonumber  
\leq & (1+\varepsilon)N(r,u)-\frac{1}{2}N_{\hat{A}_{1}}(r+3,u)-\frac{1}{3}N_{\hat{B}_{1}}(r+4,u)\\\nonumber
&-\frac{1}{2}N_{\mathbf{Step 1}}(r+3,u)+S(r,u)\\\nonumber 
\leq & (1+\varepsilon)T(r,u)-\frac{1}{2}N_{\hat{A}_{1}}(r+3,u)-\frac{1}{3}N_{\hat{B}_{1}}(r+4,u)\\\nonumber
&-\frac{1}{2}N_{\mathbf{Step 1}}(r+3,u)+S(r,u),\\\nonumber 
\end{align}
which implies 
\begin{align*}
\frac{1}{2}N_{\hat{A}_{1}}(r+3,u)+\frac{1}{3}N_{\hat{B}_{1}}(r+4,u)+\frac{1}{2}N_{\mathbf{Step 1}}(r+3,u)\leq \varepsilon T(r,u).
\end{align*}
This contradicts \eqref{E860}. Therefore, the assertion follows. Combining \eqref{E337}, \eqref{E857}, and \eqref{N4}, we have
\begin{align}\label{E865}
N_{A_2}\left ( r,\frac{1}{u}\right)+N_{B_2}\left ( r,\frac{1}{u}\right)=T(r,u)+S(r,u).  
\end{align}

By \eqref{E856}, we find that the distribution of zeros of $u(z)$ is a combination of $A_{2}$ and $B_{2}.$ Additionally, there are at most $S(r,u)$ points that are  considered exceptional. Next, we will proceed to prove that for any $z\in \mathbb{C},$ \begin{align}\label{N7}
\omega(z)=:
u'(z)-u(z)^{2}-C(z)u(z)\neq \infty,   
\end{align}
and there exist at most $S(r,u)$ points such that $
u'(z)-u(z)^{2}-C(z)u(z)= \infty.$

\textbf{Step 5.} All points for which $C(z_{0})=0$ can be included in an error term $S(r,u).$ Therefore, we will only consider cases where $C(z_{0})\neq 0$ in following text. For any $z\in \mathbb{C}$ such that $u(z)\neq \infty,$ the assertion \eqref{N7} is evident. If $u(z)=\infty,$ then by \eqref{E865}, we have $u(z-1)=0.$ 
Let $z=z_{0}$ is an arbitrary generic zero of $u(z).$ Then we only need to prove that 
$$u'(z_{0}+1)-u(z_{0}+1)^{2}-C(z_{0}+1)u(z_{0}+1)\neq \infty.$$
Combining \eqref{E334}, \eqref{E830}, and \eqref{E831}, we have \begin{align}\label{N10}
&u'(z_{0}+1)-u(z_{0}+1)^{2}-C(z_{0}+1)u(z_{0}+1)\\\nonumber
=& \frac{1}{(z-z_{0})^{2}}-\frac{1}{(z-z_{0})^{2}}+ \frac{2\eta_{0}}{z-z_{0}}-C(z_{0}+1)\frac{-1}{z-z_{0}}+O(1)\\\nonumber 
=&\frac{2\eta_{0}+C(z_{0}+1)}{z-z_{0}}+O(1). 
\end{align}
On the other hand, by \eqref{E832} and \eqref{E865}, we conclude that $2\eta_{0}+C(z_{0}+1)=0.$ The assertion \eqref{N7} follows.

\textbf{Step 6.} By \eqref{E835}, we have $m(r,u)=S(r,u).$ By combining this with \eqref{L3}, we can determine that 
\begin{align}\label{N11}
 m(r, u'(z)-u(z)^{2}-C(z)u(z))=S(r,u).   
\end{align}
 Combining \eqref{N7} with \eqref{N11}, it follows that $$T(r,\omega (z))=T(r, u'(z)-u(z)^{2}-C(z)u(z))=S(r,u).$$
From the assumption at the beginning of the proof, we have that   $u(z)=a_0(z)f(z)-a_0(z)a_4(z).$
Therefore, \eqref{N7} becomes
\begin{align*} 
{f}'(z)+L_{1}f(z)+L_{2}f(z)^{2} =L_{3}(z),  
\end{align*}
where 
\begin{align*}
\begin{cases}
 L_{1}(z)=\frac{a'_0(z)+2a_0(z)a_{4}(z)-a_0(z)C(z)}{a_0(z)},\\
 L_{2}(z)=-a_0(z),\\
 L_{3}(z)=\frac{\omega(z)+{a_4}'(z)+a_4(z)^{2}-a_4(z)C(z)}{a_0(z)}, 
\end{cases}
\end{align*}
and $T(r,L_3(z))=S(r,f(z)).$
\end{proof}

\section{Proof of Theorem \ref {T2}}\label{sec7}
The following two lemmas are used to prove Theorem \ref{T2}. If equation \eqref{E8} has solutions that are entire exponential polynomials of the type \eqref{E690}, then we need Lemma \ref{L12} below to analyze the equation. Lemma \ref{L13} below is an auxiliary result needed to prove (ii).

\begin{lemma}\label{L12}\cite[Corollary 4,2]{bib38}
Let $q$ be a positive integer, $a_{0}(z),$ \dots, $a_{n}(z)$  be either exponential polynomials of degree $<q$ or ordinary polynomials in $z,$ and $b_{1},\dots,b_{n}\in \mathbb{C}\setminus \left \{ 0 \right \}$ be distinct constants. Then $$\sum_{j=1}^{n}a_{j}(z)e^{b_{j}z^{q}}=a_{0}(z) $$ holds 
only when $a_{0}(z)\equiv \dots \equiv a_{n}(z)\equiv 0.  $
\end{lemma}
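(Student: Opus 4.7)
The plan is to reformulate the equation in homogeneous form and then argue by ray-asymptotics combined with induction on $n$. Setting $b_{0}:=0$, $c_{0}(z):=-a_{0}(z)$, and $c_{j}(z):=a_{j}(z)$ for $1\le j\le n$, the identity becomes
\begin{equation*}
\sum_{j=0}^{n}c_{j}(z)\,e^{b_{j}z^{q}}=0,
\end{equation*}
where the constants $b_{0},b_{1},\dots,b_{n}$ are pairwise distinct and every coefficient $c_{j}$ is either an ordinary polynomial or an exponential polynomial of degree strictly less than $q$. In particular, along every ray $\arg z=\phi$ the coefficient satisfies $\log|c_{j}(re^{i\phi})|=o(r^{q})$ as $r\to\infty$.

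The key step is to force one coefficient to vanish by selecting a direction in which one exponential strictly dominates the others. Set $h_{j}(\phi):=\operatorname{Re}(b_{j}e^{iq\phi})$, so that $|e^{b_{j}z^{q}}|=\exp(r^{q}h_{j}(\phi))$. Since the $b_{j}$'s are distinct, the trigonometric polynomials $h_{0},\dots,h_{n}$ are pairwise distinct, and for each index $k$ one can find an open arc of angles $\phi^{*}$ at which $h_{k}(\phi^{*})>h_{j}(\phi^{*})$ for every $j\ne k$. Dividing the identity by $e^{b_{k}z^{q}}$ yields
\begin{equation*}
c_{k}(z)=-\sum_{j\ne k}c_{j}(z)\,e^{(b_{j}-b_{k})z^{q}},
\end{equation*}
and along any ray $\arg z=\phi^{*}$ each summand on the right decays like $\exp(-\delta r^{q})$ for some $\delta>0$, multiplied by a factor that is subexponential of order $q$. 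Thus the right-hand side decays exponentially of order $q$ along this ray, while $c_{k}$ itself grows at most subexponentially in order $q$. Comparing both sides forces $c_{k}(re^{i\phi^{*}})\to0$ at an exponential rate of order $q$, which is incompatible with $c_{k}$ being a nonzero ordinary polynomial or a nonzero exponential polynomial of degree less than $q$. Hence $c_{k}\equiv 0$.

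With one coefficient eliminated, the remaining identity has the same form but with $n$ replaced by $n-1$, so the proof concludes by induction on $n$, the base case $n=0$ being the trivial statement $c_{0}\equiv 0$. The main technical obstacle is the geometric claim that for \emph{each} $k$ there is an open sector in which $h_{k}$ is the strict maximum, not merely that some index wins somewhere. This is handled by a Phragm\'en--Lindel\"of / Borel-type observation: since the $b_{j}$'s are distinct, the $h_{j}$'s are distinct trigonometric polynomials in $q\phi$, so their piecewise maximum is continuous and partitions the unit circle into finitely many arcs on each of which a unique $h_{j}$ is strictly largest. A direct Fourier-coefficient computation rules out the pathological possibility that some $h_{k}$ is dominated by $\max_{j\ne k}h_{j}$ everywhere, which would contradict the distinctness of the $b_{j}$'s. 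Once this sector-decomposition is in place, the rest of the argument is entirely routine.
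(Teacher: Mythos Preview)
The paper does not prove this lemma itself; it is cited from \cite[Corollary~4.2]{bib38}, so there is no in-paper argument to compare against. On the soundness of your proposal: the assertion that \emph{every} index $k$ has an open arc where $h_{k}(\phi)=\operatorname{Re}(b_{k}e^{iq\phi})$ is the strict maximum is false. Take $b_{0}=0$, $b_{1}=1$, $b_{2}=2$, so that $h_{0}\equiv 0$, $h_{1}=\cos(q\phi)$, $h_{2}=2\cos(q\phi)$; then $h_{1}$ is never strictly largest, and no ``Fourier-coefficient computation'' can change that since the $b_{j}$ are already distinct. This defect is not fatal, because your induction only needs \emph{some} index to win at each stage, and the one with maximal $|b_{k}|$ always does.

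The genuine gap is the inference ``$c_{k}$ decays like $e^{-\delta r^{q}}$ in an open sector and has order $<q$, hence $c_{k}\equiv 0$''. Because every $h_{j}$ is $2\pi/q$-periodic in $\phi$, the winning set for the chosen index is a union of $q$ arcs; when the $b_{j}$ are crowded (for instance $n$th roots of unity with $n$ large, together with $b_{0}=0$) each complementary arc has opening close to $2\pi/q$. Phragm\'en--Lindel\"of in such an arc requires order strictly below $\pi/(2\pi/q)=q/2$, whereas $c_{k}$, being an exponential polynomial of degree $\le q-1$, can have order exactly $q-1$; for $q\ge 2$ the bound $q-1<q/2$ fails and the argument collapses. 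Decay in a thin sector simply does not force a nonzero entire function of order $q-1$ to vanish. The standard proofs of this Borel-type result avoid ray asymptotics altogether and proceed through Nevanlinna theory, comparing $T(r,\cdot)$ of the two sides via the second main theorem or a Cartan/Wronskian argument.
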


\begin{lemma}\label{L13}\cite [Lemma 1.1.2.]{bib27}
Let $g:(0,+\infty)\longrightarrow \mathbb{R}, h:(0,+\infty)\longrightarrow \mathbb{R}$ be monotone increasing functions such that $g(r)\le h(r)$ outside of an exceptional set $E$ of finite logarithmic measure. Then, for any $\alpha>1,$ there exists $r_{0}>0$ such that $g(r)\le h(r^{\alpha})$ holds for all $r>r_{0} .$
\end{lemma}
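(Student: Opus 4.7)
The plan is to exploit the gap between logarithmic measure of the exceptional set and the logarithmic length of an interval of the form $[r, r^{\alpha}]$. Since $E$ has finite logarithmic measure, $\int_{E}\frac{dt}{t} < \infty$; on the other hand, the logarithmic measure of $[r, r^{\alpha}]$ equals $(\alpha-1)\log r$, which tends to $+\infty$ as $r\to\infty$. So for all sufficiently large $r$, the interval $[r, r^{\alpha}]$ cannot be entirely contained in $E$, and there must exist some $s \in [r, r^{\alpha}] \setminus E$.

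The key steps I would carry out are as follows. First I would fix $\alpha > 1$ and, using the finiteness of $\int_{E}\frac{dt}{t}$, choose $r_{0}>0$ so large that
\[
\int_{E\cap [r_{0},\infty)} \frac{dt}{t} < (\alpha-1)\log r_{0}.
\]
Then for any $r \geq r_{0}$ the logarithmic measure of $[r, r^{\alpha}] \cap E$ is strictly less than $(\alpha-1)\log r$, which is precisely the full logarithmic measure of $[r, r^{\alpha}]$. Hence $[r, r^{\alpha}] \setminus E \neq \emptyset$, and we can pick some $s = s(r) \in [r, r^{\alpha}] \setminus E$. For this $s$ the hypothesis gives $g(s) \leq h(s)$, and monotonicity of $g$ and $h$ gives
\[
g(r) \leq g(s) \leq h(s) \leq h(r^{\alpha}),
\]
which is the desired conclusion.

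There is no real obstacle to this proof beyond verifying the logarithmic-measure comparison carefully; the argument is essentially a direct pigeonhole application of the definition of finite logarithmic measure together with monotonicity. The only subtle point worth stating explicitly is why $r_{0}$ can be chosen uniformly for the given $\alpha$: the integral $\int_{E\cap [R,\infty)}\frac{dt}{t}$ is a decreasing function of $R$ tending to $0$, while $(\alpha-1)\log R$ tends to $+\infty$, so the required inequality holds from some point onward. Everything else is bookkeeping.
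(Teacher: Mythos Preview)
Your proof is correct and is precisely the standard argument for this classical lemma. The paper does not supply its own proof; it merely cites the result from Laine's book \cite[Lemma~1.1.2]{bib27}, where the proof is exactly the logarithmic-measure pigeonhole argument you give: for $r$ large the interval $[r,r^{\alpha}]$ has logarithmic length $(\alpha-1)\log r$, eventually exceeding the remaining logarithmic measure of $E$, so some $s\in[r,r^{\alpha}]\setminus E$ exists and monotonicity finishes the chain $g(r)\le g(s)\le h(s)\le h(r^{\alpha})$.
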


\begin{proof}[Proof of Theorem~\ref{T2}]
(i). If at least one of \(a(z)\), \(b(z)\) and \(c(z)\) is a non-constant rational function,  we now assume that $N(r,f(z))\not=S(r,f(z)),$ which implies $f(z)$ has generic poles. Let $E$ be the set of the orders of all generic poles of $f(z).$ The minimum $m$ of the set we call the index of the function $f(z).$ Clearly, the function ${f}'(z)-b(z)f(z)$ has the index $m+1,$ but the index of the right hand side of \eqref{E8} is $m,$ which is a contradiction. We can therefore obtain $$N(r,f(z))=S(r,f(z)).$$ 
Suppose that all coefficient functions of \eqref{E8} are constants. Then all poles of $f(z)$ are generic poles. If $N(r,f(z))\not = 0,$ using  exactly the same reasoning as above, we can find that this is a contradiction.

(ii). 
We  assume that $f(z)$ is a transcendental entire  function with  $\sigma (f)<1.$ 
According to  the Wiman–Valiron theory\cite[p.187-199]{bib25} and \cite[p.28-30]{bib22} and the difference analogue of the Wiman–Valiron theory\cite[Theorem 3]{bib11}
we get 
\begin{align}\label{E621}
\begin{cases}
\frac{{f}'(z)}{f(z)}=\frac{\nu (r,f)}{z}(1+o(1)),\\
\frac{f(z+1)-f(z)}{f(z)}=\frac{\nu (r,f)}{z}(1+o(1)),
\end{cases}
\end{align}
for all $\left | z \right |= r\not \in E\cup [0,1],$ where $E$ is set of  finite logarithmic measure. We can rewrite \eqref{E8} in the form \begin{align}\label{E640}
\frac{{f}'(z)}{f(z)}=a(z)\frac{f(z+1)}{f(z)}+b(z)+\frac{c(z)}{f(z)}.    
\end{align}
By applying \eqref{E621} to \eqref{E640} results in  \begin{align}\label{E641} 
(1-a(z))\left ( \frac{\nu(r,f)}{z}(1+o(1))\right)=a(z)+b(z)+\frac{c(z)}{f(z)}.  
\end{align} 
We can further rewrite \eqref{E641} as \begin{align}\label{E642}
f(z)&=\frac{c(z)}{(1-a(z))\left ( \frac{\nu (r,f)}{z}(1+o(1))\right)-a(z)-b(z)}= \frac{\frac{c(z)}{1-a(z)}}{\frac{\nu (r,f)}{z}(1+o(1))-\frac{a(z)+b(z)}{1-a(z)}  }.  
\end{align}
Note that $a(z),$ $b(z),$ $c(z)$ are rational functions, and $a(z)\not\equiv 1,$ $c(z)\not \equiv 0.$ Then we can obtain the following form   $$\frac{c(z)}{1-a(z)}=Az^{n}(1+o(1)),\frac{a(z)+b(z)}{1-a(z)}=Bz^{m}(1+o(1)),$$ where $A,$ $B$ are complex constants and $m,$ $n$ are integers. On the other hand, due to what we have assumed, $$\sigma(f)=\limsup_{r \to \infty}\frac{\log ^{+}\nu (r,f) }{\log r}<1,$$
 which means $\nu (r,f)=O(r^{L}),L< 1.$ 
 Letting $\left |z\right|=r \longrightarrow \infty (r\notin E \cup \left [ 0,1 \right ])$, equation \eqref{E642} becomes 
\begin{align}\label{E643}
\left|f(z)\right|=&\frac{\left|A\right|\left|z\right|^{n}(1+o(1))}{\left | \frac{\nu (r,f)}{z}(1+o(1))-Bz^{m}(1+o(1))\right|}\\\nonumber  
=&\frac{\left |A\right|\left |z\right|^{n}(1+o(1))}{\left | Bz^{m}(1+o(1))-\frac{\nu (r,f)}{z}(1+o(1))\right|}\\\nonumber   
\le& \frac{\left |A\right|\left|z\right|^{n}(1+o(1))}{\left|B\right|\left |z\right|^{m}(1+o(1))-\frac{\nu(r,f)}{\left|z\right|}(1+o(1)) }\\\nonumber  
=&\left | \frac{A}{B}\right|\left|z\right|^{n-m}(1+o(1)). \end{align}
Therefore, we have \begin{align}\label{E770} 
\underset{r=\left|z\right|\not \in E\cup [0,1] }{M(r,f)}\le 2\left|\frac{A}{B}\right|\left|z\right|^{n-m}.
\end{align}
Next, we state that we can obtain a similar estimate for $M(r,f)$ when the exceptional set $E\cup [0,1]$ is taken into account.
 By Lemma \ref{L13} and \eqref{E770}, it follows that \begin{align}\label{E771} 
M(r,f)\le 2\left |\frac{A}{B}\right|(r^{\alpha})^{n-m}  
\end{align}
holds for all $\left|z\right|=r>r_{0},$ where $\alpha>1.$ 
Combining the generalized Liouville theorem \cite[86-87]{bib43}, \eqref{E771}, and the
Maximum modulus principle
it follows that $f(z)$ is a polynomial when the order of $f(z)$ is strictly less than $1$. This is a contradiction, since we have assumed that $f(z)$ is transcendental. Therefore, the order of $f(z)$ is at least $1.$

(iii) 

(a). Since $c(z)\equiv 0,$
\eqref{E8} can be rewritten in the form 
\begin{align}\label{E644}
\frac{{f}'(z)}{f(z)}=a(z)\frac{f(z+1)}{f(z)}+b(z).
\end{align}
We can see that $f(z)$ must be a entire function by (i). We assume now that $f(z)$ is an transcendental entire  solution with $\sigma(f)<1.$ Then by \eqref{E621}, \eqref{E644} takes the form\begin{align}\label{E645}
\frac{\nu(r,f)}{z}(1+o(1))=a(z)\left [ 1+\frac{\nu (r,f)}{z}(1+o(1))\right]+b(z).
\end{align}
By a simple calculation, \eqref{E645} can be written as \begin{align}\label{E646}
\frac{\nu (r,f)}{z}(1+o(1))=\frac{a(z)+b(z)}{1-a(z)}.  
\end{align}
Letting $r=\left |z\right|\longrightarrow\infty (r\not \in E\cup [0,1]),$ we can easily see that the left-hand side of \eqref{E646} equals 0, which yields a contradiction, since $a(z)+b(z)$ is non-zero complex constant. Hence $\sigma (f)\ge 1.$

(b). Since the non-zero complex constant $k$ is a Borel exceptional value of $f(z),$ we have \begin{align}\label{E647} 
\lambda (f-k)=\limsup_{r\to\infty}\frac{\log N\left(r,\frac{1}{f-k}\right)}{\log r}< \sigma (f)= \limsup_{r\to\infty}\frac{\log T\left(r,f\right)}{\log r}. 
\end{align}
Note that $a(z)$, $b(z)$ and $c(z)$ are constants, and thus all meromorphic solutions of \eqref{E8} must be entire by (i). Therefore $f(z)$ is of regular growth. Assuming that $\lambda (f-k)<\alpha <\beta <\sigma (f),$ we can now rewrite \eqref{E647} as \begin{align}\label{E648} 
\limsup_{r\to\infty}\frac{\log N\left(r,\frac{1}{f-k}\right)}{\log r}<\alpha <\beta <  \limsup_{r\to\infty}\frac{\log T\left(r,f\right)}{\log r}=\lim_{r\to\infty}\frac{\log T\left(r,f\right)}{\log r} . 
\end{align} 
Thus $$\frac{N(r,\frac{1}{f-k})}{T(r,f)}\le \frac{r^{\alpha}}{r^{\beta}}\longrightarrow 0,$$
as $r\longrightarrow \infty.$ So we get $N\left (r,\frac{1}{f-k}\right)=S(r,f).$ By the Weierstrass factorization theorem of entire functions, we may assume that $f(z)=p(z)e^{Q(z)}+k,$ where $p(z)$ is a meromorphic function such that  $T(r,p(z))=S(r,f)$ and $Q(z)$ is a polynomial with degree at most $\sigma(f).$ Then \eqref{E8} can be expressed as 
\begin{align}\label{E649} 
e^{Q(z)}\left [{p}'(z)+p(z){Q}'(z)-ap(z+1)e^{Q(z+1)-Q(z)}-bp(z)\right]=k(a+b)+c. 
\end{align}
Note that $f(z)$ is of non-vanishing finite order, therefore $Q(z)$ cannot be reduced to a constant.
Therefore, the only way that equation \eqref{E649} can remain valid is that both sides are identically zero, and so it follows that $k=\frac{-c}{a+b}.$

(iv). Assume on the contrary to the assertion that \eqref{E8}   has exponential polynomial as its solution, and it satisfies that $Q_{j}(z)$ cannot reduce into a constant for all integer $j$.
Note that $f(z)=P_{1}(z)e^{Q_{1}(z)}+\dots+P_{k}(z)e^{Q_{k}(z)},$ $f(z+1)=P_{1}(z+1)e^{Q_{1}(z+1)}+\dots+P_{k}(z+1)e^{Q_{k}(z+1)},$ $f'(z)=e^{Q_{1}(z)}\left [ P'_{1}(z)+P_{1}(z)Q'_{1}(z)\right]+\dots+e^{Q_{k}(z)}\left [P'_{k}(z)+P_{k}(z)Q'_{k}(z)\right]    .$
Then \eqref{E8} can be written in the form 
\begin{align}\label{E692} 
&e^{Q_{1}(z)}\left [ P'_{1}(z)+P_{1}(z)Q_{1}'(z)-a(z)P_{1}(z+1)\frac{e^{Q_{1}(z+1)}}{e^{Q_{1}(z)}}-b(z)P_{1}(z) \right]\\\nonumber
+&\dots +e^{Q_{k}(z)}\left [ P'_{k}(z)+P_{k}(z)Q_{k}'(z)-a(z)P_{k}(z+1)\frac{e^{Q_{k}(z+1)}}{e^{Q_{k}(z)}}-b(z)P_{k}(z)\right]\\\nonumber 
&= c(z).
\end{align}
Let $P'_{j}(z)+P_{j}(z)Q'_{j}(z)-a(z)P_{j}(z+1)\frac{e^{Q_{j}(z+1)}}{e^{Q_{j}(z)}}-b(z)P_{j}(z)=\Delta_{j}(z),1\le j\le k.$
Then \eqref{E692} can rewritten as 
\begin{align}\label{E707} 
e^{Q_{1}(z)}\Delta_{1}(z)+\dots + e^{Q_{k}(z)}\Delta_{k}(z)=c(z).  
\end{align}
Without loss of generality,
we suppose that \begin{align}\label{E696} 
\deg_{z}Q_{1}(z)\ge\deg_{z}Q_{2}(z)\dots\ge \deg_{z}Q_{k}(z).
\end{align}

\textbf{Case 1.}
We assume that 
\begin{align}\label{E697} 
\deg_{z}Q_{1}(z)>\deg_{z}Q_{2}(z)>\deg_{z}Q_{3}(z)>\dots> \deg_{z}Q_{k}(z).
\end{align}
Then \eqref{E707} takes the following form 
$$e^{Q_{1}(z)}\Delta_{1}(z)=c(z)-e^{Q_{2}(z)}\Delta_{2}(z)-\dots-e^{Q_{k}(z)}\Delta_{k}(z).$$
By using Lemma \ref{L12},  we get \begin{align}\label{E730} 
e^{Q_{2}(z)}\Delta_{2}(z)+\dots+e^{Q_{k}(z)}\Delta_{k}(z)=c(z),
\end{align} 
and \eqref{E730} can further be rewritten as \begin{align}\label{E732} 
e^{Q_{2}(z)}\Delta_{2}(z)=c(z)-e^{Q_{3}(z)}\Delta_{3}(z)-\dots-e^{Q_{k}(z)}\Delta_{k}(z).
\end{align}
By Lemma \ref{L12} again, we have \begin{align}\label{E733} 
e^{Q_{3}(z)}\Delta_{3}(z)+\dots+e^{Q_{k}(z)}\Delta_{k}(z)=c(z).   
\end{align}
By using Lemma \ref{L12} altogether $k-1$ times, we can finally obtain $e^{Q_{k}(z)}\Delta_{k}(z)=c(z).$ Obviously, 
this is a contradiction with $c(z)\not\equiv 0.$ Therefore, our assumption is not valid.

\textbf{Case 2.}
We assume that \begin{align}\label{E712} 
\deg_{z}Q_{1}(z)=\deg_{z}Q_{2}(z)=\deg_{z}Q_{3}(z)=\dots=\deg_{z}Q_{k}(z).
\end{align} 
Then we can rewrite \eqref{E707} as 
\begin{align}\label{E713} 
e^{Q_{1}(z)}(\Delta_1(z)+e^{Q_{2}(z)-Q_{1}(z)}\Delta_{2}(z)+\dots e^{Q_{k}(z)-Q_{1}(z)}\Delta _{k}(z))=c(z). 
\end{align}
Note that the relationship between $\deg_{z}(Q_{j}(z)-Q_{1}(z))$ and $\deg_{z}Q_{1}(z)$ is either $\deg_{z}(Q_{j}(z)-Q_{1}(z))=\deg_{z}Q_{1}(z)$ or $\deg_{z}(Q_{j}(z)-Q_{1}(z))<\deg_{z}Q_{1}(z)$ for all $2\le j \le k.$ In what follows, we discuss these cases separately.

 \textbf{Subcase 2.1.}
Let $\deg_{z}(Q_{j}(z)-Q_{1}(z))<\deg_{z}Q_{1}(z)   $ for all $2\le j\le k.$ Using Lemma \ref{L12} for \eqref{E713}, it follows that $c(z)\equiv 0$. This is in contradiction with assumption $c(z)\not\equiv0.$

\textbf{Subcase 2.2.} 
There exist $Q_{j}(z)$ and $Q_{i}(z)$ such that $\deg_{z}(Q_{j}(z)-Q_{1}(z))=\deg_{z}Q_{1}(z)$ and $\deg_{z}(Q_{i}(z)-Q_{1}(z))<\deg_{z}Q_{1}(z)$ occur at the same time. We suppose now that 
\begin{align}\label{E720}
\begin{cases}
 \deg_{z}(Q_{2}(z)-Q_{1}(z))=\deg_{z}Q_{1}(z),\\
 \deg_{z}(Q_{3}(z)-Q_{1}(z))=\deg_{z}Q_{1}(z),\\
\dots \\    
\deg_{z}(Q_{m}(z)-Q_{1}(z))=\deg_{z}Q_{1}(z),     
\end{cases}
\end{align}
and \begin{align}\label{E721}
\begin{cases}
 \deg_{z}(Q_{m+1}(z)-Q_{1}(z))<\deg_{z}Q_{1}(z),\\
 \deg_{z}(Q_{m+2}(z)-Q_{1}(z))<\deg_{z}Q_{1}(z),\\
\dots \\    
\deg_{z}(Q_{k}(z)-Q_{1}(z))<\deg_{z}Q_{1}(z).     
\end{cases}
\end{align}
Then we can rewrite \eqref{E707} as \begin{align}\label{E722}
&e^{Q_{2}(z)}\Delta_{2}(z)+\dots+e^{Q_{m}(z)}\Delta_{m}(z)\\\nonumber
&\quad+e^{Q_{1}(z)}(\Delta_{1}(z)+e^{Q_{m+1}(z)-Q_1(z)}\Delta_{m+1}(z)+\dots+e^{Q_{k}(z)-Q_1(z)}\Delta_{k}(z))\equiv c(z).   
\end{align}
We can see that either $\deg_{z}(Q_{l}-Q_{2})(3\le l\le m)=\deg_{z}Q_{2}$ or $\deg_{z}(Q_{l}-Q_{2})<\deg_{z}Q_{2}.$
Suppose now that \begin{align}\label{E724}
\begin{cases}
 \deg_{z}(Q_{3}(z)-Q_{2}(z))=\deg_{z}Q_{2}(z),\\
 \deg_{z}(Q_{4}(z)-Q_{2}(z))=\deg_{z}Q_{2}(z),\\
\dots \\    
\deg_{z}(Q_{p}(z)-Q_{2}(z))=\deg_{z}Q_{2}(z),     
\end{cases}
\end{align}
and \begin{align}\label{E725}
\begin{cases}
 \deg_{z}(Q_{p+1}(z)-Q_{2}(z))<\deg_{z}Q_{2}(z),\\
 \deg_{z}(Q_{p+2}(z)-Q_{2}(z))<\deg_{z}Q_{2}(z),\\
\dots \\    
\deg_{z}(Q_{m}(z)-Q_{2}(z))<\deg_{z}Q_{2}(z).     
\end{cases}
\end{align}
Then \eqref{E722} can be further rewritten as 
\begin{align}\label{E723}
&e^{Q_{3}(z)}(\Delta_{3}(z)+e^{Q_{4}(z)-Q_{3}(z)}\Delta_{4}(z) +\dots+e^{Q_{p}(z)-Q_{3}(z)}\Delta_{p}(z))\\\nonumber
+&e^{Q_{2}(z)}(\Delta_{2}(z)+e^{Q_{p+1}(z)-Q_{2}(z)}\Delta _{_{p+1}}(z)+\dots+e^{Q_{m}(z)-Q_{2}(z)}\Delta_{_{m}}(z))\\\nonumber
+&e^{Q_{1}(z)}(\Delta_{1}(z)+e^{Q_{m+1}(z)-Q_1(z)}\Delta_{m+1}(z)+\dots+e^{Q_{k}(z)-Q_1(z)}\Delta_{k}(z))\equiv c(z).   
\end{align}
We now need to consider the relationship between $\deg_{z}(Q_{e}(z)-Q_{3}(z))(4\le e\le p)$ and $\deg_{z}Q_{3}(z).$ Let 
\begin{align}
\begin{cases}
\deg_{z}(Q_{4}(z)-Q_{3}(z))=\deg_{z}Q_{3}(z),\\
\deg_{z}(Q_{5}(z)-Q_{3}(z))=\deg_{z}Q_{3}(z), \\
\dots\\
\deg_{z}(Q_{v}(z)-Q_{3}(z))=\deg_{z}Q_{3}(z), 
\end{cases}
\end{align}
and \begin{align}
\begin{cases}
\deg_{z}(Q_{v+1}(z)-Q_{3}(z))<\deg_{z}Q_{3}(z),\\
\deg_{z}(Q_{v+2}(z)-Q_{3}(z))<\deg_{z}Q_{3}(z), \\
\dots\\
\deg_{z}(Q_{p}(z)-Q_{3}(z))<\deg_{z}Q_{3}(z).
\end{cases}
\end{align}
Then \eqref{E723} can be expressed as 
\begin{align}\label{E740}
&e^{Q_{4}(z)}\Delta_{4}(z)+\dots+e^{Q_{v}(z)\Delta_{v}(z)}\\\nonumber
+&e^{Q_{3}(z)}(\Delta_{3}(z)+e^{Q_{v+1}(z)-Q_{3}(z)}\Delta_{v+1}(z) +\dots+e^{Q_{p}(z)-Q_{3}(z)}\Delta_{p}(z))\\\nonumber
+&e^{Q_{2}(z)}(\Delta_{2}(z)+e^{Q_{p+1}(z)-Q_{2}(z)}\Delta _{_{p+1}}(z)+\dots+e^{Q_{m}(z)-Q_{2}(z)}\Delta_{_{m}}(z))\\\nonumber
+&e^{Q_{1}(z)}(\Delta_{1}(z)+e^{Q_{m+1}(z)-Q_1(z)}\Delta_{m+1}(z)+\dots+e^{Q_{k}(z)-Q_1(z)}\Delta_{k}(z))\equiv c(z).   
\end{align}
By \eqref{E720} and \eqref{E724}
we have $$\deg_{z}(Q_{2}(z)-Q_{1}(z))=\deg_{z}(Q_{3}(z)-Q_{1}(z))=\deg_{z}(Q_{3}(z)-Q_{2}(z))=\deg_{z}Q_{1}(z),    $$ which implies that the leading coefficients of $Q_{1}(z) ,$ $Q_{2}(z) $ and $Q_{3}(z) $ are pairwise different.
Therefore, if we repeat the process, we can finally obtain that \eqref{E723} becomes \begin{align}\label{E727}
\varpi_{1}(z)e^{Q_{1}(z)}+\varpi_{2}(z)e^{Q_{2}(z)}+\dots+\varpi_{m}(z)e^{Q_{m}(z)}=c(z),   
\end{align}
where $\varpi_{j}(z)(1\le j\le m)$ is an exponential polynomial with degree strictly less than $\deg_{z}Q_{1}(z)=\dots=\deg_{z}Q_{k}(z)$ or an ordinary polynomial in $z.$ Note that the leading coefficients of $Q_{1}(z) ,$ \dots $Q_{m}(z)$ are pairwise different. By applying Lemma \ref{L12} to
\eqref{E727} results in  $c(z)\equiv0,$ and this is a contradiction with our assumption. Hence this case is not possible.

\textbf{Case 3.}
Note that case $1$ and case $2$ are not possible. Then this case is a  combination of case $1$ and case $2.$ Without loss of generality, we assume that \begin{align}\label{E735}
&\deg_{z}Q_{1}(z)>\dots>\deg_{z}Q_{l}(z)=\dots =\deg_{z}Q_{p}(z)\\\nonumber
>&\deg_{z}Q_{p+1}(z)>\dots >\deg_{z}Q_{k}(z).
\end{align}
In this case, \eqref{E707} can be written in the following form:
\begin{align}\label{E745}
 &e^{Q_{1}(z)}\Delta_{1}(z)+\dots+e^{Q_{l-1}(z)}\Delta _{l-1}(z)\\\nonumber
+&e^{Q_{l}(z)}(\Delta _{l}(z)+e^{Q_{l+1}(z)-Q_{l}(z)}\Delta _{l+1}(z)+\dots+e^{Q_{p}(z)-Q_{l}(z)}\Delta_{p}(z))\\\nonumber
+&e^{Q_{p+1}(z)}\Delta_{p+1}(z)+\dots+e^{Q_{k}(z)}\Delta_{k}(z)=c(z).\nonumber     
\end{align}
Since $Q_1(z)$ is the polynomial of the highest degree in $z$ for $Q_j(z)(1\le j\le k)$ and $\Delta_{j}(z)(1\le j\le k)  $ are small functions of $e^{Q_{j}(z)} (1\le j\le k),$ it follows that $\Delta_{1}(z)\equiv 0.$
In the same way, we have that  $\Delta_{2}(z)=\dots=\Delta_{l-1}(z)\equiv 0.$
Then \eqref{E745} becomes 
\begin{align}\label{E760}
&e^{Q_{l}(z)}(\Delta_{l}(z)+e^{Q_{l+1}(z)-Q_{l}(z)}\Delta _{l+1}(z)+\dots+e^{Q_{p}(z)-Q_{l}(z)}\Delta_{p}(z))\\\nonumber
=&c(z)-e^{Q_{p+1}(z)}\Delta_{p+1}(z)-\dots-e^{Q_{k}(z)}\Delta_{k}(z).\nonumber     
\end{align}
Note that $\deg_{z}Q_{l}(z)=\deg_{z}Q_{l+1}(z)=\dots=\deg_{z}Q_{p}(z)>\deg_{z}Q_{p+1}(z)>\dots> \deg_{z}Q_{k}(z).$ In order to use Lemma \ref{L12}, we still need to consider the relationship between $\deg_{z}(Q_{l+1}(z)-Q_{l}(z)),\dots , \deg_{z}(Q_{p}(z)-Q_{l}(z))$ and $\deg_{z}Q_{l}(z).$ 
From the proof of case $2$ we can obtain that 

\begin{align}\label{E761}
c(z)-e^{Q_{p+1}(z)}\Delta_{p+1}(z)-\dots-e^{Q_{k}(z)}\Delta_{k}(z)\equiv 0.
\end{align}
For \eqref{E761}, using the fact that $\deg_{z}Q_{p+1}(z)>\dots>\deg_{z}Q_{k}(z)$ and the proof of case 1,
 we have $c(z)\equiv 0$, a contradiction. 
Hence this case is also not possible. The proof of Theorem \ref{T2} is completed.
\end{proof}

\section{Examples}\label{sec8}
Examples \ref{Ex 1} and \ref{Ex 2} illustrate the fact, shown in Theorem~\ref{T3}, that if all admissible meromorphic solutions of \eqref{E10}  are subnormal, then they can be reduced to Riccati differential equation \eqref{E396}.
\begin{example}\label{Ex 1}
$f(z)=z\tan(\pi z)$ solves \eqref{E10} with $a(z)=1,$ $b(z)=\frac{\pi }{z} ,$ $c(z)=-1,$  $d(z)=\pi z,$ i.e., $f(z)=z\tan(\pi z)$ is an entire   solution of

\begin{align*}
{f}'(z)=f(z+1)+\frac{\pi}{z}f(z)^{2}-f(z)+\pi z.      
\end{align*}
It is easy to verify that $f(z)=z\tan(\pi z)$ also solves Riccati differential equation \eqref{E396} with $\gamma (z)=0,$ i.e.,
\begin{align*}
{f}'(z)=\frac{\pi }{z}f(z)^{2}+\frac{1}{z}f(z)+\pi z.    
\end{align*}
\end{example}

\begin{example}\label{Ex 2}
$f(z)=\frac{1}{1+e^{2\pi iz}}+z$ solves the complex Schrödinger equation with delay, equation \eqref{E10}, with $a(z)=-2\pi i(1+2z), b(z)=2\pi i, c(z)=0, d(z)=2\pi i(z^{2}+3z+1)+1$
and \eqref{E396} with $\gamma =2\pi i ,$ that is to say, $f(z)=\frac{1}{1+e^{2\pi iz}}+z$ solves \begin{align*} 
 {f}'(z)=-2\pi i(1+2z)f(z+1)+2\pi if(z)^{2}+2\pi i(z^{2}+3z+1)+1   
\end{align*}
and \begin{align*} 
{f}'(z)=2\pi if(z)^{2}-2\pi i(1+2z)f(z)+2\pi i(z^{2}+z)+1. 
\end{align*}
\end{example}

Example \ref{Ex 4} shows that \eqref{E61} can reduce to \eqref{E610} when all admissible meromorphic solutions of \eqref{E61} are of subnormal growth.
\begin{example}\label{Ex 4} 
$f(z)=\tan(\pi z)+z $ solves \eqref{E61} with \begin{align*}
 \begin{cases}
a_0(z)=\pi, \\
a_1
(z)=-3\pi z-a(z),\\
a_2(z)=a(z)(z-1)+3\pi z^{2}+\pi +1, \\
a_3(z)=-\pi z^{3}+a(z)z-\pi z-z,\\
a_4(z)=z, \\
a(z)=-\pi 
\end{cases} 
\end{align*} and  \eqref{E610}  with \begin{align*}
\begin{cases}
 L_{1}(z)=2\pi z+a(z),\\
L_2(z)=-\pi,\\ 
 L_3(z)=\pi +1
+\pi z^{2}. \end{cases}    
\end{align*}
\end{example}

Example \ref{Ex 3} shows that the exponential polynomial solution of the form \eqref{E690} does exist in \eqref{E8} in such a way that at least one of the polynomials $Q_{j}(z)  (1\le j\le k)$ reduces to a constant.
\begin{example}\label{Ex 3}
$f(z)=e^{z}+e^{2z}+1$ solves the following equation: 
\begin{align*}
{f}'(z)=\frac{1}{e^{2}-e}f(z+1)+\frac{e^{2}-2e}{e^{2}-e}f(z)+\frac{1-e}{e}.    
\end{align*}
\end{example}

\bmhead*{Acknowledgments}

The first author thanks the support of the National Natural Science Foundation of China (\#11871260) and the third author thanks the support of the China Scholarship Council(\#202306820018).

\section*{Declarations}

\bmhead*{Conflict of interest} The authors have no conflicts of interest to declare that are relevant to the content of this article.



\end{document}